\newtheorem{thm}{Theorem}
\newtheorem{lem}[thm]{Lemma}
\newtheorem{prop}[thm]{Proposition}
\newtheorem{defn}{Definition}
\theoremstyle{definition}
\newtheorem{notation-definition}{Notation-definition}[section]
\newtheorem{exmp}[thm]{Example}
\newtheorem{rem}[thm]{Remark}
\newtheorem*{notation}{Notation}
  \newcommand{\cH}{\mathcal{H}} 
\newcommand{\cJ}{\mathcal{J}}   
  \newcommand{\cO}{\mathcal{O}} 
\newcommand{\fa}{\mathfrak{a}} \newcommand{\fb}{\mathfrak{b}} 
  \newcommand{\fm}{\mathfrak{m}}
 \newcommand{\fp}{\mathfrak{p}} 
 \newcommand{\bbN}{\mathbb{N}}
\newcommand{\ga}{\alpha}
\newcommand{\lrarrow}{\longrightarrow}
  \DeclareMathOperator{\Spec}{Spec}
\title{A formula for jumping numbers in a two-dimensional regular local ring}
\author{Eero Hyry}
\address{
Mathematics and Statistics\\
Faculty of Natural Sciences\\ 
University of Tampere\\
FIN-33014 Tampereen yliopisto\\ 
Finland}
\email{eero.hyry@uta.fi}
\author{Tarmo J\"arvilehto}
\address{
P\"a\"askykuja 5\\
FI-04620 M\"ants\"al\"a\\ 
Finland}
\email{tarmo.jarvilehto@pp.inet.fi}
\date{}
\begin{document}

\begin{abstract}In this article we give an explicit formula for the jumping numbers of an ideal of finite 
colenght in a two-dimensional regular local ring with an algebraically closed residue field. 
For this purpose, we associate a certain numerical semigroup to each vertex of the dual 
graph of a log-resolution of the ideal.
\end{abstract}

\maketitle

\section{Introduction}

Jumping numbers measure the complexity of the singularities of a closed subscheme 
of a variety. They are defined in terms of multiplier ideals of the subscheme. Multiplier 
ideals form a nested sequence of ideals parametrized by rational numbers. The values 
of the parameter where the multiplier ideal changes are called jumping numbers. For a 
simple complete ideal in a local ring of a closed point on a smooth surface an explicit 
formula has been provided by J\"arvilehto in~\cite{J}, which is based on the dissertation ~\cite{Jj}. 
This result applies also to jumping numbers of an analytically irreducible plane curve. 
The purpose of this article is to generalize this formula to any complete ideal.  

Besides~\cite{J}, jumping numbers of simple complete ideals or analytically irreducible 
plane curves have been independently investigated by several people (see~\cite{N}, \cite{T0} and~\cite{GM}). 
In a local ring at a rational singularity of a complex surface, Tucker presented in~\cite{T} 
an algorithm to compute the set of jumping numbers of any ideal. Recently, Alberich-Carrami\~nana, 
Montaner and Dachs-Cadefau gave in~\cite{AM} another algoritm for this purpose. 
But even in dimension two finding a closed formula for the general case has turned out
to be difficult. Kuwata calculated  in~\cite{K} the smallest jumping number, the so called 
log-canonical threshold, for a reduced plane curve with two branches. Galindo, Hernando 
and Monserrat succeeded in~\cite{GHM} to generalize this to any number of branches.  

Jumping numbers are defined by using an embedded resolution of the subscheme. 
They depend on the exceptional divisors appearing in the resolution. We therefore look at 
the dual graph of the resolution. Recall that the vertices of the dual graph correspond to 
exceptional divisors and two vertices are connected by an edge if the exceptional divisors 
in question intersect. To each vertex, we will attach a certain semigroup. We will then 
describe jumping numbers in terms of these semigroups. In defining the semigroups we 
use Zariski exponents of the valuations associated to the exceptional divisors. 

To explain this in more detail, let $\fa$ be a complete ideal of finite colength in a two-dimensional 
regular local ring $R$ having an algebraically closed residue field. Let $X\lrarrow \Spec R$ 
be a log resolution of the pair $(R,\fa)$. Let $E_1,\ldots,E_N$ be the exceptional divisors. 
Let $\Gamma$ be the dual graph of $X$. Two vertices $\gamma$ and $\eta$ are called adjacent, denoted by 
$\gamma\sim\eta$, if  the corresponding exceptional divisors $E_\gamma$ and $E_\eta$ intersect. The valence $v_\Gamma(\mu)$ 
of a vertex $\mu$ means the number of vertices adjacent to it.  A vertex with valence at most 
one is called an end whereas a vertex of valence at least three is a star. Let $v_1,\ldots,v_N$ 
be the discrete valuations and $\fp_1,\ldots,\fp_N$ the simple ideals corresponding to $E_1,\ldots,E_N$, 
respectively. Set $V_{\mu,\nu}=v_{\mu}(\fp_\nu)$ for all $\mu,\nu=1,\ldots,N$. The Zariski exponents are the 
numbers $V_{\mu,\tau}$, where $\tau$ is end. Let $S^\mu$ denote the submonoid of $\mathbb N$ generated by $V_{\mu,\mu}$ and 
the numbers 
$$
s^\mu_\nu:=\gcd\left\{V_{\mu,\tau}\mid v_\Gamma(\tau)=1
	\text{ and }
\tau\in\Gamma_\nu^\mu\right\},
$$
where $\Gamma^\mu_\nu$ is the branch emanating from $\mu$ towards $\nu$, i.~e., the maximal connected subgraph 
of $\Gamma$ containing $\nu$ but not $\mu$. We will show that $S^\mu$ is a numerical semigroup 
generated by at most two elements (see Remark \ref{Sgen2}).

Recall that a divisor $F=f_1E_1+\ldots +f_NE_N$ on $X$ is called antinef if $F \cdot E_\gamma \le 0$ 
for all $\gamma=1,\ldots,N$, where $F \cdot E_\gamma$ is the intersection product. Let $\{\widehat E_1,\dots,\widehat E_N\}$, where 
$E_\mu\cdot\widehat E_\nu=-\delta_{\mu,\nu}$, denote the dual basis of $\{E_1,\dots,E_N\}$. Then $F=\widehat f_1\widehat E_1+\ldots+\widehat f_N\widehat E_N$
is antinef if and only if $\widehat f_i\ge 0$ for all $i=1,\ldots,N$.  We call the numbers $\widehat f_1,\ldots, \widehat f_N$ as the 
factors of $F$.

We make use of the observation made in~\cite{J} that jumping numbers of $\fa$ can be 
parametrized by the antinef divisors. More precisely, the jumping number corresponding 
to an antinef divisor $F$ is 
$$
\xi_F:=\min_\gamma \frac{f_\gamma+k_\gamma+1}{d_\gamma},
$$ 
where $D=d_1E_1+\ldots+ d_NE_N$ is the divisor on $X$ such that $\cO_X(-D)=\fa \cO_X$, and 
$K=k_1E_1+\ldots+ k_NE_N$ denotes the canonical divisor. We say that $\xi$ is a jumping 
number supported at a vertex $\mu$ if $\xi=\xi_F$ for some antinef divisor $F$ with 
$$
\xi_F =\frac{f_\mu+k_\mu+1}{d_\mu}.
$$ 
We fix a vertex $\mu$ and concentrate on the set $\mathcal H_\mu^\fa$ of jumping numbers supported at $\mu$. 

Our main result, Theorem~\ref{2}, yields a formula for the set of the jumping numbers of 
$\fa$ supported at $\mu$:
$$
\cH_\mu^\fa
=\left\{\frac{t}{d_\mu}\middle | t+(v_{\Gamma}(\mu)-2)V_{\mu,\mu}
	-\sum_{\nu\sim\mu}s^\mu_\nu\left\lceil t\sum_{i\in\Gamma_\nu^\mu}\frac{\widehat d_i V_{\mu,i}}{s^\mu_\nu d_\mu}\right\rceil^{+}\in S^\mu\right\},
$$
where $\left\lceil\:\:\right\rceil^{+}$ means rounding up to the nearest positive integer. Note that a jumping 
number is always supported at a vertex which is either a star or corresponds to a simple 
factor of the ideal (see Lemma~\ref{3d}). 

In the proof we look at the factors of divisors. Given a vertex $\mu$ we introduce two 
transforms of divisors by means of which it is possible 'bring' factors from each branch 
emanating from $\mu$ to the closest vertex adjacent to $\mu$ and 'distribute' a part of a factor 
from $\mu$ to the adjacent vertices. Suppose that $\xi=\xi_F$ is supported at a $\mu$. Using these 
transformations we can modify either $F$ or $D$ or both in such a way that we still have 
$\xi=\xi_F$. In particular, we can assume that the divisor $D$ has factors only at the vertices 
adjacent to $\mu$. In this process the properties of the mappings $\rho_{[\mu,\gamma]}\colon \Gamma\to\mathbb Q$, where $\mu$ and 
$\gamma$ are fixed vertices, and 
$$
\nu\mapsto\frac{V_{\gamma,\nu}}{V_{\mu,\nu}},
$$ 
play a crucial role. In particular, we prove in Lemma~\ref{V<V} that $\rho_{[\mu,\gamma]}$ is strictly increasing 
along the path going from $\mu$ to $\gamma$, and stays constant on any path going away from this 
path. Finally, we  show in Example~\ref{generalexample} how our formula works in practice.

\section{Preliminaries}

In this paper, we make use of the Zariski-Lipman theory of complete ideals. The 
general setting here is similar to that discussed in our paper \cite{HJ}. For the reader's convenience, 
we collect here some basic concepts and notation. More details can be found 
in~\cite{L}, \cite{C}, \cite{LW} and~\cite{J}.

\subsubsection*{About Zariski-Lipman theory}

Let $\fa$ be a complete ideal of finite colength in a two-dimensional regular local ring $R$ 
having an algebraically closed residue field. Let $\pi \colon X\rightarrow\Spec(R)$ be a principalization 
of $\fa$. Then $X$ is a regular scheme and $\fa\cO_X=\cO_X(-D)$ for an effective Cartier divisor 
$D$. The morphism $\pi$ is a composition of point blowups of regular schemes 
\begin{equation*}
\pi:X=X_{N+1}\xrightarrow{\pi_N}\cdots\xrightarrow{\pi_2} X_2\xrightarrow{\pi_1}X_1=\Spec R,
\end{equation*} 
where $\pi_\mu$ is the blowup of $X_\mu$ at a closed point $x_\mu\in X_\mu$. Let $E_\mu$ be the strict and $E^*_\mu$ the 
total transform of the exceptional divisor $\pi^{-1}_\mu\{x_\mu\}$ on $X$. We write $v_\mu$ for the discrete 
valuation associated to the discrete valuation ring $\cO_{X,E_\mu}$, so that $v_\mu$ is the $\fm_{X_\mu,x_\mu}$-adic 
order valuation.

A point $x_\mu$ is \textit{infinitely near} to a point $x_\nu$, if the projection $X_\mu \rightarrow X_\nu$ maps $x_\mu$ to 
$x_\nu$. Further, $x_\mu$ is \textit{proximate} to $x_\nu$, denoted by $\mu\succ\nu$, if $x_\mu$ lies on the strict transform 
of $\pi_\nu^{-1}\{x_\nu\}$ on $X_\mu.$ Note that a point can be proximate to at most two points. The 
\textit{proximity matrix} is
\begin{equation*}\label{prox}
P:=(p_{\mu,\nu})_{N\times N},\text{ where }p_{\mu,\nu}=
\left\lbrace
\begin{array}{rl}
   1,&\text{if }\mu=\nu;\\
  -1,&\text{if }\mu\succ \nu;\\
   0,&\text{otherwise.}
\end{array}
\right.
\end{equation*}
We write $Q=(q_{\mu,\nu})_{N\times N}:=P^{-1}$, so that $PQ=1$. 

Besides the obvious one, the lattice $\Lambda:= \mathbb Z E_1 \oplus  \ldots \oplus \mathbb Z E_N$ of exceptional divisors 
on $X$
has two other convenient bases, namely $\{E_1^*,\dots,E_N^*\}$ and $\{\widehat E_1,\dots,\widehat E_N\}$, where  
$E_\mu\cdot\widehat E_\nu=E_\mu^*\cdot E_\nu^*=-\delta_{\mu,\nu}$. Throughout this paper we use the practice that if an 
upper case letter, say $G$, denotes a divisor $G\in\Lambda$, then the corresponding lower case 
letter possibly with an accent mark denotes the coefficient vector with respect to the 
appropriate base. In particular, writing
$$
G=g_1E_1+\ldots+g_NE_N=g_1^*E^*_1+\ldots+g_N^*E^*_N=\widehat g_1\widehat E_1+\ldots+\widehat g_N\widehat E_N
$$
with $g=(g_\nu), g^*=(g^*_\nu)$ and $\widehat g=(\widehat g_\nu)$, we get the following base change formulas:
\begin{equation}\label{BC}
g^*=gP^\textsc{t}\text{ and }\widehat g=gP^\textsc{t}P=g^*P.
\end{equation}
In many cases, we regard $\Lambda$ as a subset of $\Lambda_\mathbb Q := \mathbb Q\otimes\Lambda$. We call the vector $\widehat g$ the 
\emph{factorization vector} and $g$ the \emph{valuation vector} of the divisor. Note that $g=\widehat gV$, where 
$V:=(P^\textsc{t}P)^{-1}$ is called the \emph{valuation matrix}. Set
$$
w_\Gamma(\mu):=-E_\mu^2=1+\#\{\nu\mid \nu \succ \mu\}.
$$
We then get the formulas
\begin{equation}\label{P}
\widehat g_\mu=g^*_\mu-\sum_{\nu\succ \mu}g^*_\nu=w_\Gamma(\mu)g_\mu-\sum_{\nu\sim\mu}g_\nu\quad(\mu=1,\ldots,N).
\end{equation}
Especially, this yields
\begin{equation}\label{PW}
w_\Gamma(\eta)V_{\mu,\eta}=\sum_{i\sim\eta}V_{\mu,i}+\delta_{\mu,\eta}.
\end{equation}

Recall that a divisor $F\in \Lambda$ is \textit{antinef} if $\widehat f_\nu=-F \cdot E_\nu \ge 0$ for all $\nu=1,\ldots,N$.
Equivalently, the \textit{proximity inequalities}
\begin{equation*}
f^*_\mu\ge\sum_{\nu\succ\mu}f^*_\nu\quad(\mu=1,\ldots,N)
\end{equation*}
hold. Note that they can also be expressed in the form
\begin{equation*}
w_\Gamma(\mu)f_\mu\ge\sum_{\nu\sim\mu}f_\nu\quad(\mu=1,\ldots,N).
\end{equation*}
In fact, if $F\not=0$ is antinef, then also $f_\nu>0$ for all $\nu=1,\ldots,N$. There is a one to 
one correspondence between the antinef divisors in $\Lambda$ and the complete ideals of finite 
colength in $R$ generating invertible $\cO_X$-sheaves, given by $F\leftrightarrow\Gamma(X,\cO_X(-F))$. 

An ideal is called \textit{simple} if it cannot be expressed as a product of two proper ideals. 
By the famous result of Zariski, every complete ideal factorizes uniquely into a product 
of simple complete ideals. More precisely, we can present a complete ideal $\fa$ as a product
$$
\fa=\fp_1^{\widehat d_1}\cdots\fp_N^{\widehat d_N},
$$ 
where $\fp_\mu\subset R$ denotes the simple complete ideal of finite colength corresponding to the 
exceptional divisor $\widehat E_\mu$ and $\widehat d_i\in\mathbb N$ for every $i$. By~\eqref{BC}
\begin{equation*}
\widehat E_\mu=\sum_\nu q_{\mu,\nu}E^*_\nu=\sum_{\nu,\rho}q_{\nu,\rho}q_{\mu,\rho}E_\nu.
\end{equation*} 
In particular, we observe the \textit{reciprocity formula} 
\begin{equation*}
v_\nu(\fp_\mu)=\sum_{\rho}q_{\nu,\rho}q_{\mu,\rho}=v_\mu(\fp_\nu)\quad(\mu,\nu=1,\ldots,N),
\end{equation*} 
in short, $V=V^\textsc t$. 
Recall that the \textit{canonical divisor} is $K=\sum_\nu E^*_\nu$. If $k =(k_\nu)$ and 
$\widehat k =(\widehat k_\nu)$ are the appropriate coefficient vectors, we have
$$
kE=\widehat k\widehat E=K.
$$
The formulas \eqref{BC} yield 
\begin{equation}\label{K}
k_\nu=\sum_\mu q_{\nu,\mu}
	\quad\hbox{and}\quad
\widehat k_\nu=E_\nu^2+2\quad(\nu=1,\ldots,N).
\end{equation} 

\subsubsection*{Dual graph}

The \textit{dual graph} $\Gamma$ associated to our principalization is a tree, where the vertices 
correspond one to one to the exceptional divisors and an edge between two adjacent
vertices, $\gamma\sim\eta$, means that the corresponding exceptional divisors  $E_\gamma$ and $E_\eta$ intersect. 
A vertex $\gamma$ corresponding to the exceptional divisor $E_\gamma$ is weighted by the number $w_\Gamma(\gamma)$.
We say that a vertex $\gamma$ is proximate to another vertex $\eta$ if $p_{\gamma,\eta}=-1$. It is \textit{free} if it is 
proximate to at most one vertex. We may also say that $\gamma$ is infinitely near to $\eta$, and 
write $\eta\subset\gamma$, if this is the case with the corresponding points. The \textit{root} of $\Gamma$ is the vertex 
$\tau_0$ for which $\tau_0\subset\gamma$ for every $\gamma\in\Gamma$.

Blowing up a point on $E_\gamma$ expands the dual graph by adding a vertex $\nu$ corresponding 
to the exceptional divisor of the blowup. The weight of the new vertex is one and 
the weights of the adjacent vertices are increased by one. In \cite[Definition 5.1]{S1} such 
expansions are called elementary modifications. There are two kinds of elementary modifications. 
If $E_\gamma$ is the only exceptional divisor containing the center of blowup so that 
$\gamma\sim\nu$ forms the only new edge, then the elementary modification is of the first kind:
\newline
\begin{picture}(400,75)(-40,-25)
\put(35,0){\circle*{5}}
\put(35,15){\makebox(0,0)[cc]{\small$w_\gamma$}}
\put(35,-14){\makebox(0,0)[cc]{\small$\gamma$}}
\put(35,0){\line(-2,1){10}}
\put(35,0){\line(-2,-1){10}}
\put(30,0){\line(-1,0){5}}
\put(35,0){\line(2,1){10}}
\put(35,0){\line(2,-1){10}}
\put(40,0){\line(1,0){5}}
\put(235,0){\line(-2,1){10}}
\put(235,0){\line(-2,-1){10}}
\put(230,0){\line(-1,0){5}}
\put(235,0){\line(2,1){10}}
\put(235,0){\line(2,-1){10}}
\put(240,0){\line(1,0){5}}
\put(130,-5){\large$\leadsto$}
\put(235,0){\circle*{5}}
\put(225,15){\makebox(0,0)[cc]{\small$w_\gamma+1$}}
\put(236,-14){\makebox(0,0)[cc]{\small$\gamma$}}
\put(235,0){\line(1,1){27}}
\put(262,27){\circle*{5}}
\put(265,16){\makebox(0,0)[cc]{\small$\nu$}}
\put(260,38){\makebox(0,0)[cc]{\small$1$}}
\end{picture}

If the center of blowup is the intersection point of $E_\gamma$ and another exceptional divisor, 
say $E_\eta$, then the edge $\gamma\sim\eta$ is replaced by the edges $\gamma\sim\nu$ and $\nu\sim\eta$, and the elementary 
modification is of the second kind:
\newline
\begin{picture}(400,75)(-40,-25)
\put(0,0){\circle*{5}}
\put(0,15){\makebox(0,0)[cc]{\small$w_\gamma$}}
\put(0,-14){\makebox(0,0)[cc]{\small$\gamma$}}
\put(0,0){\line(1,0){70}}
\put(0,0){\line(-2,1){10}}
\put(0,0){\line(-2,-1){10}}
\put(-5,0){\line(-1,0){5}}
\put(70,0){\line(2,1){10}}
\put(70,0){\line(2,-1){10}}
\put(75,0){\line(1,0){5}}
\put(210,0){\line(-2,1){10}}
\put(210,0){\line(-2,-1){10}}
\put(205,0){\line(-1,0){5}}
\put(310,0){\line(2,1){10}}
\put(310,0){\line(2,-1){10}}
\put(315,0){\line(1,0){5}}
\put(70,0){\circle*{5}}
\put(70,15){\makebox(0,0)[cc]{\small$w_\eta$}}
\put(70,-14){\makebox(0,0)[cc]{\small$\eta$}}
\put(130,-5){\large$\leadsto$}
\put(210,0){\circle*{5}}
\put(205,15){\makebox(0,0)[cc]{\small$w_\gamma+1$}}
\put(210,-14){\makebox(0,0)[cc]{\small$\gamma$}}
\put(210,0){\line(2,1){50}}
\put(310,0){\line(-2,1){50}}
\put(260,24.5){\circle*{5}}
\put(260,13){\makebox(0,0)[cc]{\small$\nu$}}
\put(260,36){\makebox(0,0)[cc]{\small$1$}}
\put(310,0){\circle*{5}}
\put(315,15){\makebox(0,0)[cc]{\small$w_\eta+1$}}
\put(310,-14){\makebox(0,0)[cc]{\small$\eta$}}
\end{picture}

Let us write 
$$
\Gamma(\nu,U)\text{ where }U=\{\gamma\in\Gamma\mid\gamma\prec\nu\}
$$
for an elementary modification of the graph $\Gamma$ by adding a vertex $\nu$ adjacent to vertices 
$\gamma\in U$. Note that $U$ consists of at most two vertices. Note also that if the graph is empty 
then the elementary modification is defined to be of the first kind containing only the 
root vertex. Following \cite[Definition 5.2]{S1}, a dual graph dominates a dual graph $\Gamma$, if it 
can be obtained from $\Gamma$ by a sequence of elementary modifications. Obviously, a sequence 
of point blowups correspond to a sequence of elementary modifications. Especially, the 
dual graph of our principalization can be obtained from  the graph containing only the 
root vertex through successive elementary modifications (c.f. \cite[Remark 5.5]{S1}).

In a way, the matrix $P^\textsc {t}P$ represents the dual graph because the diagonal elements 
$(P^\textsc{t}P)_{\nu,\nu}=-E_\nu^2$ correspond with the weights of the vertices while outside the diagonal 
the element $(P^\textsc{t}P)_{\mu,\nu}=-E_\mu\cdot E_\nu$ is $-1$ if $E_\mu$ and $E_\nu$ intersect and otherwise zero. 

The \textit{valence} $v_\Gamma(\mu)$ of a vertex $\mu$ means the number of vertices adjacent to it. If 
$v_\Gamma(\mu)\ge 3$, then $\mu$ is called a \textit{star}. If $v_\Gamma(\mu)\le1$, then we call it an \textit{end}. The vertices 
adjacent to $\mu$ correspond one to one to the branches emanating from $\mu$, which can be 
defined as follows:

\begin{defn} 
For any two vertices $\mu$ and $\nu$ in $\Gamma$, let $\Gamma^\mu_\nu$ denote the maximal connected 
subgraph of $\Gamma$ containing $\nu$ but not $\mu$ $(\Gamma_\mu^\mu=\emptyset)$. We say $\Gamma^\mu_\nu$ is a branch emanating from 
$\mu$ towards $\nu$. A branch $\Gamma^\mu_\nu$ is anterior to $\mu$, if $\mu$ is infinitely near to some of its vertices. 
Otherwise we say it is posterior to $\mu$.
\end{defn}

Observe that every branch emanating from $\mu$ is either anterior or posterior to $\mu$, and 
for those we immediately get the following result:

\begin{prop} 
The unempty posterior branches of $\mu$ correspond one to one to the free 
vertices, which are proximate to $\mu$, whereas the anterior branches are in one to one 
correspondence with the vertices to which $\mu$ is proximate to. 
\end{prop}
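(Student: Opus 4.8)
The plan is to argue by induction on the number of elementary modifications needed to build $\Gamma$ from the one–vertex graph $\{\tau_0\}$ (equivalently, on the number $N$ of vertices). The base case $\Gamma=\{\tau_0\}$ is empty of content: $\tau_0$ has no branches, it is proximate to no vertex, and no vertex is proximate to $\tau_0$. For the inductive step I would write $\Gamma=\Gamma'(\nu,U)$, where $\Gamma'$ is a dual graph with one fewer vertex, $\nu$ is the newly added vertex, and $U=\{\gamma\mid\gamma\prec\nu\}$ is the set of at most two vertices $\nu$ is proximate to. Before splitting into cases I would record two facts used throughout. First, passing from $\Gamma'$ to $\Gamma$ does not change the proximity relations among the old vertices, hence does not change which old vertices are free, nor which vertices a fixed old vertex is proximate to. Second, no old vertex $\mu$ is infinitely near $\nu$ (since $\nu$ is created last), so adjoining $\nu$ to a branch of $\mu$ can never convert a posterior branch into an anterior one, or conversely. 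It is also convenient to phrase the inductive statement in the sharp form that each anterior branch of $\mu$ contains \emph{exactly one} vertex to which $\mu$ is proximate, and each nonempty posterior branch contains \emph{exactly one} free vertex proximate to $\mu$; this is the form that will be transported.

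Fix a vertex $\mu$ of $\Gamma$. If $\mu=\nu$, then every branch emanating from $\mu$ is towards a vertex of $U$, and since $\nu\succ\gamma$ gives $\gamma\subset\nu$ for each $\gamma\in U$, all these branches are anterior and obviously correspond bijectively to $U$; there are no posterior branches and no free vertices proximate to $\nu$, so the statement holds. If $\mu$ is an old vertex with $\mu\notin U$, then — whether the modification is of the first or the second kind — the vertices of $U$ all lie in a single branch of $\mu$ and the adjacencies of $\mu$ are unaffected, so one branch of $\mu$ merely acquires $\nu$ as an additional vertex and all others are unchanged. By the two observations above the anterior/posterior type of every branch, the set of vertices $\mu$ is proximate to, and the set of free vertices proximate to $\mu$ are all exactly as in $\Gamma'$, and the inductive hypothesis applied to $\mu$ in $\Gamma'$ gives the two bijections.

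The remaining case is $\mu\in U$, say $\mu=\gamma$, so $\nu\succ\mu$. If the modification is of the first kind, then $U=\{\mu\}$, the vertex $\nu$ is free and proximate only to $\mu$, and $\Gamma$ arises from $\Gamma'$ by attaching $\nu$ to $\mu$ as a new leaf. Thus $\mu$ gains one new branch $\Gamma^\mu_\nu=\{\nu\}$, which is posterior because $\mu$ is not infinitely near $\nu$, while on the proximity side it gains the single new free vertex $\nu$ proximate to $\mu$; everything else is unchanged, so the posterior correspondence grows by the one pair $(\{\nu\},\nu)$ and the anterior correspondence is untouched. If the modification is of the second kind, then $U=\{\mu,\eta\}$ with $\mu\sim\eta$ in $\Gamma'$, the vertex $\nu$ is proximate to the two vertices $\mu$ and $\eta$ and so is not free, and in $\Gamma$ the edge $\mu\sim\eta$ is replaced by $\mu\sim\nu$ and $\nu\sim\eta$. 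Writing $B$ for the branch of $\mu$ towards $\eta$ in $\Gamma'$, the branch of $\mu$ towards $\nu$ in $\Gamma$ is $B\cup\{\nu\}$ and no other branch of $\mu$ is affected. Since $\mu$ is not infinitely near $\nu$, this branch keeps the anterior/posterior type of $B$; since $\nu$ is not free it contributes no new free vertex proximate to $\mu$; and the set of vertices $\mu$ is proximate to and the set of free vertices proximate to $\mu$ are unchanged. Hence the inductive correspondence for $B$ transports verbatim to $B\cup\{\nu\}$, and both bijections persist.

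The step I expect to be the main obstacle is this last second-kind case together with the precise formulation of the inductive statement: one must be sure that rerouting the branch $B$ through the \emph{non-free} vertex $\nu$, while $\nu$ itself becomes newly proximate to $\mu$, disturbs neither correspondence — in particular, the free vertex proximate to $\mu$ associated to a posterior branch need not be the neighbour of $\mu$ in that branch, so the statement has to be carried as ``unique free vertex proximate to $\mu$ inside the branch'' rather than ``the adjacent vertex''. Once the bookkeeping is carried out in all four configurations ($\mu=\nu$; $\mu$ old with $\mu\notin U$; $\mu$ old with $\mu\in U$ and a first-kind or a second-kind modification) as above, the proposition follows by induction.
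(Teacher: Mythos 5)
Your argument is correct and follows essentially the same route as the paper: induction on the number of elementary modifications, with the case split $\mu=\nu$ versus $\mu$ old, and then (for $\mu$ old) according to whether the new vertex is a free vertex proximate to $\mu$. The paper merely bundles your two subcases ``$\mu\notin U$'' and ``$\mu\in U$, second kind'' under the single hypothesis ``$\eta$ is not a free vertex proximate to $\mu$,'' and leaves implicit the sharpened inductive statement you made explicit.
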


\begin{proof} 
The claim is trivial if $\mu$ is the only vertex, meaning that there are no unempty 
branches. We shall proceed by induction on the number of vertices. Suppose $\Gamma=\Gamma'(\eta,U)$ 
and the claim holds for $\Gamma'$. Observe that for any $\mu\in\Gamma$, $\mu$ is not proximate to $\eta$.

If $\mu=\eta$, then $\gamma\prec\mu$ exactly when $\gamma\sim\mu$, so that $\mu$ is infinitely near to any adjacent 
vertex. Obviously, the branches $\Gamma^\mu_\gamma$ correspond one to one to the vertices $\gamma\sim\mu$. Thus 
the claim is clear in this case.

Suppose that $\mu\neq\eta$. If $\eta$ is not a free vertex proximate to $\mu$, then the blowup just 
augments an existing branch of $\Gamma'$, i.~e., the branches of $\Gamma$ emanating from $\mu$ correspond
one to one to those of $\Gamma'$. Because the proximity relations are preserved under blowup, 
the claim follows. If $\eta$ is a free blowup of $\mu$, then $\mu\prec\eta$ and $\Gamma_\eta^\mu=\{\eta\}$ forms a new 
branch, which corresponds to the vertex $\eta$. For the rest of the branches emanating from 
$\mu$ the correspondence is inherited from $\Gamma'$.
\end{proof} 

Recall that a vertex is proximate to at most two vertices. Subsequently, there are at 
most two branches anterior to $\mu \in \Gamma$ depending on whether $\mu$ is free or not.

The \textit{distance} between two vertices $\mu, \nu\in \Gamma$ is defined as the length of the \textit{path} $[\nu,\mu]$, 
i.~e.,
$$
d(\nu,\mu):=\min\{r\mid\nu=\nu_0\sim\cdots\sim\nu_r=\mu,
	\text{ where }
\nu_0,\dots,\nu_r\in\Gamma\},
$$
Furthermore, if $T\subset \Gamma$, we set 
$$
d(\nu,T):=\min\{d(\nu,\mu)\mid\mu\in T\}.
$$ 
If $d(\nu,T)=1$, then we write $\nu\sim T$.

\begin{defn}\label{pair}
A pair $(\gamma,\tau)$ is associated to $\mu$, if $\gamma$ and $\tau$ satisfy the following three 
conditions:
\begin{itemize}
	\item[i)] $\gamma\subset\tau\subset\mu$, i.~e., $\mu$ is infinitely near to $\tau$ which is infinitely near to $\gamma$;
	\item[ii)] $\tau$ is free and infinitely near to every free vertex $\nu\subset\mu$;
	\item[iii)] $\gamma$ is not free and infinitely near to every non free vertex $\nu\subset\tau$, unless every $\nu\subset\tau$ 
	            is free in which case $\gamma=\tau_0$ is the root.
\end{itemize}
The sequence of pairs $((\gamma_i,\tau_{i+1}))_{i=0}^{g}$ is associated to $\mu:=\gamma_{g+1}$, if it holds for $i=0,\dots,g$
that $(\gamma_i,\tau_{i+1})$ is the pair associated to $\gamma_{i+1}$.
\end{defn}

\begin{rem}\label{rem2}
Let $\Gamma$ be the dual graph of $\mu$, i.~e., the simple dual graph which consists 
of all the vertices to which $\mu$ is infinitely near to. Observe that we may always reach 
this situation by repeatedly blowing down any vertex different from $\mu$ having a weight one. If 
$((\gamma_i,\tau_{i+1}))_{i=0}^{g}$ is now the sequence associated to $\mu$, then $\gamma_0=\tau_0$ is the root, $\tau_0,\dots,\tau_{g+1}$ 
are exactly the end vertices of $\Gamma$ while $\gamma_1,\dots,\gamma_g$ are its stars (cf. \cite[Proposition 4.3]{J}).
Note that the integer $g$, i.~e., the number of star vertices of the dual graph, is denoted 
by $g^*$ in \cite[Notation 3.3]{J}.
\end{rem}

\begin{rem}\label{rem2kuva}
As the relation $\nu\subset\mu$ induces a partial order on $\Gamma$, we might give the 
definition as follows: a pair $(\gamma,\tau)$ is associated to $\mu$, if $\tau$ is maximal among the free 
points to which $\mu$ is infinitely near to, and $\gamma$ is maximal among the non free points to 
which $\tau$ is infinitely near to. The graph below illustrates an example of a sequence of 
pairs associated to a vertex.

\begin{picture}(350,154)(-37,-104)
\put(0,0){\circle{5}}
\put(0,0){\circle*{2}}
\put(50,0){\circle*{5}}
\put(50,-40){\circle{5}}
\put(50,-40){\circle*{2}}
\put(100,0){\circle*{5}}
\put(150,-40){\circle*{5}}
\put(150,-80){\circle{5}}
\put(150,-80){\circle*{2}}
\put(150,0){\circle*{5}}
\put(200,0){\circle{5}}
\put(250,0){\circle{5}}
\put(200,0){\circle*{2}}
\put(250,0){\circle*{2}}
\put(275,-20){\circle{5}}
\put(275,-20){\circle*{2}}
\put(75,-60){\circle{5}}
\put(75,-60){\circle*{2}}
\put(175,-60){\circle*{5}}
\put(75,20){\circle*{5}}
\put(225,20){\circle*{5}}
\thicklines
\put(2.5,0){\line(1,0){47.5}}
\multiput(54,0)(5,0){9}{\line(1,0){2}}
\put(100,0){\line(1,0){50}}
\put(150,0){\line(1,0){47.5}}
\multiput(204,0)(5,0){9}{\line(1,0){2}}
\put(50,0){\line(0,-1){37.5}}
\put(150,0){\line(0,-1){37.5}}
\multiput(150,-44.5)(0,-4){8}{\line(0,-1){2}}
\thinlines
\put(51.9,-41.7){\line(5,-4){21.1}}
\put(51.9,1.7){\line(5,4){21.1}}
\put(98.1,1.7){\line(-5,4){21.1}}
\put(201.9,1.7){\line(5,4){21.1}}
\put(248.1,1.7){\line(-5,4){21.1}}
\put(151.9,-41.7){\line(5,-4){21.1}}
\put(151.9,-78,3){\line(5,4){21.1}}
\put(251.9,-1.7){\line(5,-4){21.1}}
\put(-15,11){\makebox(0,0)[cc]{\small$\eta_1=\gamma_0=\tau_0$}}
\put(29,-45){\makebox(0,0)[cc]{\small$\eta_2=\tau_1$}}
\put(40,11){\makebox(0,0)[cc]{\small$\eta_3=\gamma_1$}}
\put(150,-90){\makebox(0,0)[cc]{\small$\eta_4=\tau_2$}}
\put(140,-45){\makebox(0,0)[cc]{\small$\eta_5$}}
\put(104,11){\makebox(0,0)[cc]{\small$\eta_6$}}
\put(150,11){\makebox(0,0)[cc]{\small$\eta_7=\gamma_2$}}
\put(197,11){\makebox(0,0)[cc]{\small$\eta_8$}}
\put(282,11){\makebox(0,0)[cc]{\small$\mu=\eta_9=\gamma_3=\tau_3$}}
\put(75,-70){\makebox(0,0)[cc]{\small$\eta_{11}$}}
\put(75,33){\makebox(0,0)[cc]{\small$\eta_{13}$}}
\put(187,-60){\makebox(0,0)[cc]{\small$\eta_{12}$}}
\put(225,33){\makebox(0,0)[cc]{\small$\eta_{14}$}}
\put(285,-11){\makebox(0,0)[cc]{\small$\eta_{10}$}}
{\color{lightgray}
\thinlines\thinlines
\put(33,-13){\vector(4,3){10}}
\qbezier(8,-3)(35,-4)(45,-35)
\begin{rotate}{-40}
\put(32,4.5){\makebox(0,0)[cc]{\tiny$(\gamma_0,\tau_1)$}}
\end{rotate}
\put(113,-27){\vector(4,3){15}}
\thinlines
\qbezier(58,-3)(120,-10)(145,-75)
\begin{rotate}{-42}
\put(101,50){\makebox(0,0)[cc]{\tiny$(\gamma_1,\tau_2)$}}
\end{rotate}
\put(208,-14){\vector(3,1){15}}
\thinlines
\qbezier(158,-5)(230,-25)(246,-5)
\begin{rotate}{-10}
\put(196,16){\makebox(0,0)[cc]{\tiny$(\gamma_2,\tau_3)$}}
\end{rotate}
}
\end{picture}

\noindent 
Here the open circles represent free points. We now have $\eta_1 \subset \cdots \subset \eta_{10}$. Moreover, 
$\eta_2\subset \eta_{11}$, $\eta_5\subset \eta_{12}$, $\eta_6\subset \eta_{13}$ and $\eta_9\subset \eta_{14}$. Since we are interested in the vertices 
to which $\mu$ is infinitely near to, we may concentrate on the chain $\eta_1 \subset \cdots \subset \eta_9$ or, in 
the dual graph, blow down the vertices $\eta_i$ with $i > 9$. The dashed lines in the graph 
represent the edges emerging when blowing down. Obviously, the maximal free point to 
which $\mu=\eta_9$ is infinitely near to is $\mu$ itself, and further, the maximal non free point to 
which $\mu$ is infinitely near to is $\eta_7$. Thus the pair $(\eta_7,\mu)$ is associated to $\mu$. Similarly, the 
pair $(\eta_3,\eta_4)$ is associated to $\eta_7$ and $(\eta_1,\eta_2)$ is associated to $\eta_3$. 
\end{rem}

\subsubsection*{Jumping numbers}

We will next recall the definition of jumping numbers. A general reference for jumping 
numbers is the fundamental article~\cite{ELSV}. For a nonnegative rational number $\xi$, the 
\textit{multiplier ideal} $\cJ(\fa^\xi)$ is defined to be the ideal 
$$
\cJ(\fa^\xi):=\Gamma\left(X,\cO_X\left(K-\left\lfloor\xi D\right\rfloor\right)\right)\subset R,
$$
where $D = d_1E_1 +\cdots+ d_NE_N$ is the divisor corresponding to $\fa$ and $\left\lfloor \xi D\right\rfloor$ denotes the 
integer part of $\xi D$. It is now known that there is an increasing discrete sequence 
$$
0=\xi_0<\xi_1<\xi_2<\cdots
$$ 
of rational numbers $\xi_i$ characterized by the properties that $\cJ(\fa^\xi)=\cJ(\fa^{\xi_i})$ for $\xi\in[\xi_i,\xi_{i+1})$, 
while $\cJ(\fa^{\xi_{i+1}})\subsetneq\cJ(\fa^{\xi_i})$ for every $i$. The numbers $\xi_1,\xi_2,\dots$, are called the 
\textit{jumping numbers} of $\fa$. The following Proposition~\ref{parametrisointi}, which is fundamental for the rest of 
this article, results from~\cite[Proposition 6.7 and Proposition 7.2]{J}.

\begin{prop}\label{parametrisointi}
Let $\fa \subset R$ be a complete ideal of finite colength. Then $\xi$ is a jumping 
number of $\fa$ if and only if there exists an antinef divisor $F=fE\in \Lambda$ such that 
$$
\xi=\xi_F:=\min_\nu \frac{f_\nu+k_\nu+1}{d_\nu}.
$$ 
Moreover, if $\fb$ is the complete ideal corresponding to $F$, then 
$$
\xi=\inf\{c\in\mathbb Q_{>0}\mid\cJ(\fa^{c})\nsupseteq\fb\}.
$$
\end{prop}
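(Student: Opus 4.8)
The plan is to establish the following two-sided reformulation, from which the Proposition is immediate: for every antinef divisor $F=fE\in\Lambda$, with associated complete ideal $\fb=\Gamma(X,\cO_X(-F))$, one has
$$
\xi_F=\inf\{c\in\bbQ_{>0}\mid\cJ(\fa^c)\nsupseteq\fb\},
$$
and moreover $\xi_F$ is a jumping number of $\fa$; conversely, every jumping number arises as $\xi_F$ for some antinef $F\in\Lambda$. Granting this, the ``if'' direction and the ``moreover'' clause are exactly the above. For the ``only if'' direction, given a jumping number $\xi=\xi_i$ I would pick $\xi'\in[\xi_{i-1},\xi_i)$ and set $\fb:=\cJ(\fa^{\xi'})=\Gamma\bigl(X,\cO_X(-(\lfloor\xi'D\rfloor-K))\bigr)$; its antinef closure $F$ (see below) lies in $\Lambda$, is antinef, and satisfies $\fb=\Gamma(X,\cO_X(-F))$, and a short monotonicity argument using that $\{\cJ(\fa^c)\}$ is decreasing and constant on $[\xi_{i-1},\xi_i)$ shows $\inf\{c\mid\cJ(\fa^c)\nsupseteq\fb\}=\xi_i$, whence $\xi_F=\xi_i$.

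Two facts from Zariski--Lipman theory are the non-formal input. First, the bijection between antinef divisors in $\Lambda$ and complete ideals of finite colength generating invertible $\cO_X$-sheaves (recalled in the Preliminaries) is order-reversing: for antinef $F_1,F_2$ one has $F_1\le F_2$ iff $\Gamma(X,\cO_X(-F_1))\supseteq\Gamma(X,\cO_X(-F_2))$ --- one direction from $\cO_X(-F_1)\supseteq\cO_X(-F_2)$, the other by applying $(-)\cO_X$ to the inclusion of ideals and using $\fb_{F_i}\cO_X=\cO_X(-F_i)$. Second, for any $A\in\Lambda$ there is a smallest antinef divisor $\widetilde A\in\Lambda$ with $\widetilde A\ge A$ (the antinef closure, produced by Laufer-type unloading), and $\Gamma(X,\cO_X(-A))=\Gamma(X,\cO_X(-\widetilde A))$. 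Combining these: if $F$ is antinef and $A\in\Lambda$, then $\Gamma(X,\cO_X(-A))\supseteq\Gamma(X,\cO_X(-F))$ iff $\widetilde A\le F$, and since $F$ is itself an antinef divisor $\ge\widetilde A$ precisely when $F\ge A$ (minimality of $\widetilde A$), this is equivalent to $F\ge A$.

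The computational core is then the description of when $\cJ(\fa^c)\supseteq\fb$. By definition $\cJ(\fa^c)=\Gamma(X,\cO_X(-A_c))$ with $A_c:=\lfloor cD\rfloor-K\in\Lambda$, so by the last paragraph $\cJ(\fa^c)\supseteq\fb$ iff $F\ge A_c$, i.e.\ $f_\nu+k_\nu\ge\lfloor cd_\nu\rfloor$ for all $\nu$. As $f_\nu+k_\nu+1\in\bbZ$, the inequality $\lfloor cd_\nu\rfloor\le f_\nu+k_\nu$ holds iff $cd_\nu<f_\nu+k_\nu+1$, and requiring this for all $\nu$ is exactly $c<\min_\nu\frac{f_\nu+k_\nu+1}{d_\nu}=\xi_F$. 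Hence $\{c\in\bbQ_{>0}\mid\cJ(\fa^c)\nsupseteq\fb\}=\bbQ_{>0}\cap[\xi_F,\infty)$, whose infimum is $\xi_F$ (note $\xi_F>0$ since $k_\nu\ge1$, $f_\nu\ge0$, $d_\nu\ge1$). Finally $\xi_F$ is a jumping number: for small $\epsilon>0$ we get $\cJ(\fa^{\xi_F-\epsilon})\supseteq\fb$ while $\cJ(\fa^{\xi_F})\nsupseteq\fb$, so the two multiplier ideals differ, and monotonicity forces $\cJ(\fa^{\xi_F})\subsetneq\cJ(\fa^{\xi_F-\epsilon})$; by the characterization of the discrete sequence $0=\xi_0<\xi_1<\cdots$ this means $\xi_F\in\{\xi_1,\xi_2,\dots\}$.

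I expect the main obstacle to be expository rather than conceptual: correctly marshalling the two Zariski--Lipman inputs (the order-reversing bijection and the antinef closure with $\Gamma(X,\cO_X(-A))=\Gamma(X,\cO_X(-\widetilde A))$) in the present notation, and verifying in the ``only if'' direction that the antinef closure of $\lfloor\xi'D\rfloor-K$ already lives on $X$, so that the resulting $F$ belongs to $\Lambda$ as the statement requires. Once these are set up, the equivalence $\cJ(\fa^c)\supseteq\fb\Leftrightarrow c<\xi_F$ and the passage to jumping numbers are routine floor-function manipulations together with the monotonicity and discreteness of multiplier ideals. Alternatively --- and this is the shortcut actually used here --- one may simply transcribe \cite[Proposition 6.7 and Proposition 7.2]{J} into the notation of this paper.
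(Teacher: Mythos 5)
Your proposal is correct and follows exactly the line of reasoning behind the cited propositions in \cite{J} (Propositions 6.7 and 7.2) --- the order-reversing bijection between antinef divisors and complete ideals, the antinef closure via unloading, and the resulting floor-function equivalence $\cJ(\fa^c)\supseteq\fb\iff c<\xi_F$. The paper itself offers no proof beyond the citation, so your write-up is a faithful unpacking of what is being invoked, and I see no gap in it.
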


\begin{notation}
We write for any two divisors $F=fE,G=gE\in\Lambda_\mathbb Q$ and for any vertex $\nu$ 
$$
\lambda(F,G;\nu):=\frac{f_\nu+k_\nu+1}{g_\nu}.
$$
For any integer $a$ we set
$$
\lambda(a,\nu)=\lambda(a,D;\nu):=\lambda(aE,D;\nu).
$$ 
Furthermore, we call the set 
$$
\{\nu\in\Gamma\mid\lambda(f_\nu,\nu)=\xi\}
$$ 
the \textit{support} of the jumping number $\xi$ with respect to the divisor $F$. The set of jumping 
numbers of $\fa$ supported at a vertex $\mu\in\Gamma$ is denoted by
$$
\mathcal H_\mu^\fa:=\{\xi_F\mid F\in\Lambda\textit{ is antinef and }\xi_F=\lambda(F,D;\mu)\}.
$$
\end{notation}

Recall that the function $\lambda_F:|\Gamma|\to\mathbb Q$, where $F=\sum_{\nu\in\Gamma}f_\nu E_\nu$ is a divisor and 
$\lambda_F(\nu)=\lambda(f_\nu,\nu)$, makes the dual graph as an ordered tree. In \cite{HJ} we investigated this 
kind of ordered tree structures, and further, we proved that a number being a jumping 
number is equivalent to the existence of certain kind of ordered tree structures. In the 
sequel, we make use of these results. 

\begin{rem}\label{DualG}
Note that in \cite{HJ} and \cite{J} $\Gamma$ is the dual graph of the minimal principalization 
of $\fa$. We may loosen this restriction and consider the dual graph of a principalization 
of the ideal. In the sequel, we may think $\Gamma$ as a dual graph of any ideal corresponding 
to some antinef divisor in $\Lambda$. This is convenient, and it is possible because if $\fb$ is such 
an ideal, then the principalization corresponding to $\Gamma$ is a principalization of $\fb$, and the 
minimal principalization is obtained by blowing down. Observe that the ordered tree 
structures behave accordingly. Suppose that the divisor corresponding to $\fb$ is $gE$ and 
that the dual graph $\Gamma_\fb$ of its minimal principalization is obtained by blowing down a 
vertex $\nu\in\Gamma$, then the valuation matrix of $\fb$ is just a restriction of that of $\fa$. For a divisor 
$fE\in\Lambda_\mathbb Q$ and for a vertex $\gamma\in\Gamma_\fb$ we get $\lambda(fE|_{\Gamma_\fb},gE|_{\Gamma_\fb};\gamma)=\lambda(fE,gE;\gamma)$. Thus the 
ordered tree structures provided by $\lambda$ (see \cite{HJ}) can be obtained as restrictions, as well.
\end{rem}

Recall our main result in \cite[Theorem 1]{HJ}:
 
\begin{thm}[Theorem 1 in \cite{HJ}]
We have $\xi\in\mathcal H^\fa_\mu$ if and only if there is a (connected) set 
$U\subset\Gamma$ containing $\mu$ and a set of nonnegative integers $$\{a_{\eta}\in\mathbb N\mid d(\eta,U)\le1\}$$ satisfying
\begin{itemize}
	\item [i)] $\lambda(a_\eta,\eta)>\xi=\lambda(a_\gamma,\gamma)$ for every $\gamma\in U$ and $\eta\sim U$;
	\item [ii)] $w_{\Gamma}(\gamma)a_\gamma\ge\sum_{\nu\sim\gamma}a_\nu$ for every $\gamma\in U$.
\end{itemize}
\end{thm}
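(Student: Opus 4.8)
The plan is to deduce everything from Proposition~\ref{parametrisointi}, which says that $\xi\in\mathcal H^\fa_\mu$ precisely when there is an antinef divisor $F=fE\in\Lambda$ with $\xi=\xi_F=\min_\nu\lambda(f_\nu,\nu)$ and, in addition, $\lambda(f_\mu,\mu)=\xi$. So the theorem is really a dictionary between such divisors and the combinatorial data $\bigl(U,(a_\eta)_{d(\eta,U)\le1}\bigr)$, and I would prove the two implications by passing between the two descriptions.

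For ``$\Rightarrow$'', take $F$ as above and let $U:=\{\gamma\in\Gamma\mid\lambda(f_\gamma,\gamma)=\xi\}$ be the support of $\xi$ with respect to $F$; put $a_\eta:=f_\eta$ for every $\eta$ with $d(\eta,U)\le1$. These are nonnegative integers since $F$ is antinef. Condition ii) is just the proximity inequality $w_\Gamma(\gamma)f_\gamma\ge\sum_{\nu\sim\gamma}f_\nu$ for $\gamma\in U$; since $\Gamma$ is a tree, every neighbour of a vertex of $U$ lies at distance $\le1$ from $U$, so the right-hand side involves only the prescribed $a_\nu$. Condition i) is immediate from the definition of $U$ ($\lambda(a_\gamma,\gamma)=\xi$ on $U$) together with $\xi_F=\min_\nu\lambda(f_\nu,\nu)$ (so $\lambda(a_\eta,\eta)\ge\xi$ for $\eta\sim U$, with equality excluded because $\eta\notin U$). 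That $U\ni\mu$ is the hypothesis $\xi\in\mathcal H^\fa_\mu$. The one genuine point is that $U$ is connected, for which I would isolate the lemma: if $F$ is antinef and $\gamma_0\sim\gamma_1\sim\cdots\sim\gamma_r$ with $\lambda(f_{\gamma_0},\gamma_0)=\lambda(f_{\gamma_r},\gamma_r)=\xi_F$, then $\lambda(f_{\gamma_i},\gamma_i)=\xi_F$ for all $i$ --- i.e.\ the minimum locus of $\lambda_F$ is ``convex'' along paths, a property that has to be extracted from the proximity inequalities. Granting this, the path between any two vertices of $U$ stays in $U$, so $U$ is a connected subtree.

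For ``$\Leftarrow$'', assume $\bigl(U,(a_\eta)\bigr)$ satisfies i) and ii); I would manufacture $F$. Let $G:=\sum_{d(\gamma,U)\le1}a_\gamma E_\gamma+\sum_{d(\gamma,U)\ge2}\lceil\xi d_\gamma-k_\gamma-1\rceil^{+}E_\gamma$ and let $F$ be its antinef closure, which only raises coefficients. By construction $\lambda(f_\gamma,\gamma)\ge\xi$ for every $\gamma$, so $\xi_F\ge\xi$; and if the closure leaves the coefficients on $\{d(\gamma,U)\le1\}$ unchanged, then $f_\mu=a_\mu$, hence $\lambda(f_\mu,\mu)=\xi$ by i) with $\gamma=\mu$, so $\xi_F=\xi$ is attained at $\mu$ and Proposition~\ref{parametrisointi} gives $\xi\in\mathcal H^\fa_\mu$. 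That $G$ is already antinef at the vertices of $U$ is exactly condition ii); the delicate part is that the unloading performed on the branches hanging off $U$ never propagates back across a frontier vertex $\eta\sim U$. This is where the strict inequality $\lambda(a_\eta,\eta)>\xi$ in i) supplies the slack, together with $\widehat d_\eta=w_\Gamma(\eta)d_\eta-\sum_{\nu\sim\eta}d_\nu\ge0$ (valid as $D$ is antinef, by~\eqref{P}) and $\widehat k_\eta=2-w_\Gamma(\eta)$ from~\eqref{K} to compare $w_\Gamma(\eta)\lambda(a_\eta,\eta)$ with the sum of the $\lambda$'s of the neighbours of $\eta$.

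I expect the main obstacle to be the same phenomenon seen from the two sides: controlling how the proximity inequalities cross the boundary of $U$ --- in the forward direction this is the convexity lemma for the minimum locus of $\lambda_F$, in the converse it is the stabilisation of the antinef closure without disturbing $U$. The fiddliest case should be the end vertices adjacent to $U$, where $w_\Gamma$ equals $1$ and the factor $\widehat d$ of $D$ may vanish, so that the required slack must come entirely from i); I would expect the cleanest treatment to route through the ordered-tree formalism for $\lambda_F$ developed in~\cite{HJ} rather than through coefficientwise divisor manipulations.
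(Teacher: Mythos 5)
This theorem is not proved in the paper; it is quoted from \cite{HJ}, and Lemmas~\ref{hj5} and~\ref{hj7} immediately below it are the ingredients of that proof. So you are reproving \cite[Theorem~1]{HJ}. Your reduction to Proposition~\ref{parametrisointi} is the right start, but both implications are left as plans, and the one lemma you state explicitly is false. For ``$\Rightarrow$'' you take $U$ to be the full minimum locus $\{\gamma:\lambda(f_\gamma,\gamma)=\xi_F\}$ and assert that it is connected whenever $F$ is antinef. It need not be. With three consecutive free blowups the dual graph is a chain $1\sim2\sim3$ with $V=\left(\begin{smallmatrix}1&1&1\\1&2&2\\1&2&3\end{smallmatrix}\right)$ and $k=(1,2,3)$; take $\widehat d=(1,0,1)$ and $\widehat f=(1,1,1)$ (both antinef), so $d=(2,3,4)$, $f=(3,5,6)$ and $\bigl(\lambda(f_\nu,\nu)\bigr)_\nu=(5/2,\,8/3,\,5/2)$: the minimum locus is $\{1,3\}$, disconnected. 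Your ``convexity along paths'' fails exactly because $\widehat f_2>0$ while $\widehat d_2=0$. The repair is cheap: take $U$ to be the \emph{connected component of $\mu$} in the minimum locus; then $U$ is connected by fiat, and the strict inequality in i) at $\eta\sim U$ is automatic, since a neighbour of $U$'s component lying in the minimum locus would itself be in $U$. But as written your forward argument rests on a false statement.

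For ``$\Leftarrow$'', the scheme ``seed $G$ with the given $a$'s near $U$ and minimal coefficients elsewhere, then take the antinef closure'' defers precisely the one nontrivial step: that the closure leaves $g_\mu$ (indeed all of $U$) untouched. This is not automatic. Unloading at some $\eta\sim U$ raises $g_\eta$, which lowers $\widehat g_\gamma$ at the neighbour $\gamma\in U$ and can cascade inward, and the heuristics you cite (strictness of i), $\widehat d_\eta\ge0$, $\widehat k_\eta=2-w_\Gamma(\eta)$) do not by themselves control this when $v_\Gamma(\eta)\ge2$ and a far neighbour $\theta$ of $\eta$ has a large $\lambda(g_\theta,\theta)$. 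What actually closes the gap in \cite{HJ}, and what the present paper re-uses inside the proof of Lemma~\ref{dM2}, is the explicit \emph{outward construction} of an antinef $A$ extending $(a_\eta)$: propagate from $U$ using Lemma~\ref{hj5}, and handle end vertices with Lemma~\ref{hj7}. Once such an $A\ge G$ with $a_\mu$ unchanged exists, the antinef closure $F$ of $G$ is squeezed $G\le F\le A$ and $f_\mu=a_\mu$ follows. Your closure framing is a fine epilogue but cannot substitute for that construction; as written, the ``$\Leftarrow$'' direction restates what has to be shown rather than showing it.
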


\noindent For further use, we also give here a refined versions of \cite[Lemma 5]{HJ} and \cite[Lemma 7]{HJ}.

\begin{lem}
\label{hj5}
Given any vertex $\gamma\in\Gamma$ and any nonnegative integer $a_\gamma$, we may choose for
every vertex $\eta\sim\gamma$ a nonnegative integer $a_\eta$ so that
$$
w_{\Gamma}(\gamma)a_{\gamma}\ge\sum_{\eta\sim\gamma}a_\eta
	\text{ and }
\lambda(a_\eta,\eta)\ge\lambda(a_\gamma,\gamma),
$$
where the latter inequality holds for each $\eta\sim\gamma$ except at most one. More precisely, if
$$
\{\eta\mid\eta\sim\gamma\}=\{\eta_1,\dots\eta_m\},
$$
where $m>1$, then the following is true:
\begin{itemize}
	\item[1)]
		If it is possible to find a nonnegative integer $a_{\eta_1}$ with $\lambda(a_{\eta_1},\eta_1) = \lambda(a_\gamma,\gamma)$, then 
		one may choose the other integers $a_{\eta_j}$ so that
		$$
		\lambda(a_{\eta_2},\eta_2)\ge\lambda(a_\gamma,\gamma)
			\text{ and }
		\lambda(a_{\eta_j},\eta_j)>\lambda(a_\gamma,\gamma)
		$$
    for all $2< j\le m$. 
	\item[2)] 
	If it is possible to find a nonnegative integer $a_{\eta_1}$ satisfying $\lambda(a_{\eta_1},\eta_1)<\lambda(a_\gamma,\gamma)$ or, 
	in the case $\widehat d_\gamma>0$, 	$\lambda(a_{\eta_1},\eta_1)=\lambda(a_\gamma,\gamma)$, then one can choose the other integers $a_\eta$ 
	in such a way that
	$$
	\lambda(a_{\eta_j},\eta_j)>\lambda(a_\gamma,\gamma)
	$$
	holds for every $1<j\le m$.
\end{itemize}
\end{lem}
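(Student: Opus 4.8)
The plan is to derive Lemma~\ref{hj5} from the unrefined \cite[Lemma 5]{HJ}, whose content is exactly the existence of nonnegative integers $a_{\eta_1},\dots,a_{\eta_m}$ with $w_\Gamma(\gamma)a_\gamma\ge\sum_j a_{\eta_j}$ and $\lambda(a_{\eta_j},\eta_j)\ge\lambda(a_\gamma,\gamma)$ for all but at most one index; the refinement is a bookkeeping statement about which index may be the exceptional one and how strict the inequalities can be made. The key observation is that the function $a\mapsto\lambda(a,\eta)=(a+k_\eta+1)/d_\eta$ is strictly increasing in $a$, so increasing a chosen $a_{\eta_j}$ by one strictly increases $\lambda(a_{\eta_j},\eta_j)$ while still preserving the needed inequalities as long as the budget $w_\Gamma(\gamma)a_\gamma-\sum_j a_{\eta_j}$ stays nonnegative. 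Thus the real question is whether there is always enough budget to "push up" all but the designated coordinate $\eta_1$.

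First I would recall (or re-derive from \eqref{P}) that the proximity inequality at $\gamma$ for an antinef divisor reads $w_\Gamma(\gamma)f_\gamma\ge\sum_{\nu\sim\gamma}f_\nu$, and that, crucially, when $\widehat d_\gamma>0$ the divisor $D$ itself satisfies $w_\Gamma(\gamma)d_\gamma>\sum_{\nu\sim\gamma}d_\nu$ strictly (since $\widehat d_\gamma=w_\Gamma(\gamma)d_\gamma-\sum_{\nu\sim\gamma}d_\nu>0$). This strict slack is precisely what makes case 2) work in the borderline situation $\lambda(a_{\eta_1},\eta_1)=\lambda(a_\gamma,\gamma)$ with $\widehat d_\gamma>0$: one has one unit of budget to spare relative to the ``tight'' configuration, which can be spent to bump one of the remaining $a_{\eta_j}$. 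Next I would set up the argument for case 1): assume $a_{\eta_1}$ realizes equality $\lambda(a_{\eta_1},\eta_1)=\lambda(a_\gamma,\gamma)$; apply \cite[Lemma 5]{HJ} to produce integers $a_{\eta_j}$ ($j\ge2$) with $\lambda(a_{\eta_j},\eta_j)\ge\lambda(a_\gamma,\gamma)$ and $w_\Gamma(\gamma)a_\gamma\ge a_{\eta_1}+\sum_{j\ge2}a_{\eta_j}$ — here $\eta_1$ is \emph{not} the exceptional index because equality is attained there by hypothesis, so the exceptional index (if any) lies among $j\ge2$; rename so that $\eta_2$ absorbs it. Then for every $j>2$ either $\lambda(a_{\eta_j},\eta_j)>\lambda(a_\gamma,\gamma)$ already, or $\lambda(a_{\eta_j},\eta_j)=\lambda(a_\gamma,\gamma)$; in the latter subcase I would argue the budget cannot be exhausted (otherwise every coordinate is tight and summing the defining equalities $a_{\eta_j}d_\gamma = d_{\eta_j}a_\gamma + (k_{\eta_j}+1)d_\gamma/\dots$ — more cleanly, $\lambda(a_{\eta_i},\eta_i)=\lambda(a_\gamma,\gamma)$ for all $i$ forces, via the relation $\sum_i a_{\eta_i}=w_\Gamma(\gamma)a_\gamma$ and the analogous identity for $D$ together with \eqref{PW} and $k$ from \eqref{K}, a contradiction with $\lambda(a_\gamma,\gamma)$ being a genuine jumping-type value), hence there is a spare unit which I add to $a_{\eta_j}$, making it strict without breaking the sum constraint. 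Iterating over $j=3,\dots,m$ gives the claim.

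Case 2) is handled the same way, the hypothesis $\lambda(a_{\eta_1},\eta_1)<\lambda(a_\gamma,\gamma)$ (or $=$ with $\widehat d_\gamma>0$) providing either outright or via the strict $D$-slack the extra budget needed to make \emph{all} of $\lambda(a_{\eta_j},\eta_j)$, $j>1$, strict; concretely one starts from the configuration given by \cite[Lemma 5]{HJ}, notes that decreasing $a_{\eta_1}$ down to the value forced by the strict inequality frees up units, and distributes them one at a time to the remaining coordinates, each time converting an equality to a strict inequality. The main obstacle I anticipate is the ``no tight configuration'' lemma used above — i.e., ruling out that $\lambda(a_{\eta_i},\eta_i)=\lambda(a_\gamma,\gamma)$ can hold simultaneously at $\gamma$ and all its neighbours with the sum constraint saturated and $\widehat d_\gamma=0$; this requires combining \eqref{P}, \eqref{PW}, the formula $k_\nu=\sum_\mu q_{\nu,\mu}$ from \eqref{K}, and the integrality of the $a$'s to force a numerical impossibility, and is exactly the place where the hypothesis ``$\widehat d_\gamma>0$'' in the borderline clauses becomes indispensable. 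Everything else is the monotonicity of $\lambda(\cdot,\eta)$ together with careful index renaming, which I would dispatch quickly.
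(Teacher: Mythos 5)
The paper handles this lemma by citing \cite[Lemma~5]{HJ} for part~2) and for the coarse conclusion of part~1), and proves only the new refinement in~1) by a clean \emph{decrement-and-swap}: taking $a_{\eta_2}$ to be the least nonnegative integer with $\lambda(a_{\eta_2},\eta_2)>\lambda(a_\gamma,\gamma)$, replacing it by $a_{\eta_2}-1$ (which gives a strictly smaller value, provided $a_{\eta_2}>0$), invoking \cite[Lemma~5]{HJ} to get strict inequalities at every $\eta_j$ with $j\neq 2$ and the budget saturated with \emph{equality}, and then observing that $\eta_1$ has necessarily overshot (from $=$ to $>$), so a unit can be moved from $\eta_1$ back to $\eta_2$ while keeping $a_{\eta_1}$ at its prescribed value. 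The degenerate case $a_{\eta_2}=0$ is dealt with separately via \cite[Lemma~3]{HJ}, which forces $a_\gamma=a_{\eta_1}=0$. Your proposal takes a genuinely different route — a local \emph{bump-up} argument distributing spare budget one unit at a time — but it has a gap at precisely the point you flag as the ``main obstacle.''

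The gap is in the inference ``the budget cannot be exhausted (otherwise every coordinate is tight\dots).'' An exhausted budget $w_\Gamma(\gamma)a_\gamma=\sum_j a_{\eta_j}$ does \emph{not} force $\lambda(a_{\eta_j},\eta_j)=\lambda(a_\gamma,\gamma)$ at every $j$: some $a_{\eta_j}$ may sit strictly above the threshold while others sit exactly on it and the sum is still saturated. What your computation (combining \eqref{P}, \eqref{PW}, \eqref{K}) actually yields is that \emph{all coordinates tight together with} budget exhaustion and $\widehat d_\gamma=0$ forces $v_\Gamma(\gamma)=2$; it says nothing about the mixed case. So after applying \cite[Lemma~5]{HJ} you may well land in a configuration with one or more coordinates among $\eta_3,\dots,\eta_m$ exactly tight and no spare units left, and the proposed iteration stalls. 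The paper sidesteps this entirely: it never reasons about which coordinates are tight; instead it temporarily lowers $a_{\eta_2}$ below the threshold, uses case~2) to force strict inequalities \emph{everywhere else} (in particular $\eta_1$ goes strict, so $a_{\eta_1}$ gains at least one unit), and then hands that unit back to $\eta_2$. If you want to salvage a bump-up proof you would need a different source of slack than the all-tight contradiction — and you would also need to handle the $a_{\eta_2}=0$ boundary, which your sketch does not address.

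Two smaller points. First, the monotonicity you emphasize at the start is correct but not where the difficulty lies; the difficulty is exactly the budget accounting. Second, for part~2) you are re-deriving a statement that \cite[Lemma~5]{HJ} already gives verbatim (including the $\widehat d_\gamma>0$ borderline), so that part of the proposal, while not wrong, duplicates the cited lemma rather than proving something new.
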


\begin{proof}
The proof is conducted in \cite[Lemma 5]{HJ} except for the amendment in 1), which 
claims that if we have nonnegative integers $a_{\eta_1}$ and $a_{\eta_2}$ satisfying
\begin{equation}\label{>=>}
	\lambda(a_{\eta_2},\eta_2)
	>\lambda(a_\gamma,\gamma)
	=\lambda(a_{\eta_1},\eta_1)
	>\lambda(a_{\eta_2}-1,\eta_2),
\end{equation}
then we may find nonnegative integers $a_{\eta_j}$ for $2<j\le m$ so that
$$
w_{\Gamma}(\gamma)a_{\gamma}=\sum_{j=1}^m a_{\eta_j}
	\text{ and }
\lambda(a_{\eta_j},\eta_j)\ge\lambda(a_\gamma,\gamma),
$$
where the inequality is strict for $1<j\le m$. 

To prove that, suppose that Equation \eqref{>=>} holds. For $\mu,\nu\in\Gamma$, write 
$$
\ga_{\mu,\nu}:=k_\nu+1-\frac{d_\nu}{d_\mu}(k_\mu+1).
$$ 
Note that if $a_{\eta_2}=0$, then by \cite[Lemma 3 a)]{HJ}
$$
a_\gamma-1\le a_\gamma+\ga_{\eta_2,\gamma}=d_\gamma(\lambda(a_{\gamma},\gamma)-\lambda(a_{\eta_2},\eta_2)),
$$ 
which must be negative. Therefore $a_\gamma=0$, and then similarly, by \cite[Lemma 3 a)]{HJ},
$$
a_{\eta_1}-1
< a_{\eta_1}+\ga_{\gamma,\eta_1}
=d_{\eta_1}(\lambda(a_{\eta_1},\eta_1)-\lambda(a_{\gamma},\gamma))
=0,
$$
so that $a_{\eta_1}=0$, but then the claim follows from \cite[Lemma 3 b)]{HJ}.

Assume then that $a_{\eta_2}>0$ and set $a'_{\eta_2}:=a_{\eta_2}-1$. Now $\lambda(a'_{\eta_2},\eta_2)<\lambda(a_\gamma,\gamma)$, and by 
\cite[Lemma 5]{HJ} we may find nonnegative integers $a'_{\eta_j}$ for $1\le j\le m$, $j\neq2$, so that 
$$
w_{\Gamma}(\gamma)a_{\gamma}=\sum_{j=1}^m a'_{\eta_j}\text{ and }\lambda(a'_{\eta_j},\eta_j)>\lambda(a_\gamma,\gamma)\text{ for }j\neq2.
$$
But then $\lambda(a'_{\eta_1},\eta_1)>\lambda(a_{\eta_1},\eta_1)$. Clearly, we may choose the integers $a'_{\eta_j}$ so that $a'_{\eta_1}=a_{\eta_1}+1$. 
It follows that 
$$
w_{\Gamma}(\gamma)a_{\gamma}=a_{\eta_1}+a_{\eta_2}+\sum_{j=3}^m a'_{\eta_j}
	\text{ and }
\lambda(a'_{\eta_j},\eta_j)>\lambda(a_\gamma,\gamma)
	\text{ for }
2<j\le m.
$$
Choosing now $a_{\eta_j}=a'_{\eta_j}$ for $2<j\le m$ yields the claim.
\end{proof}

Practically, the lemma shows that if $\lambda_F(\mu)$ is a local minimum for a function $\lambda_F$ 
where $F$ is an effective divisor, then we may find an antinef divisor $A=\sum_\nu a_\nu E_\nu$ for 
which $\lambda(a_\mu,\mu)=\lambda(f_\mu,\mu)$ is the global minimum of the function $\lambda_A$. The only problem 
that may arise in finding such integers $a_\nu$ is the situation where we already have integers 
$a_\gamma$ and $a_\tau$ with $\lambda(a_\gamma,\gamma)=\lambda(a_\tau,\tau)$, and $\tau$ is an end vertex. We cannot go on choosing 
integers $a_\nu$ with $\tau\sim\nu\neq\gamma$ and $\lambda(a_\nu,\nu)=\lambda(a_\tau,\tau)$, because there are no such vertices, 
and it may happen that $\widehat a_\tau:=w_\gamma(\tau)a_\tau-a_\gamma<0$. This is the reason why we cannot apply 
1) of Lemma \ref{hj5} to an end. Nevertheless, rephrasing \cite[Lemma 7]{HJ}, the next Lemma shows 
that 2) of Lemma \ref{hj5} is applicable even if the vertex in question is an end. 

\begin{lem}\label{hj7}
Suppose $\tau$ is an end and $\gamma$ is adjacent to it. If $a_\tau$ and $a_\gamma$ are such integers 
that $\lambda(a_\gamma,\gamma)\le\lambda(a_\tau,\tau)$, where the equality holds only if $\widehat d_\tau>0$, then
$$
w_\Gamma(\tau)a_\tau\ge a_\gamma.
$$
\end{lem}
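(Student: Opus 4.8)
The plan is to reduce the statement to an inequality about a single "transform" coefficient and to exploit the fact that $\tau$ is an end, so that the dual graph near $\tau$ is as simple as possible. First I would recall from \eqref{P} that for the end vertex $\tau$ we have $w_\Gamma(\tau)=1$ when $v_\Gamma(\tau)\le 1$ and $\tau$ is adjacent to at most the single vertex $\gamma$; more precisely, if $\tau$ is an end with $v_\Gamma(\tau)=1$, then $w_\Gamma(\tau)a_\tau-a_\gamma=\widehat a_\tau$ is exactly the factor of the divisor $a_\tau E_\tau+a_\gamma E_\gamma+\cdots$ at $\tau$ (and if $v_\Gamma(\tau)=0$ the graph is a single vertex and there is nothing to prove, since then $D=d_\tau E_\tau$ and $\widehat d_\tau>0$ forces the claim trivially). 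So the content of the lemma is: under the hypothesis $\lambda(a_\gamma,\gamma)\le\lambda(a_\tau,\tau)$ (with equality only if $\widehat d_\tau>0$) we must show $w_\Gamma(\tau)a_\tau\ge a_\gamma$, i.e. the would-be factor at the end is nonnegative.

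Next I would translate the hypothesis into a numerical inequality using the quantity
$$
\ga_{\mu,\nu}:=k_\nu+1-\frac{d_\nu}{d_\mu}(k_\mu+1)
$$
introduced in the proof of Lemma~\ref{hj5}, together with \cite[Lemma 3]{HJ}: one has
$$
a_\gamma+\ga_{\tau,\gamma}=d_\gamma\bigl(\lambda(a_\gamma,\gamma)-\lambda(a_\tau,\tau)\bigr)\le 0,
$$
so $a_\gamma\le -\ga_{\tau,\gamma}$. On the other hand $w_\Gamma(\tau)a_\tau$ should be bounded below by the analogous expression at $\tau$. The key computation is to compare $-\ga_{\tau,\gamma}$ with $w_\Gamma(\tau)a_\tau$; this is where the specific geometry of an end vertex enters, via the proximity relations: an end $\tau$ is either free and proximate to one vertex, or the root, and in either case the data $(d_\tau,k_\tau,d_\gamma,k_\gamma)$ are constrained by \eqref{P} and \eqref{K}. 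I expect that, when $\widehat d_\tau=0$, one gets strict inequality $\lambda(a_\gamma,\gamma)<\lambda(a_\tau,\tau)$ forced by an integrality argument (the relevant difference is a nonzero integer of a definite sign), exactly as in the $a_{\eta_2}=0$ sub-case of Lemma~\ref{hj5}, which is why the hypothesis only needs equality to be excluded when $\widehat d_\tau=0$.

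Thus the main steps are: (1) dispose of the trivial case $v_\Gamma(\tau)=0$; (2) write $\widehat a_\tau=w_\Gamma(\tau)a_\tau-a_\gamma$ and express the hypothesis via $\ga_{\tau,\gamma}$ and \cite[Lemma 3]{HJ}; (3) do the arithmetic at the end vertex, using \eqref{P}, \eqref{K} and the proximity structure, to show $w_\Gamma(\tau)a_\tau\ge a_\gamma$ whenever $\lambda(a_\gamma,\gamma)<\lambda(a_\tau,\tau)$; (4) handle the boundary case $\lambda(a_\gamma,\gamma)=\lambda(a_\tau,\tau)$ separately, showing it can only occur when $\widehat d_\tau>0$, and in that case check the inequality directly (it should follow because $\widehat d_\tau>0$ gives the needed slack in the factor). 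The main obstacle will be step~(3)–(4): carefully tracking how the numerical invariants $d_\tau,d_\gamma,k_\tau,k_\gamma$ interact at an end so as to separate the strict case from the equality case, essentially re-running the bookkeeping behind \cite[Lemma 7]{HJ} in the present normalization where $\Gamma$ need not be the minimal principalization (cf. Remark~\ref{DualG}), so that one must make sure the restriction/blow-down compatibility of $\lambda$ does not disturb the argument.
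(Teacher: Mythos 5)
Your overall strategy is the right one — exploit the combinatorics at an end vertex (there is a single neighbour $\gamma$, so $\widehat a_\tau=w_\Gamma(\tau)a_\tau-a_\gamma$ by \eqref{P}, and the formulas \eqref{P}, \eqref{K} for $d$ and $k$ collapse nicely). The paper's proof does exactly this: it uses $\widehat k_\tau=2-w_\Gamma(\tau)=w_\Gamma(\tau)k_\tau-k_\gamma$ and $w_\Gamma(\tau)d_\tau=d_\gamma+\widehat d_\tau$ to rewrite
$$
\lambda(a_\tau,\tau)=\frac{w_\Gamma(\tau)a_\tau+k_\gamma+2}{d_\gamma+\widehat d_\tau},
$$
and then observes that whenever $\lambda(a_\gamma,\gamma)<\lambda(a_\tau,\tau)$, or $\lambda(a_\gamma,\gamma)=\lambda(a_\tau,\tau)$ with $\widehat d_\tau>0$, this forces $\lambda(a_\gamma,\gamma)<\frac{w_\Gamma(\tau)a_\tau+k_\gamma+2}{d_\gamma}$, i.e. $a_\gamma<w_\Gamma(\tau)a_\tau+1$, hence $a_\gamma\le w_\Gamma(\tau)a_\tau$ by integrality.

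However, the key identity you invoke is incorrect. You write
$$
a_\gamma+\ga_{\tau,\gamma}=d_\gamma\bigl(\lambda(a_\gamma,\gamma)-\lambda(a_\tau,\tau)\bigr),
$$
but this holds only when $a_\tau=0$ (that is the special case in which the paper applies \cite[Lemma 3 a)]{HJ} inside Lemma~\ref{hj5}). The general identity is
$$
a_\gamma-\frac{d_\gamma}{d_\tau}\,a_\tau+\ga_{\tau,\gamma}=d_\gamma\bigl(\lambda(a_\gamma,\gamma)-\lambda(a_\tau,\tau)\bigr),
$$
with the extra term $-\tfrac{d_\gamma}{d_\tau}a_\tau$ on the left. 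Dropping it is fatal: your conclusion $a_\gamma\le-\ga_{\tau,\gamma}$ is independent of $a_\tau$, and a short computation using $w_\Gamma(\tau)(k_\tau+1)=k_\gamma+2$ and $w_\Gamma(\tau)d_\tau=d_\gamma+\widehat d_\tau$ gives $-\ga_{\tau,\gamma}=1-\tfrac{\widehat d_\tau}{d_\tau}(k_\tau+1)\le 1$, which would force $a_\gamma\le 1$ — clearly too strong. Your next step, comparing $-\ga_{\tau,\gamma}$ with $w_\Gamma(\tau)a_\tau$, cannot repair this, because the left-hand side still does not see $a_\tau$. If you restore the missing term, the argument closes: one then gets $a_\gamma\le w_\Gamma(\tau)a_\tau+1-\tfrac{\widehat d_\tau}{d_\tau}(a_\tau+k_\tau+1)$, and both cases ($\widehat d_\tau>0$, or $\widehat d_\tau=0$ with strict hypothesis) yield $a_\gamma<w_\Gamma(\tau)a_\tau+1$, which is essentially the paper's computation repackaged through $\ga_{\tau,\gamma}$. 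As written, though, the plan rests on a false identity and does not go through.
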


\begin{proof}
By Equation \eqref{K} we know that $\widehat k_\tau=2-w_\Gamma(\tau)$. On the other hand, by Equation~\eqref{P} 
we have $\widehat k_\tau=w_\Gamma(\tau)k_\tau-k_\gamma$. Thus $w_\Gamma(\tau)(k_\tau+1)=k_\gamma+2$. Moreover, by Equation~\eqref{P} 
we get $w_\Gamma(\tau)d_\tau=d_\gamma+\widehat d_\tau$. This shows that
$$
\lambda(a_\tau,\tau)
=\frac{w_\Gamma(\tau)(a_\tau+k_\tau+1)}{w_\Gamma(\tau)d_\tau}
=\frac{w_\Gamma(\tau)a_\tau+k_\gamma+2}{d_\gamma+\widehat d_\tau}
$$
Therefore we see that
$$
\lambda(a_\gamma,\gamma)
=\frac{a_\gamma+k_\gamma+1}{d_\gamma}
<\frac{(w_\Gamma(\tau)a_\tau+1)+k_\gamma+1}{d_\gamma},
$$
which implies that $a_\gamma<w_\Gamma(\tau)a_\tau+1$, as wanted.
\end{proof}

By using these results we may construct suitable ordered tree structures, which in 
turn can prove that certain rationals are jumping numbers for our ideal supported at the 
desired vertex or vertices. The next lemma shows that in order to determine the jumping 
numbers of an ideal, we just need to know the jumping numbers supported at a vertex 
which is either a star or corresponds to a simple factor of the ideal.

\begin{lem}\label{3d}
A support of a jumping number contains a vertex which is either a star or 
corresponds to a simple factor of the ideal.
\end{lem}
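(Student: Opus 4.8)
The plan is to argue by contradiction, working directly with an antinef divisor rather than through the ordered-tree machinery. Suppose $\xi=\xi_F$ for some antinef $F=fE\in\Lambda$; since $\xi_F=\min_\nu\lambda(f_\nu,\nu)$, the support $T:=\{\nu\in\Gamma\mid\lambda(f_\nu,\nu)=\xi\}$ is non-empty and $\lambda(f_\eta,\eta)\ge\xi$ for every $\eta\in\Gamma$. Assume that $T$ contains neither a star nor a vertex with $\widehat d_\mu>0$; then $v_\Gamma(\mu)\le2$ and $\widehat d_\mu=0$ for every $\mu\in T$. The first step is to view $T$ as an induced subgraph of the tree $\Gamma$: it is a non-empty finite forest, hence it has a leaf, i.e. a vertex $\mu\in T$ with at most one neighbour lying in $T$. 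I would fix such a $\mu$ and write $\eta_1,\dots,\eta_r$ for its neighbours in $\Gamma$, where $r=v_\Gamma(\mu)\le2$. If $r=0$ then $\Gamma=\{\mu\}$ and $\widehat d_\mu=d_\mu\ge1$, against our assumption; so $r\in\{1,2\}$, and by the leaf property at least $r-1$ of the $\eta_i$ lie outside $T$ and therefore satisfy $\lambda(f_{\eta_i},\eta_i)>\xi$.

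Next I would exploit the hypothesis $\widehat d_\mu=0$. Applying \eqref{P} and \eqref{K} to the divisors $D$ and $K$ — exactly as in the proof of Lemma~\ref{hj7} — one obtains $w_\Gamma(\mu)d_\mu=\sum_i d_{\eta_i}$ and $w_\Gamma(\mu)(k_\mu+1)=\sum_i k_{\eta_i}+2$. Multiplying the identity $d_\mu\xi=f_\mu+k_\mu+1$ by $w_\Gamma(\mu)>0$ and feeding in the antinef inequality $w_\Gamma(\mu)f_\mu\ge\sum_i f_{\eta_i}$ then yields
$$\Big(\sum_{i=1}^r d_{\eta_i}\Big)\xi\ \ge\ \sum_{i=1}^r\big(f_{\eta_i}+k_{\eta_i}+1\big)+(2-r)\ =\ \sum_{i=1}^r d_{\eta_i}\,\lambda(f_{\eta_i},\eta_i)+(2-r),$$
which exhibits $\xi$, up to the correction term $2-r$, as a convex combination with positive weights $d_{\eta_i}$ of the values $\lambda(f_{\eta_i},\eta_i)\ge\xi$.

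The conclusion is then reached in two cases. If $r=1$, the correction term is $2-r=1$, so the displayed inequality reads $d_{\eta_1}\xi\ge d_{\eta_1}\lambda(f_{\eta_1},\eta_1)+1\ge d_{\eta_1}\xi+1$, which is absurd; this is precisely the mechanism behind Lemma~\ref{hj7}, and it shows that an end vertex can belong to a support only if it corresponds to a simple factor. If $r=2$, the correction term vanishes; but by the leaf property at least one neighbour, say $\eta_2$, lies outside $T$, so $\lambda(f_{\eta_2},\eta_2)>\xi$ while $\lambda(f_{\eta_1},\eta_1)\ge\xi$, and since $d_{\eta_1},d_{\eta_2}>0$ the right-hand side is strictly larger than $(d_{\eta_1}+d_{\eta_2})\xi$ — absurd again. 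Hence the assumption fails, and $T$ contains a star or a vertex corresponding to a simple factor of $\fa$. The only genuinely delicate point is the case $r=2$: a lone valence-two vertex with $\widehat d_\mu=0$ behaves like a subdivision point of an edge and cannot be a \emph{strict} local minimum of $\lambda_F$, so to turn this into a contradiction one needs a neighbour outside $T$; this is exactly why the argument must be applied at a leaf of the forest $T$ rather than at an arbitrary vertex of it.
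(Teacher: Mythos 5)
Your proof is correct and rests on exactly the same core computation as the paper's: multiplying $\lambda(f_\mu,\mu)=\xi$ by $w_\Gamma(\mu)$ and using $\widehat d_\mu=0$ together with the antinef condition $\widehat f_\mu\ge 0$ to compare $\xi$ with a weighted average of the neighbouring values $\lambda(f_{\eta_i},\eta_i)$. The only difference is organizational: the paper argues forward (a bad support vertex of valence $\le 2$ forces $\widehat f_\mu=0$, valence exactly $2$, and both neighbours into the support, and finiteness of $\Gamma$ terminates the walk), whereas you reach the contradiction immediately at a leaf of the support forest $T$, which tidily absorbs the termination argument into the choice of extremal vertex.
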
 

\begin{proof}
Let $\Gamma$ be a dual graph of an ideal $\fa=\prod_{\nu\in\Gamma}\fp_\nu^{\widehat d_\nu}$. Suppose $\xi$ is a jumping number 
of $\fa$ supported at a vertex $\gamma\in\Gamma$, for which $\widehat d_\gamma=0$ and $v_\Gamma(\gamma)<3$. By Proposition \ref{parametrisointi} we 
have an antinef divisor $F$ for which $\xi=\xi_F$. Further, we have $\xi=\lambda(f_\gamma,\gamma)\le\lambda(f_\nu,\nu)$ for 
any $\nu\in\Gamma$. As $\widehat k_\gamma=2-w_\Gamma(\gamma)$ by \eqref{K}, and on the other hand, $\widehat k_\gamma=w_\Gamma(\gamma)k_\gamma-\sum_{\eta\sim\gamma}k_\eta$, 
we see that $w_\Gamma(\gamma)(k_\gamma+1)=2+\sum_{\eta\sim\gamma}k_\eta$. By this and by \eqref{P} we obtain
$$
\lambda(f_\gamma,\gamma)
=\frac{w_\Gamma(\gamma)f_\gamma+w_\Gamma(\gamma)(k_\gamma+1)}{w_\Gamma(\gamma)d_\gamma}
=\frac{\widehat f_\gamma+2-v_\Gamma(\gamma)+\sum_{\eta\sim\gamma}(f_\eta+k_\eta+1)}{\widehat d_\gamma+\sum_{\eta\sim\gamma}d_\eta}.
$$ 
Since $\widehat d_\gamma=0$, $v_{\Gamma}(\gamma)<3$ and $\lambda(f_\eta,\eta)\ge\lambda(f_\gamma,\gamma)$, the above yields $v_\Gamma(\gamma)=2$, $\widehat f_\gamma=0$ and 
$\lambda(f_\eta,\eta)=\xi$ for any $\eta\sim\gamma$. In other words, $\gamma$ has exactly two adjacent vertices, which 
both are in the support of $\xi$. If neither of them is a star nor corresponds to a factor, we 
may apply the above to them. Because the dual graph contains finitely many vertices, 
we must eventually come up with a vertex in the support of $\xi$, which is either a star or 
corresponds to a factor.
\end{proof}

\section{Modifications of the factorization}

Let $\fa$, $D$ and $\Gamma$ be as above and let $V$ be the valuation matrix, $\widehat d\in\mathbb Q^\Gamma$ the factorization 
vector and $d = \widehat d V$ the valuation vector of $\fa$. According to \cite[Theorem 1 and Lemma~6]{HJ}, 
we may find for any antinef divisor $F=fE\in\Lambda$ an antinef divisor $G=gE$ such that 
$\widehat g_\nu>0$ only if $\nu$ is an end of $\Gamma$ and $\xi_F=\xi_{G}$. In this paper we further investigate divisors 
corresponding to a jumping number and develop a method to modify them in order to 
find ideals sharing the jumping numbers supported at a given vertex. To begin with, let 
us consider the mapping $\rho_{[\mu,\gamma]}:\Gamma\to\mathbb Q$, where 
$$
\rho_{[\mu,\gamma]}:\nu\mapsto\frac{V_{\gamma,\nu}}{V_{\mu,\nu}}.
$$

\begin{lem}\label{V<V}
The mapping $\rho_{[\mu,\gamma]}$ is strictly increasing on the path going from $\mu$ to $\gamma$ and it 
stays constant on any path going away from $[\mu,\gamma]$, in other words, $\rho_{[\mu,\gamma]}(\nu_1)<\rho_{[\mu,\gamma]}(\nu_2)$ 
if and only if $[\mu,\nu_1]\cap[\mu,\gamma]\subsetneq[\mu,\nu_2]\cap[\mu,\gamma]$. Moreover, if $\mu\sim\gamma$, then
\begin{equation}\label{Oletus}
\rho_{[\mu,\gamma]}(\gamma)=\frac{V_{\gamma,\mu}+1}{V_{\mu,\mu}}.
\end{equation}
\end{lem}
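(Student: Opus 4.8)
The plan is to work entirely with the combinatorics of the proximity matrix, using the reciprocity formula $V=V^{\textsc t}$ and the local formula \eqref{PW}, namely $w_\Gamma(\eta)V_{\mu,\eta}=\sum_{i\sim\eta}V_{\mu,i}+\delta_{\mu,\eta}$. The key object is the difference $V_{\gamma,\nu}V_{\mu,\nu'}-V_{\gamma,\nu'}V_{\mu,\nu}$ for adjacent $\nu\sim\nu'$; monotonicity of $\rho_{[\mu,\gamma]}$ along an edge $\nu\sim\nu'$ is exactly the statement that this quantity has a definite sign. So first I would fix an edge $\nu\sim\nu'$ of $\Gamma$ and set $\Delta(\nu,\nu'):=V_{\gamma,\nu}V_{\mu,\nu'}-V_{\gamma,\nu'}V_{\mu,\nu}$, aiming to show $\Delta(\nu,\nu')>0$ precisely when removing the edge $\nu\sim\nu'$ separates $\Gamma$ into a component containing both $\mu$ and $\nu$ and a component containing both $\gamma$ and $\nu'$ (i.e.\ $\nu$ is ``on the $\mu$-side'' and $\nu'$ on the ``$\gamma$-side''), and $\Delta(\nu,\nu')=0$ otherwise. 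Granting this, the asserted equivalence $\rho_{[\mu,\gamma]}(\nu_1)<\rho_{[\mu,\gamma]}(\nu_2)\iff[\mu,\nu_1]\cap[\mu,\gamma]\subsetneq[\mu,\nu_2]\cap[\mu,\gamma]$ follows by summing the sign of $\Delta$ along the (unique, since $\Gamma$ is a tree) paths from $\nu_1$ to $\nu_2$: $\rho_{[\mu,\gamma]}$ changes exactly across the edges of $[\mu,\gamma]$ and there it strictly increases as one moves from $\mu$ toward $\gamma$, while across every other edge it is locally constant.

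The heart of the argument is therefore the sign computation for $\Delta$ across a single edge, and I expect this to be the main obstacle. The natural tool is induction on the number of vertices of $\Gamma$, using that $\Gamma$ is built from the root by elementary modifications $\Gamma=\Gamma'(\eta,U)$, together with the behaviour of the valuation matrix under a blow-up: adding a vertex $\eta$ proximate to the vertices in $U$ changes $V$ in a controlled way (the new row/column $V_{\eta,\bullet}$ is determined by the proximity relation, essentially $V_{\eta,\eta}=1+\sum_{\gamma\in U}V_{\gamma,\eta}$ by \eqref{PW}, and $V_{\eta,\nu}=\sum_{\gamma\in U}V_{\gamma,\nu}$ for $\nu\ne\eta$, or $V_{\eta,\nu}=V_{\gamma_0,\nu}$ in the posterior case). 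One then checks that the sign of $\Delta(\nu,\nu')$ across a pre-existing edge is unchanged, and computes directly the sign across the new edge(s) incident to $\eta$, using the defining relations of the elementary modification and the inductive hypothesis applied inside $\Gamma'$. An alternative, possibly cleaner, route is to use the identity $V_{\mu,\nu}=v_\mu(\mathfrak p_\nu)$ together with the fact that $V_{\mu,\nu}$ depends only on the ``common part'' of the branches to $\mu$ and $\nu$; more precisely, if $\nu^{\flat}$ denotes the last vertex on both $[\tau_0,\mu]$ and $[\tau_0,\nu]$ (their meeting point from the root), one has monotonicity/constancy statements for $V_{\mu,\nu}$ as $\nu$ varies that reduce the edge computation to a $2\times2$ case. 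I would pursue whichever of these makes the sign bookkeeping shortest.

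Finally, for the last claim: when $\mu\sim\gamma$, the edge $\mu\sim\gamma$ is itself the path $[\mu,\gamma]$, so it remains to identify $\rho_{[\mu,\gamma]}(\gamma)=V_{\gamma,\gamma}/V_{\mu,\gamma}$ with $(V_{\gamma,\mu}+1)/V_{\mu,\mu}$; by reciprocity $V_{\gamma,\mu}=V_{\mu,\gamma}$, so this is equivalent to $V_{\gamma,\gamma}V_{\mu,\mu}=V_{\mu,\gamma}(V_{\mu,\gamma}+1)=V_{\mu,\gamma}^2+V_{\mu,\gamma}$, i.e.\ $\Delta(\mu,\gamma)=V_{\gamma,\gamma}V_{\mu,\mu}-V_{\gamma,\mu}V_{\mu,\gamma}=V_{\mu,\gamma}$ where I read $\Delta$ as $V_{\gamma,\gamma}V_{\mu,\mu}-V_{\gamma,\mu}V_{\mu,\gamma}$ across the edge $\mu\sim\gamma$. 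This should fall out of the same edge computation: in fact the general principle is that $V_{\gamma,\nu}V_{\mu,\nu'}-V_{\gamma,\nu'}V_{\mu,\nu}$ across an edge with $\nu'$ on the $\gamma$-side and $\nu$ on the $\mu$-side equals the product of the two ``partial'' $V$-values on either side, which specializes to the stated identity when $\nu=\mu$, $\nu'=\gamma$. Concretely I would prove \eqref{Oletus} directly by combining \eqref{PW} at $\eta=\mu$ and at $\eta=\gamma$: from $w_\Gamma(\mu)V_{\gamma,\mu}=\sum_{i\sim\mu}V_{\gamma,i}$ and $w_\Gamma(\mu)V_{\mu,\mu}=\sum_{i\sim\mu}V_{\mu,i}+1$, and the already-established fact that $V_{\gamma,i}/V_{\mu,i}$ is constant $=\rho_{[\mu,\gamma]}(\mu)<\rho_{[\mu,\gamma]}(\gamma)$ for $i\sim\mu$, $i\ne\gamma$, one solves for $V_{\gamma,\gamma}$ in terms of $V_{\mu,\mu}$, $V_{\gamma,\mu}=V_{\mu,\gamma}$ and cancels; this yields \eqref{Oletus} with only routine algebra once the monotonicity half is in hand.
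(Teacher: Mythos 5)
Your plan is in essence the paper's own: both arguments reduce the monotonicity claim to a per-edge sign computation and then argue by induction on the number of vertices of $\Gamma$, growing the graph one elementary modification $\Gamma=\Gamma'(\eta,U)$ at a time and exploiting the blow-up recursion $V_{\gamma,\eta}=\sum_{i\prec\eta}V_{\gamma,i}+\delta_{\eta,\gamma}$ for the new row/column of $V$. Your recasting in terms of the cross-difference $\Delta(\nu,\nu')=V_{\gamma,\nu}V_{\mu,\nu'}-V_{\gamma,\nu'}V_{\mu,\nu}$ rather than the ratios $\rho_{[\mu,\gamma]}(\nu)$ is cosmetic, as is your decision to keep $\mu,\gamma$ arbitrary; the paper instead first exploits the multiplicative cocycle $\rho_{[\mu,\gamma]}(\nu)=\rho_{[\mu,\eta]}(\nu)\rho_{[\eta,\gamma]}(\nu)$ to reduce at once to $\mu\sim\gamma$, which keeps the subsequent case analysis smaller.

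Two points of substance. First, a sign slip: with your definition of $\Delta$, if $\nu$ lies on the $\mu$-side of the edge and $\nu'$ on the $\gamma$-side then $\rho_{[\mu,\gamma]}(\nu)<\rho_{[\mu,\gamma]}(\nu')$ means $V_{\gamma,\nu}V_{\mu,\nu'}<V_{\gamma,\nu'}V_{\mu,\nu}$, i.e.\ $\Delta(\nu,\nu')<0$, not $>0$ as you state (you in fact silently switch to the opposite convention when you return to \eqref{Oletus}). Second, and more importantly, you treat \eqref{Oletus} as a coda to be deduced once the monotonicity is established, whereas it actually has to be available \emph{inside} the inductive step. The delicate case is $U=\{\mu,\gamma\}$ (a satellite blow-up): verifying the sign of $\Delta_{\eta,\gamma'}$ across the new edges $\mu\sim\eta$ and $\eta\sim\gamma$ does not reduce to a signed sum of old $\Delta$'s --- a direct expansion leaves an extra term, and showing that $\Delta_{\eta,\gamma'}(\mu,\eta)$ vanishes when $\gamma'$ lies beyond $\gamma$ is exactly the identity $V_{\mu,\mu}V_{\gamma,\gamma}=V_{\mu,\gamma}(V_{\mu,\gamma}+1)$, which \emph{is} \eqref{Oletus} for $\mu\sim\gamma$ in $\Gamma'$. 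The paper anticipates this by making \eqref{Oletus} part of the inductive statement. Your plan can be repaired without changing its shape: the derivation you sketch of \eqref{Oletus} from the off-path constancy and \eqref{PW} is correct (sum the two instances of \eqref{PW} at $\mu$, factor out $\rho_{[\mu,\gamma]}(\mu)$ from the off-path neighbours, and cancel, using $V_{\gamma,\mu}=V_{\mu,\gamma}$), and since constancy for $\Gamma'$ is part of your inductive hypothesis, you may invoke this derivation inside the step rather than at the end. As written, though, the proposal does not notice this dependence, and that is the one genuine gap.
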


\begin{proof}
Note first that since $\rho_{[\gamma,\mu]}(\nu)\rho_{[\mu,\gamma]}(\nu)=1$ for every $\nu$, the claim holds for $\rho_{[\gamma,\mu]}$ 
exactly when it holds for $\rho_{[\mu,\gamma]}$. If the claim holds whenever $\mu$ and $\gamma$ are adjacent vertices, 
then we get the desired result by induction on the distance of $\mu$ and $\gamma$, as 
$$
\rho_{[\mu,\gamma]}(\nu)=\rho_{[\mu,\eta]}(\nu)\rho_{[\eta,\gamma]}(\nu).
$$
Hence, it is enough to consider the cases where $\mu\sim\gamma$. We proceed by using induction 
on the number of vertices of the dual graph.

If $\Gamma$ consists of only two adjacent vertices, then 
$$
V=\hspace{-3pt}\left[\begin{array}{rr}1&1\\1&2\end{array}\right]
$$
and the case is clear.

Suppose that $\Gamma=\Gamma'(\eta, U)$ and that the claim holds on the graph $\Gamma'$. Note that the 
valuation matrix of $\Gamma'$ is just a restriction of that of $\Gamma$. Moreover, since the valuation 
matrix of $\Gamma$ is $V^{\textsc t}=V=(P^\textsc tP)^{-1}$, we see that $P_\eta V_\gamma=q_{\gamma,\eta}$, i.~e.,
\begin{equation}\label{Vi}
V_{\gamma,\eta}=\sum_{i\prec\eta}V_{\gamma,i}+\delta_{\eta,\gamma}.
\end{equation}
If now $\eta\notin[\mu,\gamma]$, then $\rho_{[\mu,\gamma]}(\nu)$ remains unaltered when $\nu\neq\eta$, and if $\nu=\eta$ then $j\in\Gamma^{\mu}_\gamma$ 
for any $j\prec\eta$ exactly when $\eta\in\Gamma_\gamma^\mu$. Therefore for any $j\prec\eta$,
$$
\rho_{[\mu,\gamma]}(\eta)
=\frac{\sum_{i\prec\eta}V_{\gamma,i}}{\sum_{i\prec\eta}V_{\mu,i}}
=\rho_{[\mu,\gamma]}(j).
$$

It remains to show that if $\mu\sim\eta$, then the claim holds for $\rho_{[\mu,\eta]}$, too. If  $U=\{\mu\}$ and 
$\nu\neq\eta$, then by Equation \eqref{Vi} we see that $V_{\eta,\nu}=V_{\mu,\nu}$, and further,
$$
\rho_{[\mu,\eta]}(\nu)
=\frac{V_{\eta,\mu}}{V_{\mu,\mu}}
<\frac{V_{\eta,\mu}+1}{V_{\mu,\mu}}
=\frac{V_{\eta,\eta}}{V_{\mu,\eta}}
=\rho_{[\mu,\eta]}(\eta),
$$
as wanted. Especially, Equation \eqref{Oletus} holds in this case. 

Suppose then that $U=\{\mu,\gamma\}$. Together with \eqref{Oletus} Equation \eqref{Vi} yields
\begin{align*}
	\rho_{[\mu,\eta]}(\gamma)
	&=\frac{V_{\mu,\gamma}+V_{\gamma,\gamma}}{V_{\mu,\gamma}}=1+\frac{V_{\gamma,\mu}+1}{V_{\mu,\mu}}\\
	&=1+\frac{V_{\gamma,\gamma}+V_{\gamma,\mu}+1}{V_{\mu,\gamma}+V_{\mu,\mu}}
		=\frac{V_{\mu,\eta}+V_{\gamma,\eta}+1}{V_{\mu,\eta}}
		=\rho_{[\mu,\eta]}(\eta).
\end{align*}
Moreover, 
$$
\rho_{[\mu,\eta]}(\eta)
=\rho_{[\mu,\eta]}(\gamma)
=\frac{V_{\eta,\gamma}}{V_{\mu,\gamma}}=\frac{V_{\eta,\mu}+V_{\eta,\gamma}+1}{V_{\mu,\mu}+V_{\mu,\gamma}}
=\frac{V_{\eta,\mu}+1}{V_{\mu,\mu}}
>\rho_{[\mu,\eta]}(\mu).
$$
This shows that Equation \eqref{Oletus} holds for $\eta$. Since 
$$
\rho_{[\mu,\eta]}(\nu)
=\frac{V_{\mu,\nu}+V_{\gamma,\nu}}{V_{\mu,\nu}}
=1+\rho_{[\mu,\gamma]}(\nu)
$$
for every $\nu\neq\eta$, we see that $\rho_{[\mu,\eta]}$ stays constant on any path going away from $[\mu,\eta]$.
Hence the claim holds for $\rho_{[\mu,\eta]}$, too. 
\end{proof}

\begin{prop}\label{D}
Write $\mathbf 1_i=(\delta_{i,j})_{j\in\Gamma}$. For any vertices $\gamma$, $\mu$ and $\eta$, set
$$
\widehat r_{[\mu,\gamma]}
:=\mathbf 1_\gamma-\rho_{[\mu,\gamma]}(\mu)\mathbf 1_\mu
	\:\:\:\text{ and }\:\:\:
\varphi_\eta(\nu)=\varphi_\eta^{[\mu,\gamma]}(\nu)
:=\frac{\left(\widehat r_{[\mu,\gamma]}V\right)_\nu}{V_{\eta,\nu}}.
$$
Then $\varphi_\eta(\nu)\ge0$, where the inequality is strict if and only if $\nu\in\Gamma^\mu_\gamma$. If $\nu,\nu'\in[\mu,\gamma]$ and 
$d(\mu,\nu)<d(\mu,\nu')$, then
$$
\varphi_\eta(\nu)<\varphi_\eta(\nu').
$$
Further, if $\eta\in[\mu,\gamma]$ then $\varphi_\eta(\nu)$ is constant on any path intersecting $[\mu,\gamma]$ at most on 
one point.
\end{prop}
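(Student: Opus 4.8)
The plan is to express $\varphi_\eta$ entirely through the maps $\rho_{[\cdot,\cdot]}$ and then quote Lemma~\ref{V<V}. First I would note that, by linearity and since $\mathbf 1_\gamma V$ and $\mathbf 1_\mu V$ are the $\gamma$-th and $\mu$-th rows of the (symmetric) matrix $V$, one has $\bigl(\widehat r_{[\mu,\gamma]}V\bigr)_\nu=V_{\gamma,\nu}-\rho_{[\mu,\gamma]}(\mu)V_{\mu,\nu}$. As every entry of $V$ is positive, dividing by $V_{\eta,\nu}$ and factoring out $V_{\mu,\nu}/V_{\eta,\nu}$ gives the two identities
\begin{equation*}
\varphi_\eta(\nu)
=\rho_{[\eta,\mu]}(\nu)\bigl(\rho_{[\mu,\gamma]}(\nu)-\rho_{[\mu,\gamma]}(\mu)\bigr)
=\rho_{[\eta,\gamma]}(\nu)-\rho_{[\mu,\gamma]}(\mu)\,\rho_{[\eta,\mu]}(\nu),
\end{equation*}
the second one using $\rho_{[\eta,\mu]}(\nu)\rho_{[\mu,\gamma]}(\nu)=\rho_{[\eta,\gamma]}(\nu)$.

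For the sign statement I would use the first identity. Since $\rho_{[\eta,\mu]}(\nu)>0$, the sign of $\varphi_\eta(\nu)$ is that of $\rho_{[\mu,\gamma]}(\nu)-\rho_{[\mu,\gamma]}(\mu)$, which by Lemma~\ref{V<V} is $\ge 0$ and vanishes exactly when $[\mu,\nu]\cap[\mu,\gamma]=\{\mu\}$. As $\Gamma^\mu_\gamma$ consists precisely of those $\nu$ for which $[\mu,\nu]$ shares its first edge with $[\mu,\gamma]$, i.e.\ for which $[\mu,\nu]\cap[\mu,\gamma]\supsetneq\{\mu\}$, this shows $\varphi_\eta(\nu)>0$ iff $\nu\in\Gamma^\mu_\gamma$.

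For the monotonicity along $[\mu,\gamma]$ I would use the second identity. Let $\theta$ be the vertex of $[\mu,\gamma]$ closest to $\eta$, so that $[\eta,\nu]=[\eta,\theta]\cup[\theta,\nu]$ for every $\nu\in[\mu,\gamma]$, $[\eta,\theta]$ meets $[\mu,\gamma]$ only in $\theta$, and $[\mu,\gamma]=[\mu,\theta]\cup[\theta,\gamma]$. Since $[\mu,\theta]$ is a subpath of $[\eta,\mu]$ that leaves $[\eta,\gamma]$ at $\theta$ (and symmetrically $[\theta,\gamma]$ is a subpath of $[\eta,\gamma]$ leaving $[\eta,\mu]$ at $\theta$), Lemma~\ref{V<V} yields that on $[\mu,\theta]$ the map $\rho_{[\eta,\gamma]}$ is constant while $\rho_{[\eta,\mu]}$ strictly decreases as the distance from $\mu$ grows, and on $[\theta,\gamma]$ the map $\rho_{[\eta,\mu]}$ is constant while $\rho_{[\eta,\gamma]}$ strictly increases; the degenerate cases $\eta=\mu$ or $\eta=\gamma$ are covered with one of the two segments reduced to a point. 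Because the coefficient $\rho_{[\mu,\gamma]}(\mu)$ is positive, $\varphi_\eta$ is strictly increasing in the distance from $\mu$ on each segment, and since $\theta$ lies in both we get $\varphi_\eta(\nu)<\varphi_\eta(\nu')$ whenever $\nu,\nu'\in[\mu,\gamma]$ and $d(\mu,\nu)<d(\mu,\nu')$.

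For the last assertion let $\eta\in[\mu,\gamma]$ and let $\pi$ be a path meeting $[\mu,\gamma]$ in at most one vertex, say $\zeta$. Every $\nu\in\pi$ has $[\eta,\nu]=[\eta,\zeta]\cup[\zeta,\nu]$ with $[\zeta,\nu]$ touching $[\mu,\gamma]$ only at $\zeta$, so $[\eta,\nu]\cap[\eta,\gamma]$ and $[\eta,\nu]\cap[\eta,\mu]$ depend only on $\zeta,\eta,\gamma,\mu$; by Lemma~\ref{V<V} both $\rho_{[\eta,\gamma]}$ and $\rho_{[\eta,\mu]}$, hence $\varphi_\eta$, are constant on $\pi$. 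I expect the only delicate point to be the bookkeeping in the monotonicity paragraph: extracting, directly from the combinatorial characterization in Lemma~\ref{V<V}, the precise behaviour of $\rho_{[\eta,\mu]}$ on $[\mu,\theta]$ and of $\rho_{[\eta,\gamma]}$ on $[\theta,\gamma]$, and correctly locating the projection vertex $\theta$.
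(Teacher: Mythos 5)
Your proof is correct and follows essentially the same route as the paper's: the same two factorizations of $\varphi_\eta$ in terms of $\rho_{[\cdot,\cdot]}$, then Lemma~\ref{V<V}. The only difference is cosmetic bookkeeping in the monotonicity step, where you introduce the projection vertex $\theta$ explicitly, whereas the paper handles the same dichotomy via the cases $]\mu,\eta]\cap[\mu,\gamma]=\emptyset$ and $]\mu,\eta]\cap[\mu,\gamma]\neq\emptyset$.
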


\begin{proof}
We have 
\begin{equation*}
\varphi_\eta(\nu)
=\rho_{[\eta,\gamma]}(\nu)-\rho_{[\mu,\gamma]}(\mu)\rho_{[\eta,\mu]}(\nu)
=\left(\rho_{[\mu,\gamma]}(\nu)-\rho_{[\mu,\gamma]}(\mu)\right)\rho_{[\eta,\mu]}(\nu)
\end{equation*}
By Lemma \ref{V<V} $\rho_{[\mu,\gamma]}(\nu)\ge\rho_{[\mu,\gamma]}(\mu)$ and thereby also $\varphi_\eta(\nu)\ge0$, where the equality holds 
exactly when $\nu\in\Gamma\smallsetminus\Gamma^\mu_\gamma$.

Suppose $\nu,\nu'\in[\mu,\gamma]$ and $d(\mu,\nu)<d(\mu,\nu')$. If $]\mu,\eta]\cap[\mu,\gamma]=\emptyset$, then $\rho_{[\eta,\mu]}(\nu)$ does 
not depend on $\nu\in[\mu,\gamma]$, and again by Lemma \ref{V<V} we know that $\rho_{[\mu,\gamma]}(\nu)$ is strictly 
increasing on the path going from $\mu$ to $\gamma$, which proves the case. If $]\mu,\eta]\cap[\mu,\gamma]\neq\emptyset$, 
then $\rho_{[\eta,\gamma]}(\nu)$ is strictly increasing on $[\eta,\gamma]$ and $\rho_{[\eta,\mu]}(\nu)$ is strictly decreasing on $[\mu,\eta]$, 
while $\rho_{[\mu,\gamma]}(\mu)$ is a constant. Therefore
$$
\varphi_\eta(\nu)
=\rho_{[\eta,\gamma]}(\nu)-\rho_{[\mu,\gamma]}(\mu)\rho_{[\eta,\mu]}(\nu)
<\rho_{[\eta,\gamma]}(\nu')-\rho_{[\mu,\gamma]}(\mu)\rho_{[\eta,\mu]}(\nu')
=\varphi_\eta(\nu').
$$
The rest is now clear.
\end{proof}

In the sequel, we shall make use of the above especially in situations where we have 
a divisor $F$ with $\xi=\xi_F=\lambda(F,D;\mu)$ for a vertex $\mu$ and we want to modify either $F$ or $D$ 
or both in such a way that we still have $\xi=\lambda(F',D';\mu)\le\lambda(F',D';\nu)$ for every $\nu\in\Gamma$. 
For that we introduce a modified factorization vector: Let $\widehat f,\widehat g,\widehat h\in\mathbb Q^\Gamma$ and let $\mu\in\Gamma$. 
We concentrate on $\mu$ and the vertices adjacent to it and modify $\widehat f$ with $\widehat g$ and $\widehat h$ so that 
we 'bring' factors $\widehat g_i$ from each branch emanating from $\mu$ to the closest vertex adjacent 
to $\mu$ and 'distribute' the factor $\sum_{\nu\sim\mu}\rho_{[\mu,\nu]}(\mu)\widehat h_\nu$ from $\mu$ to the adjacent vertices.

\begin{notation}
Let us write $\widehat f^{\,\mu}_{\langle\, \widehat g\,\rangle[\,\widehat h\,]}$ or, if the vertex $\mu$ is clear from the context, just
$$
\widehat f_{\langle\,\widehat g\,\rangle[\,\widehat h\,]}
:=\widehat f-\sum_{i\sim\mu}\sum_{j\in\Gamma_i^\mu}\widehat g_j\widehat r_{[i,j]}+\sum_{i\sim\mu}\widehat h_i\widehat r_{[\mu,i]}.
$$
If either $\widehat g$ or $\widehat h$ is zero, we may omit it in the notation. Let us also set
$$
\widehat f^\mathcal N:=\widehat f-\sum_{i\in\Gamma}\widehat f_i\widehat r_{[\mu,i]}.
$$
\end{notation}

\begin{rem}\label{Fuf}
Suppose $\widehat F=\widehat f_{\langle\,\widehat g\,\rangle[\,\widehat h\,]}$. Then obviously $\widehat f=\widehat F_{\langle\,-\widehat g\,\rangle[\,-\widehat h\,]}$. Moreover, we have 
$$
\widehat f^\mathcal N=\widehat f_{\langle\,\widehat f\,\rangle [\,-\widehat f_{\langle\,\widehat f\,\rangle}\,]}
	\text{\,,\, i.~e.,\, }
\widehat f^\mathcal N_{[\,\widehat f_{\langle\,\widehat f\,\rangle}\,]}=\widehat f_{\langle\,\widehat f\,\rangle}.
$$
\end{rem}

\begin{lem}\label{dM}
Let $fE=\widehat f\widehat E$ be a divisor. Write $U_i:=\Gamma^\mu_i\smallsetminus\{i\}$. Then
$$
\left(\widehat f_{\langle\, \widehat g\,\rangle[\,\widehat h\,]}\right)_i=
\begin{cases}\displaystyle
\widehat f_\mu-\sum_{\nu\sim\mu}\rho_{[\mu,\nu]}(\mu)\widehat h_\nu&\text{ if }i=\mu\\
\displaystyle\widehat f_i+\widehat h_i+\sum_{j\in U_i}\rho_{[i,j]}(\mu)\widehat g_j&\text{ if }i\sim\mu\\
\widehat f_i-\widehat g_i&\text{ otherwise. }
\end{cases}
$$
Furthermore,
$$
(\widehat f_{\langle\,\widehat g\,\rangle}V)_i=f_i-\sum_{\nu\sim\mu}\sum_{j\in\Gamma_\nu^\mu}\widehat g_j\varphi_\mu^{[\nu,j]}(i)V_{\mu,i}.
$$
It follows that if $\widehat g\ge0$, then $(\widehat f_{\langle\,\widehat g\,\rangle}V)_i\le f_i$, where the equality holds when $d(\mu,i)\le1$, 
or more precisely, the inequality is strict exactly when $i\in\Gamma_j^\nu$ for some $\nu\sim\mu$ and some 
$j\in\Gamma_\nu^\mu$ with $\widehat g_j>0$. 

Similarly, 
$$
(\widehat f_{[\,\widehat h\,]}V)_i=f_i+\sum_{\nu\sim\mu}\widehat h_\nu\varphi_\mu^{[\mu,\nu]}(i)V_{\mu,i},
$$
and if $\widehat h\ge0$, then $(\widehat f_{[\,\widehat h\,]}V)_i\ge f_i$, where the inequality is strict exactly when $i\in\Gamma_\nu^\mu$ for 
such $\nu\sim\mu$ that $\widehat h_\nu>0$.
\end{lem}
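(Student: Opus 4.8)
The plan is a direct coordinate computation; all the geometry has already been packaged into Lemma~\ref{V<V} and Proposition~\ref{D}, so the work is bookkeeping. Throughout I would expand $\widehat r_{[\mu,\gamma]}=\mathbf 1_\gamma-\rho_{[\mu,\gamma]}(\mu)\mathbf 1_\mu$. The one preliminary remark to record is that whenever $i\sim\mu$ and $j\in\Gamma_i^\mu$, the vertex $\mu$ lies off the path $[i,j]$ and is adjacent to its endpoint $i$, so that $[i,i]\cap[i,j]=\{i\}=[i,\mu]\cap[i,j]$; by Lemma~\ref{V<V} this forces $\rho_{[i,j]}(i)=\rho_{[i,j]}(\mu)$, which is why $\rho_{[i,j]}(\mu)$ rather than $\rho_{[i,j]}(i)$ appears in the stated formula.

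For the first (coefficient) formula I would simply take the $i$-th coordinate of
$$
\widehat f_{\langle\,\widehat g\,\rangle[\,\widehat h\,]}=\widehat f-\sum_{i'\sim\mu}\sum_{j\in\Gamma_{i'}^\mu}\widehat g_j\bigl(\mathbf 1_j-\rho_{[i',j]}(i')\mathbf 1_{i'}\bigr)+\sum_{i'\sim\mu}\widehat h_{i'}\bigl(\mathbf 1_{i'}-\rho_{[\mu,i']}(\mu)\mathbf 1_\mu\bigr)
$$
in the three cases $i=\mu$, $i\sim\mu$, $d(\mu,i)\ge 2$, using that $\Gamma$ is a tree so that distinct neighbours of $\mu$ lie in distinct components of $\Gamma\smallsetminus\{\mu\}$. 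For $i=\mu$ only the $\mathbf 1_\mu$-terms of the $\widehat h$-sum survive, giving $\widehat f_\mu-\sum_{\nu\sim\mu}\rho_{[\mu,\nu]}(\mu)\widehat h_\nu$. For $i\sim\mu$, in the double sum only $i'=i$ contributes (and the summand $j=i$ drops since $\widehat r_{[i,i]}=0$), together with the term $\widehat h_i$; substituting $\rho_{[i,j]}(i)=\rho_{[i,j]}(\mu)$ yields $\widehat f_i+\widehat h_i+\sum_{j\in U_i}\rho_{[i,j]}(\mu)\widehat g_j$. For $d(\mu,i)\ge2$ only the summand $j=i$ with $i'$ the neighbour of $\mu$ on $[\mu,i]$ survives, giving $\widehat f_i-\widehat g_i$.

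For the two valuation-vector identities I would multiply the definitions of $\widehat f_{\langle\,\widehat g\,\rangle}$ and of $\widehat f_{[\,\widehat h\,]}$ by $V$ on the right \emph{before} taking coordinates. Then $\widehat fV=f$, while $(\widehat r_{[\mu',\gamma']}V)_i=\varphi_\mu^{[\mu',\gamma']}(i)\,V_{\mu,i}$ is nothing but the definition of $\varphi_\mu^{[\mu',\gamma']}$ rewritten; applying this with the paths $[\nu,j]$ and $[\mu,\nu]$ gives
$$
(\widehat f_{\langle\,\widehat g\,\rangle}V)_i=f_i-\sum_{\nu\sim\mu}\sum_{j\in\Gamma_\nu^\mu}\widehat g_j\,\varphi_\mu^{[\nu,j]}(i)\,V_{\mu,i},\qquad(\widehat f_{[\,\widehat h\,]}V)_i=f_i+\sum_{\nu\sim\mu}\widehat h_\nu\,\varphi_\mu^{[\mu,\nu]}(i)\,V_{\mu,i}.
$$
For the sign claims I would quote Proposition~\ref{D}: $\varphi_\mu^{[\nu,j]}(i)\ge0$ with strict inequality exactly for $i\in\Gamma_j^\nu$, and $\varphi_\mu^{[\mu,\nu]}(i)\ge0$ with strict inequality exactly for $i\in\Gamma_\nu^\mu$; since $V_{\mu,i}>0$ and all summands have a fixed sign, $\widehat g\ge0$ gives $(\widehat f_{\langle\,\widehat g\,\rangle}V)_i\le f_i$ and $\widehat h\ge0$ gives $(\widehat f_{[\,\widehat h\,]}V)_i\ge f_i$, with strictness precisely when some summand is nonzero. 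Finally, $\Gamma_j^\nu\subseteq\Gamma_\nu^\mu\smallsetminus\{\nu\}$ consists only of vertices at distance $\ge2$ from $\mu$, which yields the asserted equality for $d(\mu,i)\le1$.

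There is no real obstacle: the only place requiring care is the coordinate-by-coordinate bookkeeping in the first formula — keeping track of which $\widehat r$-summands land on a fixed coordinate $i$, invoking the tree structure of $\Gamma$, and remembering to replace $\rho_{[i,j]}(i)$ by $\rho_{[i,j]}(\mu)$.
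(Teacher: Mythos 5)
Your proof is correct and follows essentially the same route as the paper: expand $\widehat r_{[\cdot,\cdot]}$ in coordinates and use $\rho_{[i,j]}(i)=\rho_{[i,j]}(\mu)$ (from Lemma~\ref{V<V}) for the first formula, then multiply by $V$ and recognize $(\widehat r_{[\nu,j]}V)_i=\varphi_\mu^{[\nu,j]}(i)V_{\mu,i}$ together with the sign information of Proposition~\ref{D} for the rest. The only cosmetic difference is that you compute coordinate-by-coordinate while the paper rearranges the sum globally before reading off coordinates; the content is identical.
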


\begin{proof}
Recall that $\widehat r_{[i,i]}=0$ for any $i$. Straightforward calculation shows that
\begin{align*}
\widehat f_{\langle\,\widehat g\,\rangle[\,\widehat h\,]}
&=\widehat f-\sum_{i\sim\mu}\sum_{j\in U_i}\widehat g_j(\mathbf 1_j-\rho_{[i,j]}(i)\mathbf 1_i)
	+\sum_{i\sim\mu}\widehat h_i(\mathbf 1_i-\rho_{[\mu,i]}(\mu)\mathbf 1_\mu)\\
&=\widehat f-\sum_{i\sim\mu}\rho_{[\mu,i]}(\mu)\widehat h_i\mathbf 1_\mu
	+\sum_{i\sim\mu}\left(\widehat h_i+\sum_{j\in U_i}\rho_{[i,j]}(\mu)\widehat g_j\right)\mathbf 1_i
	-\sum_{i\sim\mu}\sum_{j\in U_i}\widehat g_j\mathbf 1_j
\end{align*}
as $\rho_{[i,j]}(i)=\rho_{[i,j]}(\mu)$ by Lemma \ref{V<V}. This proves the first assertion. Furthermore, by 
Proposition \ref{D} we observe that
$$
(\widehat f_{\langle\,\widehat g\,\rangle}V)_i
=f_i-\sum_{\nu\sim\mu}\sum_{j\in\Gamma_\nu^\mu}\widehat g_j(\widehat r_{[\nu,j]}V)_i
=f_i-\sum_{\nu\sim\mu}\sum_{j\in\Gamma_\nu^\mu}\widehat g_j\varphi_\mu^{[\nu,j]}(i)V_{\mu,i},
$$
where $\varphi_\mu^{[\nu,j]}(i)\ge0$ is positive if and only if $i\in\Gamma_j^\nu$. Assuming $\widehat g\ge0$, this shows that 
$$
\sum_{\nu\sim\mu}\sum_{j\in\Gamma_\nu^\mu}\widehat g_j\varphi_\mu^{[\nu,j]}(i)V_{\mu,i}>0
$$ 
if and only if $i\in\Gamma_j^\nu$ for some $\nu\sim\mu$ and some $j\in\Gamma_\nu^\mu$ with $\widehat g_j>0$.

Similarly, by Proposition \ref{D} we get
$$
(\widehat f_{[\,\widehat h\,]}V)_i
=f_i+\sum_{\nu\sim\mu}\widehat h_\nu(\widehat r_{[\mu,\nu]}V)_i
=f_i+\sum_{\nu\sim\mu}\widehat h_\nu\varphi_\mu^{[\mu,\nu]}(i)V_{\mu,i},
$$
where $\varphi_\mu^{[\mu,\nu]}(i)>0$ exactly when $i\in\Gamma^\mu_\nu$. Thus, if $\widehat h\ge0$,
$$
\sum_{\nu\sim\mu}\widehat h_\nu\varphi_\mu^{[\mu,\nu]}(i)V_{\mu,i}>0
$$
exactly when $i\in\Gamma^\mu_\nu$ for such $\nu\sim\mu$ that $\widehat h_\nu>0$.
\end{proof}

\begin{lem}\label{dM0}
For any divisor $\widehat f\widehat E$ we have
$$
\widehat f^\mathcal N=\sum_{i\in\Gamma}\rho_{[\mu,i]}(\mu)\widehat f_i\mathbf 1_\mu
	\text{\: and \:}
\left(\widehat f^\mathcal NV\right)_\mu=\left(\widehat fV\right)_\mu.
$$
Furthermore, if for some divisor $\widehat g\widehat E$ holds $\lambda(\widehat f\widehat E,D;\mu)=\lambda(\widehat g\widehat E,D;\mu)$, then
$$
\widehat f^\mathcal N=\widehat g^\mathcal N.
$$
\end{lem}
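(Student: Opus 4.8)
The plan is to unwind the definitions of $\widehat r_{[\mu,i]}$ and $\rho_{[\mu,i]}$ and verify the three assertions in turn; each is a short computation, and the last is a formal consequence of the first two. For the identity $\widehat f^\mathcal N=\sum_{i\in\Gamma}\rho_{[\mu,i]}(\mu)\widehat f_i\mathbf 1_\mu$, I would substitute $\widehat r_{[\mu,i]}=\mathbf 1_i-\rho_{[\mu,i]}(\mu)\mathbf 1_\mu$ into the defining formula $\widehat f^\mathcal N=\widehat f-\sum_{i\in\Gamma}\widehat f_i\widehat r_{[\mu,i]}$; since $\sum_{i\in\Gamma}\widehat f_i\mathbf 1_i=\widehat f$ (the sum running over all of $\Gamma$, including $\mu$, which is harmless because $\rho_{[\mu,\mu]}\equiv1$ and $\widehat r_{[\mu,\mu]}=0$), the copy of $\widehat f$ cancels and only the $\mathbf 1_\mu$-component survives. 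Writing $c:=\sum_{i\in\Gamma}\rho_{[\mu,i]}(\mu)\widehat f_i$, so that $\widehat f^\mathcal N=c\,\mathbf 1_\mu$, the second identity $(\widehat f^\mathcal NV)_\mu=(\widehat fV)_\mu$ follows from $(\widehat f^\mathcal NV)_\mu=c\,V_{\mu,\mu}$ together with $\rho_{[\mu,i]}(\mu)=V_{i,\mu}/V_{\mu,\mu}$ and the reciprocity $V=V^{\textsc t}$: indeed $c\,V_{\mu,\mu}=\sum_{i\in\Gamma}V_{i,\mu}\widehat f_i=\sum_{i\in\Gamma}\widehat f_iV_{\mu,i}=(\widehat fV)_\mu$.

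For the final claim, recall that a divisor $fE=\widehat f\widehat E$ has valuation vector $f=\widehat fV$, so by the definition of $\lambda$ the hypothesis $\lambda(\widehat f\widehat E,D;\mu)=\lambda(\widehat g\widehat E,D;\mu)$ says precisely $(\widehat fV)_\mu=(\widehat gV)_\mu$, the summand $k_\mu+1$ in the numerator and the denominator $d_\mu$ being common to both sides. Since $V_{\mu,\mu}=\sum_\rho q_{\mu,\rho}^2>0$ (as $Q=P^{-1}$ has nonzero rows), the scalar $c$ attached to $\widehat f$ is recovered as $c=(\widehat fV)_\mu/V_{\mu,\mu}$, and likewise the scalar attached to $\widehat g$ is $(\widehat gV)_\mu/V_{\mu,\mu}$; hence the two scalars coincide, and by the first identity $\widehat f^\mathcal N=c\,\mathbf 1_\mu=\widehat g^\mathcal N$.

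I expect no genuine obstacle: the whole argument is bookkeeping. The only points needing a moment's care are the convention $\widehat r_{[\mu,\mu]}=0$ (which lets the index $i=\mu$ be retained in every sum), the symmetry $V=V^{\textsc t}$, and the nonvanishing of $V_{\mu,\mu}$, which is what makes the passage from $(\widehat fV)_\mu$ back to the scalar $c$ legitimate.
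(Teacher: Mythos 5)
Your proof is correct and follows essentially the same route as the paper: unwind the definition of $\widehat r_{[\mu,\gamma]}$ to get the scalar-times-$\mathbf 1_\mu$ form, compute $(\widehat f^\mathcal NV)_\mu$ using $\rho_{[\mu,i]}(\mu)=V_{i,\mu}/V_{\mu,\mu}$, and then observe that equality of $\lambda$-values at $\mu$ is equivalent to equality of $(\cdot\,V)_\mu$, from which $\widehat f^\mathcal N=\widehat g^\mathcal N$ follows by dividing by $V_{\mu,\mu}\neq 0$. The only cosmetic differences are that you phrase the last step through the scalar $c$ rather than the $\mu$-component and that you invoke $V=V^{\textsc t}$ in the second computation, which is not actually needed since $(\widehat fV)_\mu=\sum_i\widehat f_iV_{i,\mu}$ by definition.
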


\begin{proof}
The first equality comes straightforwardly from the definition. A direct calculation 
shows that
$$
\left(\widehat f^\mathcal NV\right)_\mu
=\sum_{i\in\Gamma}\rho_{[\mu,i]}(\mu)\widehat f_i V_{\mu,\mu}
=\sum_{i\in\Gamma}\frac{V_{i,\mu}}{V_{\mu,\mu}}\widehat f_i V_{\mu,\mu}
=\sum_{i\in\Gamma}\widehat f_i V_{i,\mu}
=\left(\widehat fV\right)_\mu,
$$
as wanted.

Suppose next that $\lambda(\widehat f\widehat E,D;\mu)=\lambda(\widehat g\widehat E,D;\mu)$. Then $(\widehat f V)_\mu=(\widehat g V)_\mu$. By the above 
we have $(\widehat f^\mathcal NV)_\mu=(\widehat g^\mathcal NV)_\mu$, and further, $\widehat f^\mathcal N_\mu V_{\mu,\mu}=\widehat g^\mathcal N_\mu V_{\mu,\mu}$. This is to say that $\widehat f^\mathcal N_\mu=\widehat g^\mathcal N_\mu$, 
but then $\widehat f^\mathcal N=\widehat g^\mathcal N.$
\end{proof}

\begin{lem}\label{dM1}
For antinef divisors $fE\neq0$ and $G$ and for any vertex $\nu\in\Gamma$
$$
\lambda(G, \widehat f_{\langle \,\widehat f\,\rangle}\widehat E;\nu)\ge\lambda(G,fE;\nu),
$$
where the equality holds exactly when either $d(\mu,\nu)\le1$ or when $\widehat f_j=0$ for every $j\in\Gamma_\nu^i$, 
where $i$ is the vertex in $[\mu,\nu[$ adjacent to $\mu$.
\end{lem}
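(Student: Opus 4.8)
The plan is to reduce the asserted comparison of $\lambda$-values to the inequality $(\widehat f_{\langle\,\widehat f\,\rangle}V)_\nu\le f_\nu$ already established in Lemma~\ref{dM}, and then to rephrase the equality locus of the latter in the form stated. By the definition of $\lambda$ the two fractions in question are $\lambda(G,\widehat f_{\langle\,\widehat f\,\rangle}\widehat E;\nu)=(g_\nu+k_\nu+1)/(\widehat f_{\langle\,\widehat f\,\rangle}V)_\nu$ and $\lambda(G,fE;\nu)=(g_\nu+k_\nu+1)/f_\nu$; they share the numerator $g_\nu+k_\nu+1$, which is positive, since $g_\nu\ge0$ as $G$ is antinef, and $k_\nu=\sum_\rho q_{\nu,\rho}\ge q_{\nu,\nu}=1$ by~\eqref{K} (the entries of $Q=P^{-1}$ being nonnegative). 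Hence it will be enough to prove $0<(\widehat f_{\langle\,\widehat f\,\rangle}V)_\nu\le f_\nu$, with equality of the $\lambda$-values occurring exactly when $(\widehat f_{\langle\,\widehat f\,\rangle}V)_\nu=f_\nu$. Here $f_\nu>0$ because $fE\neq0$ is antinef. For the other denominator I would invoke the explicit formula for $\widehat f_{\langle\,\widehat f\,\rangle}$ from Lemma~\ref{dM} with $\widehat g=\widehat f$, $\widehat h=0$: the coordinates $\widehat f_\mu$, $\widehat f_i+\sum_{j\in U_i}\rho_{[i,j]}(\mu)\widehat f_j$, and $0$ are all nonnegative since $\widehat f\ge0$ and $\rho_{[i,j]}(\mu)=V_{j,\mu}/V_{i,\mu}>0$; moreover $\widehat f_{\langle\,\widehat f\,\rangle}=0$ would force $\widehat f_\mu=0$ and $\widehat f_j=0$ for every $j\in\Gamma^\mu_i$ and every $i\sim\mu$, i.e.\ $\widehat f=0$, contradicting $fE\neq0$. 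Since all entries of $V$ are positive, $(\widehat f_{\langle\,\widehat f\,\rangle}V)_\nu=\sum_i(\widehat f_{\langle\,\widehat f\,\rangle})_i V_{i,\nu}>0$.

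Next I would read off from Lemma~\ref{dM} (again with $\widehat g=\widehat f\ge0$) that $(\widehat f_{\langle\,\widehat f\,\rangle}V)_\nu\le f_\nu$ with equality whenever $d(\mu,\nu)\le1$, while for $d(\mu,\nu)\ge2$ the inequality is strict exactly when $\nu\in\Gamma^{\eta}_j$ for some $\eta\sim\mu$ and some $j\in\Gamma^\mu_\eta$ with $\widehat f_j>0$ (I have relabelled the neighbour of $\mu$ appearing there as $\eta$, to free up the symbol $\nu$). It remains to check that, with $i$ the vertex of $[\mu,\nu[$ adjacent to $\mu$, this strictness criterion is the same as ``$\widehat f_j>0$ for some $j\in\Gamma^i_\nu$''. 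If $\eta\sim\mu$ and $\eta\neq i$, then $\Gamma^\mu_\eta$ and $\Gamma^\mu_i$ are disjoint branches of $\Gamma\smallsetminus\{\mu\}$, so any path from $\nu\in\Gamma^\mu_i$ to a vertex $j\in\Gamma^\mu_\eta$ passes through $\mu$ and then, if $j\neq\eta$, through $\eta$; as $j=\eta$ gives $\Gamma^\eta_j=\emptyset$, in either case $\nu\notin\Gamma^\eta_j$, so such $\eta$ contribute nothing. For $\eta=i$: since $\nu\in\Gamma^\mu_i\smallsetminus\{i\}$ it lies in a unique component $\Gamma^i_\nu$ of $\Gamma\smallsetminus\{i\}$, and for any $j\in\Gamma^\mu_i$ one has $\nu\in\Gamma^i_j$ if and only if $j\in\Gamma^i_\nu$. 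Hence the inequality is strict precisely when $\widehat f_j>0$ for some $j\in\Gamma^i_\nu$, equivalently equality holds precisely when $\widehat f_j=0$ for all $j\in\Gamma^i_\nu$; combining with the $d(\mu,\nu)\le1$ case gives the assertion.

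There is no genuine obstacle here: the analytic content is entirely carried by Lemma~\ref{dM}, and the only points demanding a little care are confirming that the denominator $(\widehat f_{\langle\,\widehat f\,\rangle}V)_\nu$ is positive (so that the comparison of fractions is legitimate) and the small tree-combinatorial bookkeeping identifying Lemma~\ref{dM}'s strictness set with the branch $\Gamma^i_\nu$.
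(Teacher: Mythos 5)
Your proposal is correct and follows essentially the same route as the paper's own (very terse) proof, which likewise simply cites Lemma~\ref{dM} for the inequality $(\widehat f_{\langle\,\widehat f\,\rangle}V)_\nu\le f_\nu$ and then asserts the reformulation of the equality locus as "the claim is clear." You go further than the paper by actually verifying the two implicit points: that the shared numerator $g_\nu+k_\nu+1$ and the denominator $(\widehat f_{\langle\,\widehat f\,\rangle}V)_\nu$ are strictly positive (so that comparing the $\lambda$-fractions is legitimate and reverses the denominator inequality), and the small tree argument identifying Lemma~\ref{dM}'s strictness set $\{\nu\in\Gamma_j^\eta:\eta\sim\mu,\,j\in\Gamma_\eta^\mu,\,\widehat f_j>0\}$ with the branch $\Gamma_\nu^i$; both checks are correct.
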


\begin{proof}
By Lemma \ref{dM} we know that $\widehat f_{\langle \,\widehat f\,\rangle}V_\nu\le f_\nu$, where the equality holds exactly when 
either $d(\mu,\nu)\le1$ or when $\widehat f_j=0$ for every $j\in\Gamma_\nu^i$, where $i$ is the vertex in $[\mu,\nu[$ adjacent 
to $\mu$. Thus the claim is clear.
\end{proof}

\begin{lem}\label{dM2}
Let $F=fE$ be an antinef divisor and suppose $0\neq\widehat d\in\mathbb Q^\Gamma_{\ge0}$. If
$$
\min_{\nu}\lambda(F,\widehat d_{\langle\widehat d\,\rangle}\widehat E;\nu)=\lambda(F,\widehat d_{\langle\widehat d\,\rangle}\widehat E;\mu),
$$
then we can find an antinef divisor $G$ satisfying
$$
\min_{\nu}\lambda(G,dE;\nu)=\lambda(G,dE;\mu)=\lambda(F,dE;\mu).
$$
\end{lem}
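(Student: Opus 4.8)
The goal is to convert a statement about the modified factorization $\widehat d_{\langle\widehat d\rangle}\widehat E$ back into one about the original $D = dE$, at the cost of replacing the antinef divisor $F$ by another antinef divisor $G$. The plan is to produce $G$ by the reverse of the same ``distribute'' operation applied to $F$ itself, i.e.\ to set $G := (\widehat f_{\langle\widehat f\rangle})\widehat E$ for a suitable factorization vector built from $\widehat f$, or more precisely to combine Lemma~\ref{dM1} with the transforms $\widehat f_{\langle\widehat g\rangle[\widehat h]}$. The key identity to exploit is the symmetry recorded in Lemma~\ref{dM} and Proposition~\ref{D}: the two transforms $\widehat g\mapsto\widehat f_{\langle\widehat g\rangle}$ and $\widehat h\mapsto\widehat f_{[\widehat h]}$ move the valuation vector in opposite directions, and along $[\mu,\nu]$ the functions $\varphi_\mu^{[\nu,j]}$ are monotone, with the crucial feature that nothing changes at distance $\le 1$ from $\mu$. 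In particular $(\widehat f_{\langle\widehat f\rangle}V)_\mu = f_\mu$ by Lemma~\ref{dM0}, so $\lambda(G,dE;\mu)$ and $\lambda(F,dE;\mu)$ will agree once we take $G$ to have valuation vector agreeing with $f$ at $\mu$.

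**Main steps.** First I would unwind the hypothesis: writing $D' := \widehat d_{\langle\widehat d\rangle}\widehat E$, Lemma~\ref{dM} gives $d'_i \le d_i$ for all $i$ with equality when $d(\mu,i)\le 1$, and $d'_\mu = d_\mu$. Hence $\lambda(F,D';\mu) = \lambda(F,D;\mu)$, while $\lambda(F,D';\nu)\ge\lambda(F,D;\nu)$ for all $\nu$ (smaller denominator). This already shows the hypothesis $\min_\nu\lambda(F,D';\nu) = \lambda(F,D';\mu)$ is \emph{stronger} than what one would want for $D$; the problem is that $\lambda(F,D;\nu)$ could dip below $\lambda(F,D;\mu)$ at vertices $\nu$ far from $\mu$ where $d'_\nu < d_\nu$. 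The remedy is to modify $F$ so that its valuation vector is \emph{increased} precisely on those far branches, compensating for the change of denominator. Concretely, I would take $G := \widehat f_{[\widehat h]}\widehat E$ (or a $\langle\cdot\rangle$-transform) for a suitable $\widehat h\ge 0$ supported on the vertices adjacent to $\mu$, chosen so that the increase in $g_\nu = (\widehat f_{[\widehat h]}V)_\nu$ over $f_\nu$, governed by $\varphi_\mu^{[\mu,\cdot]}(\nu)V_{\mu,\nu}$, exactly matches (or dominates) the deficit $d_\nu - d'_\nu$ in the right proportion along each branch. Then one checks, using the monotonicity of $\varphi$ along $[\mu,\nu]$ from Proposition~\ref{D}, that $\lambda(G,D;\nu)\ge\lambda(G,D';\nu) = \lambda(F,D';\nu)\ge\lambda(F,D';\mu) = \lambda(G,D';\mu) = \lambda(G,D;\mu)$, the middle identity because neither $G\mapsto\cdots$ nor $D\mapsto D'$ changes anything at $\mu$ or its neighbors. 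Finally I must confirm $G$ is antinef: since the $[\widehat h]$-transform only adds the nonnegative combination $\sum_i\widehat h_i\widehat r_{[\mu,i]}$ and $\widehat h_i\ge 0$, the factorization vector $\widehat g = \widehat f + \sum_i\widehat h_i\widehat r_{[\mu,i]}$ has $\widehat g_i = \widehat f_i + \widehat h_i \ge 0$ for $i\sim\mu$, $\widehat g_\mu = \widehat f_\mu - \sum_{\nu\sim\mu}\rho_{[\mu,\nu]}(\mu)\widehat h_\nu$ which needs $\widehat h$ small enough, and $\widehat g_j = \widehat f_j\ge 0$ otherwise; so antinefness forces an upper bound on $\widehat h$ that has to be reconciled with the lower bound from the denominator-compensation requirement.

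**The main obstacle.** The delicate point is exactly that reconciliation: showing one can choose $\widehat h\ge 0$ (equivalently the transform parameters) simultaneously large enough to keep $\lambda(G,D;\nu)\ge\lambda(G,D;\mu)$ on every far branch and small enough to keep $\widehat g_\mu\ge 0$ so that $G$ remains antinef. I expect this to come down to the numerics already packaged in Lemma~\ref{hj5} and Lemma~\ref{hj7}: rather than solving the inequalities by hand, I would phrase the construction vertex-by-vertex outward from $\mu$, at each neighbor $i\sim\mu$ invoking Lemma~\ref{hj5}(2) (legitimately, even at an end, by Lemma~\ref{hj7}, using $\widehat d_i > 0$ when $i$ is an end — note $\widehat d\ge 0$ and $\widehat d\ne 0$, so the support of $\widehat d$ provides the needed end-case slack) to pick integers $a_\eta$ realizing the required $\lambda$-values with $w_\Gamma(i)a_i\ge\sum_{\eta\sim i}a_\eta$, and then feeding the resulting data back through the equivalence of \cite[Theorem 1]{HJ} to recognize $\lambda(G,D;\mu)$ as $\xi_G$. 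The transforms $\widehat f_{\langle\widehat g\rangle[\widehat h]}$ are the bookkeeping device that makes ``$G$ has factors only near $\mu$ and $\xi_G = \lambda(F,D;\mu)$'' into an honest divisor; the real content is the existence of the ordered-tree data, which is where the earlier lemmas do the work.
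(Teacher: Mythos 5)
Your eventual strategy -- pick $a_\nu=f_\nu$ near $\mu$, then grow the valuation vector outward along each branch using Lemmas~\ref{hj5} and \ref{hj7} so that the data satisfy \cite[Theorem~1]{HJ} -- is exactly what the paper does. The initial framing via the transform $\widehat f_{[\widehat h]}$ is a detour: the paper never constructs $G$ by such a transform (for good reason, since $\widehat g_\mu=\widehat f_\mu-\sum_\nu\rho_{[\mu,\nu]}(\mu)\widehat h_\nu$ is generically non-integral, and a single $\widehat h_\nu$ cannot match the full profile $d_i-d'_i$ along a whole branch), and you yourself pivot away from it. You also omit the scaling step at the start -- Equation~\eqref{mUm} is invoked to reduce to $\widehat d,\widehat d_{\langle\widehat d\rangle}\in\mathbb N^\Gamma$ -- but that is cosmetic.

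The imprecisions worth flagging concern how Lemma~\ref{hj5} is actually used. You say you would invoke part~(2) at each neighbor of $\mu$, but the paper's construction on a branch containing some $\eta$ with $\widehat d_\eta>0$ follows the path $\mu=\eta_0\sim\cdots\sim\eta_k=\eta$ and uses part~(1) (which allows equality) on the way, reserving part~(2) for paths branching \emph{off} $[\mu,\eta]$; this is what arranges that equalities $\lambda(a_\gamma,\gamma)=\lambda(a_\tau,\tau)$ at an end are only ever forced where $\widehat d_\tau>0$, making Lemma~\ref{hj7} applicable. Relatedly, your justification ``$\widehat d\ge0$ and $\widehat d\ne0$, so the support of $\widehat d$ provides the needed end-case slack'' is too coarse: $\widehat d\ne0$ says nothing about a given end. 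What actually works is the case split the paper makes on each branch $\Gamma_\gamma^\mu$: if $\widehat d$ vanishes on all of it, Lemma~\ref{dM1} gives $\lambda(F,\widehat d_{\langle\widehat d\rangle}\widehat E;\nu)=\lambda(F,dE;\nu)$ there and one simply keeps $a_\nu=f_\nu$ with no appeal to hj5/hj7 at all; only when some $\widehat d_\eta>0$ on the branch does one run the hj5/hj7 machinery, and then the path to $\eta$ localizes the slack exactly where it is available. You gesture at this (by noting where $d'_\nu<d_\nu$) but do not make it precise, which is the genuine content of the proof.
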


\begin{proof}
Observe that for any divisors $U,V\in\Lambda_\mathbb Q$ and for any nonzero $n\in\mathbb Q$,
\begin{equation}\label{mUm}
\lambda(U,V;\nu)=n\lambda(U,nV;\nu).	
\end{equation}
Thus it is not a restriction to assume that both $\widehat d$ and $\widehat d_{\langle\widehat d\,\rangle}$ are in 
$\mathbb N^\Gamma$.

We need to show that there is a suitable vector $a\in\mathbb N^\Gamma$, for which the divisor $G=aE$ 
is as wanted. For $\nu$ with $d(\mu,\nu)\le1$ we set $a_\nu=f_\nu$, so that by Lemma \ref{dM1} we get 
$\lambda(F,\widehat d_{\langle\widehat d\,\rangle}\widehat E;\nu)=\lambda(a_\nu,dE;\nu)$. It follows that $\lambda(a_\nu,dE;\nu)\ge\lambda(a_\mu,dE;\mu)$ and 
$$
\widehat a_\mu:=w_\Gamma(\mu) a_\mu-\sum_{\nu\sim\mu}a_\nu=\widehat f_\mu\ge0.
$$

For any branch $\Gamma_\gamma^\mu$ with $\gamma\sim\mu$ we have two possible cases: either $\widehat d_\eta=0$ for every $\eta\in\Gamma_\gamma^\mu$, or 
$\widehat d_\eta>0$ for some $\eta\in\Gamma_\gamma^\mu$. In the first case, we see by Lemma \ref{dM1} that 
$\lambda(F,\widehat d_{\langle\widehat d\,\rangle}\widehat E;\nu)=\lambda(F,dE;\nu)$ for $\nu\in\Gamma_\gamma^\mu$. Hence we may choose $a_\nu=f_\nu$ for every $\nu\in\Gamma_\gamma^\mu$, 
so that $\lambda(a_\nu,dE;\nu)\ge\lambda(a_\mu,dE;\mu)$ and $\widehat a_\nu:=w_\Gamma(\nu) a_\nu-\sum_{i\sim\nu}a_i\ge0$. In the latter case, 
it may happen that $\lambda(F,dE;\nu)<\lambda(F,dE;\mu)$ for some $\nu\in\Gamma_\gamma^\mu\smallsetminus\{\gamma\}$, so that the integers
$f_\nu$ for $\nu\in\Gamma_\gamma^\mu\smallsetminus\{\gamma\}$ won't do. Therefore we must apply Lemma \ref{hj5} in selecting suitable set 
of integers. If $\lambda(F,dE;\gamma)>\lambda(F,dE;\mu)$, then this would be straightforward, since then 
we could by Lemma \ref{hj5} choose integers $a_\nu$ for $\nu\in\Gamma_\gamma^\mu\smallsetminus\{\gamma\}$ so that $\lambda(a_\nu,dE,\nu)$ strictly 
increases on every path in $\Gamma_\gamma^\mu$ going away from $\gamma$, and $\widehat a_\nu\ge0$. Recall that by Lemma~\ref{hj7} 
we can apply Lemma \ref{hj5}, 2) to end vertices, too. In general, we may by using lemma~\ref{hj5} 
find such integers $a_\nu$ for $\nu\in\Gamma_\gamma^\mu\smallsetminus\{\gamma\}$, that $\lambda(a_\nu,dE,\nu)$ is increasing on the path $[\mu,\eta]$, 
and strictly increases on every path in $\Gamma_\gamma^\mu$ going away from $[\mu,\eta]$, and $\widehat a_\nu\ge0$. This can 
be shown as follows.

Let $\eta\in\Gamma_\gamma^\mu$ be such that $\widehat d_\eta>0$, and write $\mu=\eta_0$ and $\gamma=\eta_1$. We have a path of 
adjacent vertices $\eta_0\sim\cdots\sim\eta_k=\eta$ for some positive integer $k$. Since $\lambda(a_{\eta_1},dE;\eta_1)\ge\lambda(a_{\eta_0},dE;\eta_0)$, 
we may by Lemma \ref{hj5} choose integers $a_\nu$ for $\eta_0\neq\nu\sim\eta_1$ so that $\widehat a_{\eta_1}\ge0$ 
and $\lambda(a_\nu,dE;\nu)\ge\lambda(a_{\eta_1},dE;\eta_1)$, where the equality takes place only if $\nu=\eta_2$. Similarly, 
if $0<i\le k$ and $\lambda(a_{\eta_i},dE;\eta_i)\ge\lambda(a_{\eta_{i-1}},dE;\eta_{i-1})$, we may by Lemma \ref{hj5} choose integers 
$a_\nu$ for $\eta_{i-1}\neq\nu\sim\eta_i$ so that $\widehat a_{\eta_i}\ge0$ and $\lambda(a_\nu,dE;\nu)\ge\lambda(a_{\eta_i},dE;\eta_i)$, where the equality
takes place only if $i<k$ and $\nu=\eta_{i+1}$.

If $\theta\sim\eta_i$ for some $i\in\{1,\dots,k\}$ and $\theta\notin[\mu,\eta]$, then we have 
$$
\lambda(a_{\theta},dE;\theta)>\lambda(a_{\eta_i},dE;\eta_i)\ge\lambda(a_{\mu},dE;\mu).
$$ 
Again by using Lemma \ref{hj5} we may choose integers $a_\nu$ for $\nu\in\Gamma_\theta^{\eta_i}$ so that $\widehat a_\nu\ge 0$ and 
$\lambda(a_\nu,dE;\nu)>\lambda(a_{\theta},dE;\theta)$. Subsequently, by applying Lemma \ref{hj5} (and Lemma \ref{hj7}), we may 
find a collection of non-negative integers which meets the requirements of \cite[Theorem~1]{HJ}. 
Thereby we obtain the desired vector.
\end{proof}

\begin{rem}\label{nUn}
By Equation \eqref{mUm} at the beginning of the proof of Lemma \ref{dM2} we see that 
$\xi\in\mathcal H_\mu^{\fa^n}$ if and only if $n\xi\in\mathcal H_\mu^{\fa}$. Thus we may always consider powers $\fa^n$ with $n\in\mathbb N$ big 
enough to achieve the situation where both $\widehat d$ and $\widehat d_{\langle\widehat d\,\rangle}$ are in $\mathbb N^\Gamma$.
\end{rem}

\begin{lem}\label{b-a}
Let $\fa$ be an ideal with a factorization vector $\widehat d$. Suppose $\fa$ is such that 
$\widehat d_{\langle\widehat d\,\rangle}\in\mathbb N^\Gamma$ and let $\fb$ be the ideal corresponding to it. Then $\xi\in\mathcal H_\mu^\fa$ if and only if $\xi\in\mathcal H_\mu^\fb$.
\end{lem}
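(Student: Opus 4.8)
The plan is to derive the statement directly from Lemmas \ref{dM1} and \ref{dM2}, which were set up precisely for the two implications, together with the parametrization of jumping numbers in Proposition \ref{parametrisointi}. Write $D=dE=\widehat d\widehat E$ for the divisor of $\fa$ and $D'=d'E=\widehat d_{\langle\widehat d\,\rangle}\widehat E$ for the divisor of $\fb$; since $\widehat d\ge0$ forces $\widehat d_{\langle\widehat d\,\rangle}\ge0$ (Lemma \ref{dM}) and the hypothesis grants $\widehat d_{\langle\widehat d\,\rangle}\in\mathbb N^\Gamma$, the divisor $D'$ is a genuine antinef divisor in $\Lambda$, so $\fb$ makes sense. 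The one elementary fact I would record at the outset is that, by the equality case of Lemma \ref{dM}, one has $d'_i=d_i$ whenever $d(\mu,i)\le1$; in particular $d'_\mu=d_\mu$, so that $\lambda(G,D';\mu)=\lambda(G,D;\mu)$ for every divisor $G$. (If $\fa=R$ then $\widehat d=0$, both $\fa$ and $\fb$ have no jumping numbers, and there is nothing to prove, so I may assume $D\neq0$.)

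For the implication $\xi\in\mathcal H_\mu^\fa\Rightarrow\xi\in\mathcal H_\mu^\fb$: by Proposition \ref{parametrisointi} choose an antinef divisor $F=fE$ with $\xi=\lambda(F,D;\mu)=\min_\nu\lambda(F,D;\nu)$. I would then apply Lemma \ref{dM1} with the nonzero antinef divisor $D$ playing the role of ``$fE$'' and $F$ playing the role of ``$G$'': because $\widehat d_{\langle\widehat d\,\rangle}\widehat E=D'$, this gives $\lambda(F,D';\nu)\ge\lambda(F,D;\nu)\ge\xi$ for every $\nu$, while at $\mu$ the equality case yields $\lambda(F,D';\mu)=\lambda(F,D;\mu)=\xi$. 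Hence $\min_\nu\lambda(F,D';\nu)=\xi=\lambda(F,D';\mu)$, and Proposition \ref{parametrisointi} shows that $\xi$ is a jumping number of $\fb$ supported at $\mu$, i.e. $\xi\in\mathcal H_\mu^\fb$.

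For the converse $\xi\in\mathcal H_\mu^\fb\Rightarrow\xi\in\mathcal H_\mu^\fa$: again by Proposition \ref{parametrisointi} pick an antinef $F=fE$ with $\xi=\lambda(F,D';\mu)=\min_\nu\lambda(F,D';\nu)$, that is, $\min_\nu\lambda(F,\widehat d_{\langle\widehat d\,\rangle}\widehat E;\nu)=\lambda(F,\widehat d_{\langle\widehat d\,\rangle}\widehat E;\mu)$. This is exactly the hypothesis of Lemma \ref{dM2} (valid here since $0\neq\widehat d\in\mathbb N^\Gamma\subset\mathbb Q^\Gamma_{\ge0}$), so I obtain an antinef divisor $G$ with $\min_\nu\lambda(G,D;\nu)=\lambda(G,D;\mu)=\lambda(F,D;\mu)$. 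Using $d_\mu=d'_\mu$ we get $\lambda(F,D;\mu)=\lambda(F,D';\mu)=\xi$, hence $\xi=\min_\nu\lambda(G,D;\nu)=\lambda(G,D;\mu)$, and Proposition \ref{parametrisointi} gives $\xi\in\mathcal H_\mu^\fa$.

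In truth there is no serious obstacle: the lemma is essentially a repackaging of Lemmas \ref{dM1} and \ref{dM2}. The only points that need genuine care are checking that the direction of the inequality in Lemma \ref{dM1} is the one useful for the ``only if'' direction, and noticing that the identity $d_\mu=d'_\mu$ (the equality case of Lemma \ref{dM} at $i=\mu$) is precisely what lets the value of $\lambda$ at $\mu$ pass back and forth between $D$ and $D'$; the degenerate case $\fa=R$ should also be disposed of explicitly.
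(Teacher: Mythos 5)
Your proof is correct and follows essentially the same route as the paper: the forward direction rests on the monotonicity from Lemma~\ref{dM} (you invoke it through its corollary Lemma~\ref{dM1}, the paper cites Lemma~\ref{dM} directly, but this is the same estimate), and the converse is precisely Lemma~\ref{dM2}. The extra observations you make — that $d'_\mu=d_\mu$ lets the value of $\lambda$ at $\mu$ pass between $D$ and $D'$, and the dismissal of the degenerate case $\fa=R$ — are sound and slightly more careful than the paper's terse write-up.
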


\begin{proof}
If $\xi\in\mathcal H_\mu^\fa$, then there is an antinef divisor $F$ with $\xi_F=\lambda(F,D;\mu)$. It follows from 
Lemma \ref{dM} that $\xi=\lambda(F,D;\mu)=\lambda(F,\widehat d_{\langle\widehat d\,\rangle}\widehat E;\mu)\le\lambda(F,\widehat d_{\langle\widehat d\,\rangle}\widehat E;\nu)$, as $(\widehat d_{\langle\widehat d\,\rangle} V)_i\le d_i$ where 
the equality holds for $i$ with $d(\mu,i)\le1$. This means that $\xi\in\mathcal H_\mu^\fb$.

If $\xi\in\mathcal H_\mu^\fb$, then $\xi=\lambda(F,\widehat d_{\langle\widehat d\,\rangle};\mu)\le\lambda(F,\widehat d_{\langle\widehat d\,\rangle};\nu)$, then by Lemma \ref{dM2} we have such an 
antinef divisor $G$ that $\xi=\lambda(G,D;\mu)\le\lambda(G,D;\nu)$, which shows that $\xi\in\mathcal H_\mu^\fa$.
\end{proof}

\section{Semigroup of values}

Let $\mathcal{SV}_\mu=(\mathcal{SV}_\mu,+)$ be the submonoid of $\mathbb N$ generated by values $V_{\mu,i}$, $i\in\Gamma$. This is 
called the value semigroup of $v_\mu$. Recall that if $\Gamma$ is the dual graph of the simple ideal 
$\fp_\mu$, then the Zariski exponents are the values of the form $V_{\mu,\tau}$ where $\tau$ is an end (see, e.~g., 
\cite[Remark 6.6]{J}). In general, with any dual graph, we may consider values $V_{\mu,\tau}$ where $\tau$ 
is an end of the graph. We then get the following:
\begin{prop}
Let $\Gamma$ be a dual graph containing $\mu$. As a submonoid of $\mathbb N$, the semigroup 
$\mathcal{SV}_\mu$ is always generated by the set of Zariski exponents of $\mu$, i.~e., the values 
$$
\{V_{\mu,\tau_i}\mid i=0\text{, or }i=1,\dots,g+1 \text{ and } \tau_i\neq\mu\},
$$ 
where $\tau_0$ is the root and the indices $\tau_1,\dots,\tau_{g+1}$ are as in Definition \ref{pair}. In general, we 
may write
$$
\mathcal{SV}_\mu = \left\langle V_{\mu,\tau}\mid v_{\Gamma}(\tau)\le1\right\rangle.
$$
\end{prop}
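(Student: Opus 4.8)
The plan is to prove that $\mathcal{SV}_\mu$ is generated by the values $V_{\mu,\tau}$ with $v_\Gamma(\tau)\le 1$, and more precisely by the Zariski exponents listed. Since $\mathcal{SV}_\mu$ is by definition generated by \emph{all} the values $V_{\mu,i}$, $i\in\Gamma$, it suffices to show that for every vertex $i\in\Gamma$ that is not an end, $V_{\mu,i}$ already lies in the submonoid generated by the $V_{\mu,\tau}$ with $\tau$ an end. I would argue by a double reduction: first by induction on the number of vertices of $\Gamma$ (using the elementary modifications that build up any dual graph from the one-vertex graph, as recalled in the Preliminaries), and then, having fixed $\Gamma$, by reverse induction on the distance from $i$ to the set of ends, or equivalently on the partial order $\subset$.

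The key computational input is Equation~\eqref{PW}, which says $w_\Gamma(\eta)V_{\mu,\eta}=\sum_{i\sim\eta}V_{\mu,i}+\delta_{\mu,\eta}$, together with the fact (Equation~\eqref{P}) that $\widehat E_\mu$ has nonnegative factorization coefficients, so all $V_{\mu,i}>0$. First I would handle $\mu$ itself: when $\mu$ is not an end, $v_\Gamma(\mu)\ge 2$, and $\mu$ is built by a free or satellite blowup; tracking how $V_{\mu,i}$ changes under an elementary modification, one sees $V_{\mu,\mu}$ is a sum of the $V_{\mu,\tau_j}$ over the end vertices $\tau_j$ of the subgraph, which are the Zariski exponents by Remark~\ref{rem2}. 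This is essentially the classical fact that the multiplicity $V_{\mu,\mu}$ of a simple complete ideal is the sum of its Zariski exponents. For a general non-end vertex $i$, I would use \eqref{PW}: if $i$ has valence $\ge 2$ and $i\ne\mu$, pick a neighbor $j$ of $i$ lying \emph{closer} to some end, so that by the inner induction $V_{\mu,j}$ and the $V_{\mu,i'}$ for the other neighbors $i'$ (which are also reachable) are in the monoid; then $w_\Gamma(i)V_{\mu,i}=\sum_{i'\sim i}V_{\mu,i'}$ expresses a positive integer multiple of $V_{\mu,i}$ in the monoid — but this only gives $w_\Gamma(i)V_{\mu,i}$, not $V_{\mu,i}$ itself, so a sharper argument is needed.

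The cleanest route, and the one I would actually carry out, is structural rather than via \eqref{PW} alone: use the monotonicity of $\rho_{[\mu,\gamma]}$ from Lemma~\ref{V<V}. Fix a non-end vertex $i\ne\mu$ and an end $\tau$ with $i\in[\mu,\tau]$ (so $i$ lies on the path from $\mu$ to $\tau$); one shows $V_{\mu,i}$ divides — or more usefully, is a monoid-combination of — the $V_{\mu,\tau'}$ over ends $\tau'$ in the branches hanging off $i$ on the far side of $\mu$. Concretely, I expect the proof reduces to the observation that along any path from $\mu$, the values $V_{\mu,\nu}$ form an increasing sequence each term of which is a nonnegative integer combination of its successors and of the value at the branch point — this is exactly the numerical content of the continued-fraction / Euclidean structure behind the proximity matrix. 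Running the elementary-modification induction: a modification of the first kind adds an end $\nu$ with $V_{\mu,\nu}$ a new generator (or, if $\mu=\nu$, handled as above), while a modification of the second kind splits an edge $\gamma\sim\eta$ by $\nu$, and one checks $V_{\mu,\nu}$ is in the monoid generated by $V_{\mu,\gamma}$ and $V_{\mu,\eta}$, both available by induction, again via \eqref{PW} applied at $\nu$ (which has the small weight $w_\Gamma(\nu)$ making the arithmetic work out because $\nu$ is free or proximate to exactly the two old vertices).

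\textbf{Main obstacle.} The delicate point is precisely that \eqref{PW} gives $w_\Gamma(i)V_{\mu,i}$ in the monoid rather than $V_{\mu,i}$; closing this gap requires either the explicit elementary-modification bookkeeping (showing each \emph{new} vertex's value is a combination of \emph{old} values whose weights have only increased) or invoking the known description of the value semigroup of a divisorial valuation on a regular surface in terms of Zariski exponents. I would present the elementary-modification induction as the self-contained argument, isolating as a sub-lemma the claim ``under $\Gamma\rightsquigarrow\Gamma(\nu,U)$, $V_{\mu,\nu}=\sum_{\gamma\in U}V_{\mu,\gamma}+\delta_{\mu,\nu}\in\mathcal{SV}_\mu$ with the right-hand generators inductively in the monoid,'' which is immediate from \eqref{PW} once one notes $w_\Gamma(\nu)=1$ for the new vertex and that $U=\{\gamma\in\Gamma\mid\gamma\prec\nu\}$ is exactly its neighbor set at the moment of creation. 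The passage to the stated list of Zariski exponents $\{V_{\mu,\tau_i}\}$ is then just Remark~\ref{rem2}, which identifies those $\tau_i$ as the ends of the dual graph of $\mu$.
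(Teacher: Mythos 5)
Your plan and the paper's proof run in opposite directions over the same scaffolding. The paper blows down: it repeatedly removes a weight-one vertex $\eta\neq\mu$, using equation~\eqref{PW} (with $w_\Gamma(\eta)=1$ and $\delta_{\mu,\eta}=0$) to see that $V_{\mu,\eta}=\sum_{\nu\sim\eta}V_{\mu,\nu}$ and hence $\mathcal{SV}_\mu$ is unchanged, until it reaches the minimal dual graph of $\fp_\mu$, and then invokes Lemma~\ref{Zar}. You build up from the root through elementary modifications, observing (via the same weight-one special case of~\eqref{PW}, i.e.\ equation~\eqref{Vi}) that each new vertex $\nu\neq\mu$ satisfies $V_{\mu,\nu}=\sum_{\gamma\in U}V_{\mu,\gamma}$ with $U$ its neighbor set at creation, so $V_{\mu,\nu}$ is a monoid combination of predecessors. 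Both arguments handle all vertices $i\neq\mu$ in essentially the same way, and you correctly diagnose that the $w_\Gamma(i)V_{\mu,i}$-vs.-$V_{\mu,i}$ issue evaporates once one uses the weight at the moment of creation.

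The genuine gap is the vertex $\mu$ itself. Equation~\eqref{Vi} gives $V_{\mu,\mu}=\sum_{\gamma\in U_\mu}V_{\mu,\gamma}+1$, and the $+1$ is exactly what your induction cannot absorb. You dispose of this by appealing to ``the classical fact that the multiplicity $V_{\mu,\mu}$ of a simple complete ideal is the sum of its Zariski exponents,'' but that statement is false as written: for the simple satellite point $\mu$ proximate to the root $\tau_0$ and to a free blowup $\tau_1$, the Zariski exponents are $V_{\mu,\tau_0}=2$, $V_{\mu,\tau_1}=3$, while $V_{\mu,\mu}=6\neq 2+3$. The true statement — that $V_{\mu,\mu}$ lies in the \emph{submonoid} generated by the Zariski exponents — is precisely the substance of the proposition in the simple-ideal case, so citing it is circular within this framework; it is not something one ``sees'' by tracking one elementary modification. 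In the paper this is proved in Lemma~\ref{Zar}, which is where all the real work lives: one tracks the submonoids $\mathcal W_n$ and the conductor-type quantity $\Sigma_n$ along the sequence of pairs $(\gamma_i,\tau_{i+1})$, using the recursion $\gcd\{q_{\mu,\gamma_{n-1}},V_{\mu,\tau_n}\}=q_{\mu,\gamma_n}$ and the identity $V_{\mu,\gamma_n}=q_{\gamma_n,\gamma_{n-1}}V_{\mu,\tau_n}$, ultimately yielding $s^\mu_\gamma s^\mu_\tau=q_{\mu,\gamma}V_{\mu,\tau}=V_{\mu,\mu}$. If you carried out your proposal, the ``sub-lemma'' you would actually have to prove to close the $\mu$-case is Lemma~\ref{Zar}, and the reduction you describe is then identical in content (if reversed in direction) to the paper's blowdown reduction.
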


\begin{proof}
If $\mu$ is the only vertex, then $\mathcal{SV}_\mu=\left\langle V_{\mu,\mu}\right\rangle$, and the claim is clear. Suppose that 
$\Gamma=\Gamma'(\eta,U)$. If there is $\eta'\in\Gamma$ different from $\eta$, for which $w_\Gamma(\eta')=1$, then we may find 
a graph $\Gamma''$ containing $\eta$, for which $\Gamma=\Gamma''(\eta',U')$. Thus we may in this situation choose 
$\eta\neq\mu$. But if $\eta\neq\mu$, then
$$
V_{\mu,\eta}=\sum_{\nu\sim\eta}V_{\mu,\nu},
$$
and so the value semigroup remains unchanged under the blowup. Hence we may assume 
that $\eta=\mu$ is the only vertex with weight one. This means that $\Gamma$ is the dual graph of 
the minimal principalization of the simple ideal corresponding to the vertex $\mu$, and the 
claim follows from Lemma \ref{Zar} below.
\end{proof}

Value semigroups are closely related to the jumping numbers. It follows from \cite[Theorem~6.2]{J}
(see also \cite[Lemma 6.1 and Remark 6.6]{J}) that if $\fa=\fp_\mu$ is simple, then $\xi$ 
is a jumping number of $\fa$ supported at $\mu$ if and only if 
$$
\xi V_{\mu,\mu}-q_{\mu,\gamma}-V_{\mu,\tau}\in \left\langle q_{\mu,\gamma}, V_{\mu,\tau}\right\rangle,
$$
where $(\gamma,\tau)$ is the pair associated to $\mu$ (Definition \ref{pair}). Our aim is to generalize this 
formula. Observe here that the first one of the two generators is not necessarily in $\mathcal{SV}_\mu$, 
but as we shall see in Proposition \ref{Sv} below, $q_{\mu,\gamma}$ is the greatest common divisor of the 
values $V_{\mu,i}$ with $i\in\Gamma_\gamma^\mu$, while $V_{\mu,\tau}$ is that of the values coming from the branch $\Gamma_\tau^\mu$. 

Let $\fa$ be a complete ideal in $R$ with a dual graph $\Gamma$. For vertices $\mu,\nu\in\Gamma$, let us 
define a submonoid $(\mathcal{SV}^\mu_\nu,+)$ of $\mathcal{SV}_\mu$ corresponding to the branch $\Gamma_\nu^\mu$ by setting
$$
\mathcal{SV}^\mu_\nu:=\left\langle V_{\mu,i}\mid i\in\Gamma_\nu^\mu\cup\{\mu\}\right\rangle.
$$
Clearly, $\mathcal{SV}^\mu_\nu=\mathcal{SV}^\mu_\eta$, if $\nu$ and $\eta$ define the same branch. Set $s^\mu_\nu:=\gcd\mathcal{SV}^\mu_\nu$ and write 
$(S^\mu,+)$ for the submonoid generated by these numbers, so that 
\begin{equation}\label{SSS}
S^\mu:=\left\langle s^\mu_\nu\mid\nu\in\Gamma\right\rangle
\end{equation}
Note that if $\mu$ is the only vertex in $\Gamma$, then $S^\mu=\left\langle s^\mu_\mu\right\rangle=\mathbb N$. Otherwise $S^\mu=\left\langle s^\mu_\nu\mid \nu\sim\mu\right\rangle$.

\begin{lem}\label{Zar}
Let $\Gamma$ be the dual graph of the minimal principalization of a simple ideal 
$\fp_\mu$ corresponding to the vertex $\mu$. Let $(\gamma_0,\tau_1),\dots,(\gamma_g,\tau_{g+1})=(\gamma,\tau)$ be the sequence of 
pairs associated to $\mu$ and write $\tau_0=\gamma_0$ for the root. Then
$$
\mathcal{SV}^\mu_\gamma =\left\langle V_{\mu,\tau_0},\dots,V_{\mu,\tau_g}\right\rangle\text{ and }
\mathcal{SV}^\mu_\tau=\left\langle V_{\mu,\tau}\right\rangle.
$$
Furthermore, we have $s^\mu_\gamma=q_{\mu,\gamma}$ and $s^\mu_\tau=V_{\mu,\tau}$, and $s^\mu_\gamma s^\mu_\tau=V_{\mu,\mu}$. The greatest integral 
multiple of $s^\mu_\gamma$ not in $\mathcal{SV}^\mu_\gamma$ is 
$$
\sum_{i=1}^gV_{\mu,\gamma_i}-\sum_{i=0}^gV_{\mu,\tau_i}.
$$
\end{lem}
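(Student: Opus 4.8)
The statement concerns the dual graph $\Gamma$ of the minimal principalization of a simple ideal $\fp_\mu$, with its associated sequence of pairs $(\gamma_0,\tau_1),\dots,(\gamma_g,\tau_{g+1})=(\gamma,\tau)$. The plan is to argue by induction on $g$, the number of stars, exploiting the recursive structure of Definition \ref{pair}: the sequence of pairs associated to $\gamma=\gamma_g$ is $(\gamma_0,\tau_1),\dots,(\gamma_{g-1},\tau_g)$, so the inductive hypothesis applies to the subgraph that is the dual graph of $\fp_\gamma$. First I would establish the claim $\mathcal{SV}^\mu_\tau=\langle V_{\mu,\tau}\rangle$: since $\tau$ is the (unique) end vertex different from the root lying in the branch $\Gamma^\mu_\tau$ and $\mu$ is free and infinitely near to $\tau$, every vertex $i\in\Gamma^\mu_\tau$ has $\mu$ proximate to it along the free chain from $\tau$ to $\mu$, forcing $V_{\mu,i}$ to be a multiple of $V_{\mu,\tau}$; this follows by applying the proximity/reciprocity formulas \eqref{P}, \eqref{PW}, \eqref{Vi} along that chain. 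For the equality $\mathcal{SV}^\mu_\gamma=\langle V_{\mu,\tau_0},\dots,V_{\mu,\tau_g}\rangle$, the key point is that the end vertices of $\Gamma$ lying in $\Gamma^\mu_\gamma$ are exactly $\tau_0,\dots,\tau_g$ (by Remark \ref{rem2}), and by the Proposition characterizing $\mathcal{SV}_\gamma$ in terms of Zariski exponents, $\mathcal{SV}_\gamma=\langle V_{\gamma,\tau_0},\dots,V_{\gamma,\tau_g}\rangle$; then I would transfer this to $v_\mu$ using Lemma \ref{V<V}, whose content $\rho_{[\gamma,\mu]}(\nu)=V_{\mu,\nu}/V_{\gamma,\nu}$ is constant ($=1/\rho_{[\mu,\gamma]}(\gamma)=V_{\mu,\mu}/(V_{\mu,\gamma}+1)$ wait---more precisely $\rho_{[\gamma,\mu]}$ is constant on $\Gamma^\gamma_\mu\ni\tau_0,\dots,\tau_g$) on the whole branch $\Gamma^\gamma_\mu$, so $V_{\mu,\tau_i}=\rho_{[\gamma,\mu]}(\tau_i)\,V_{\gamma,\tau_i}$ with a common proportionality constant, giving $\mathcal{SV}^\mu_\gamma$ as that constant times $\mathcal{SV}_\gamma$; one checks the constant times the gcd picks out the stated generators.

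\textbf{The gcd and product formulas.} Next I would compute $s^\mu_\gamma=\gcd\mathcal{SV}^\mu_\gamma$. By \cite[Theorem 6.2]{J} (or directly from the structure of the Zariski exponents of a simple ideal), the semigroup $\langle V_{\mu,\tau_0},\dots,V_{\mu,\tau_g}\rangle$ --- up to the multiplicative factor relating $v_\mu$ and $v_\gamma$ --- is the value semigroup of $\fp_\gamma$, whose generators have gcd equal to the entry $q_{\gamma,\gamma}$-type quantity; tracking the proportionality constant I expect $\gcd = q_{\mu,\gamma}$. The cleanest route: use that $V$ is symmetric and $V=(P^{\textsc t}P)^{-1}$, together with the known fact (standard in Zariski--Lipman theory, cf. \cite[Proposition 4.3]{J}) that for a simple ideal the value semigroup generators $V_{\mu,\tau_i}$ are pairwise coprime "in blocks" so that their overall gcd after removing $V_{\mu,\mu}$ is precisely $q_{\mu,\gamma}$ --- this is essentially the content that $q_{\mu,\gamma}$ and $V_{\mu,\tau}$ form a minimal system of two generators of $\mathcal{SV}_\mu$ and $q_{\mu,\gamma}\cdot V_{\mu,\tau}=V_{\mu,\mu}$. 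I would deduce $s^\mu_\tau=V_{\mu,\tau}$ immediately from $\mathcal{SV}^\mu_\tau=\langle V_{\mu,\tau}\rangle$, and then $s^\mu_\gamma s^\mu_\tau=q_{\mu,\gamma}V_{\mu,\tau}=V_{\mu,\mu}$, the last equality being the classical relation $q_{\mu,\gamma}V_{\mu,\tau}=V_{\mu,\mu}$ for the associated pair of a simple complete ideal (derivable from $PQ=1$ applied to rows/columns $\mu,\gamma,\tau$, or from \cite[Proposition 4.3]{J}).

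\textbf{The Frobenius-type formula.} For the last assertion, the greatest multiple of $s^\mu_\gamma=q_{\mu,\gamma}$ not in $\mathcal{SV}^\mu_\gamma$, I would again reduce to the known combinatorics of the value semigroup of a simple ideal. Dividing through by $q_{\mu,\gamma}$, the semigroup $\frac{1}{q_{\mu,\gamma}}\mathcal{SV}^\mu_\gamma$ is numerical with gcd $1$, generated by $\frac{V_{\mu,\tau_0}}{q_{\mu,\gamma}},\dots,\frac{V_{\mu,\tau_g}}{q_{\mu,\gamma}}$; these are, up to the uniform scaling between $v_\mu$ and $v_\gamma$, the Zariski exponents of $\fp_\gamma$ itself, whose conductor (Frobenius number plus one) is the classically known quantity $\sum_{i=1}^{g}n_i\beta_i$-type expression --- here equal to $\frac{1}{q_{\mu,\gamma}}\bigl(\sum_{i=1}^g V_{\mu,\gamma_i}-\sum_{i=0}^g V_{\mu,\tau_i}\bigr)+1$. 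So it suffices to verify the identity for the conductor of a simple ideal's value semigroup in the present notation; this is essentially \cite[Proposition 4.3 / Corollary]{J} rephrased, and I would prove it by the same induction on $g$, using at the inductive step the structural relation $V_{\mu,\tau_{g}}=s^\mu_{\tau_g}$, the splitting $V_{\mu,\gamma_g}=V_{\mu,\mu}/(\text{something})$ coming from $\gamma_g\sim\mu$, wait --- more carefully using \eqref{PW} at the star $\gamma_g$ and the inductive conductor formula for $\mathcal{SV}^\gamma_{\gamma_{g-1}}$, rescaled. \textbf{The main obstacle} I anticipate is bookkeeping the two scalings simultaneously: the change of valuation $v_\gamma\rightsquigarrow v_\mu$ (governed by the constant value of $\rho_{[\gamma,\mu]}$ on the branch $\Gamma^\gamma_\mu$) and the division by $q_{\mu,\gamma}=s^\mu_\gamma$, and checking these are compatible so that the telescoping sum $\sum V_{\mu,\gamma_i}-\sum V_{\mu,\tau_i}$ emerges exactly rather than up to a fudge factor. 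Concretely, one must show $V_{\mu,\gamma_i}$ and $V_{\mu,\tau_i}$ for the truncated graph of $\fp_\gamma$ rescale by the same constant as the boundary term, which should follow from Lemma \ref{V<V} applied with endpoints $\mu$ and $\gamma$ since all of $\gamma_0,\dots,\gamma_{g-1},\tau_0,\dots,\tau_g$ lie in $\Gamma^\gamma_\mu$ on paths going away from $[\mu,\gamma]$ (so $\rho_{[\gamma,\mu]}$ is constant there).
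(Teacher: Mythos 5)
Your plan takes a genuinely different route from the paper's, but as outlined it has several concrete gaps.

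\textbf{What the paper does.} The paper builds $\mathcal{SV}^\mu_\gamma$ up ``from the root'': it defines the increasing chain $\mathcal W_n := \langle V_{\mu,i}\mid i\in\Gamma^{\gamma_n}_{\tau_0}\cup\{\gamma_n\}\rangle$ and proves simultaneously, by induction on $n$, that $\mathcal W_{n+1}=\langle\mathcal W_n, V_{\mu,\tau_n}\rangle$, that $\gcd\mathcal W_{n+1}=q_{\mu,\gamma_n}$, and the exact Frobenius-number recursion $\Sigma_{n+1}=\Sigma_n+V_{\mu,\gamma_n}-V_{\mu,\tau_n}$. The inductive step is an explicit computation with a bijection modulo $q_{\gamma_n,\gamma_{n-1}}$, using \cite[Lemma 6.1, Corollary 3.16]{J}. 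Crucially, the final inequality $\Sigma_{n+1}<V_{\mu,\gamma_n}\le V_{\mu,\eta^n_i}$ is what shows the ``trunk'' vertices $\eta^n_i$ (those on the path between consecutive stars) contribute nothing new, so the semigroup is generated by the end-vertex values alone.

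\textbf{Where your plan has gaps.} First, you propose to invoke the Proposition ``$\mathcal{SV}_\gamma$ is generated by the Zariski exponents of $\gamma$'' — but that Proposition is proved in this paper \emph{by} Lemma \ref{Zar}, so this is circular unless you instead cite the corresponding statement in \cite{J} directly (which the paper itself does, at finer granularity, via \cite[Lemma 6.1, Remark 6.6, Corollary 3.16]{J}). Second, and more seriously, your ``uniform rescaling'' argument only covers vertices $i\in\Gamma^\mu_\gamma$ whose path to $\mu$ passes through $\gamma$: for those, Lemma \ref{V<V} indeed gives $\rho_{[\mu,\gamma]}(i)=\rho_{[\mu,\gamma]}(\gamma)$, so $V_{\mu,i}/V_{\gamma,i}$ is a single constant. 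But $\Gamma^\mu_\gamma$ also contains the vertices strictly between $\gamma$ and $\mu$ (in the example of Remark \ref{rem2kuva}, $\eta_8$), and on that segment $\rho_{[\mu,\gamma]}$ is \emph{strictly increasing}, so those $V_{\mu,i}$ are not governed by your scaling constant at all. Yet they are generators of $\mathcal{SV}^\mu_\gamma$, and you give no argument that they are redundant — that redundancy is precisely the content of the paper's $\Sigma_{n+1}<V_{\mu,\gamma_n}$ step, which your outline does not reproduce. You also mis-identify which branch is which ($\tau_0,\dots,\tau_g$ lie in $\Gamma^\mu_\gamma$, not $\Gamma^\gamma_\mu$), and your asserted relation $V_{\mu,\tau_g}=s^\mu_{\tau_g}$ is false ($\tau_g\in\Gamma^\mu_\gamma$, so $s^\mu_{\tau_g}=s^\mu_\gamma=q_{\mu,\gamma}$, not $V_{\mu,\tau_g}$). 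Finally, in the minimal dual graph of $\fp_\mu$ one has $\tau=\tau_{g+1}=\mu$ and hence $\Gamma^\mu_\tau=\emptyset$, so $\mathcal{SV}^\mu_\tau=\langle V_{\mu,\mu}\rangle$ is immediate; your proximity-chain argument for this part is solving a non-issue. The underlying idea of transporting the value semigroup of $\fp_\gamma$ to the $v_\mu$-scale via the constant value of $\rho_{[\mu,\gamma]}$ on $\Gamma^\mu_\gamma$ off-trunk is a reasonable starting point, but you would still need the paper's conductor estimate (or an equivalent) to absorb the trunk vertices, at which point the two approaches converge.
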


\begin{proof}
By Equation \eqref{PW} we have for $\nu\in\Gamma\smallsetminus\{\mu\}$
$$
w_{\Gamma}(\nu)V_{\mu,\nu}=\sum_{i\sim\nu}V_{\mu,i}.
$$
Subsequently, we observe that if $\eta$ is such that $d(\nu,\eta)\le1$ and a nonnegative integer $k$ 
divides $V_{\mu,i}$ for every $i\neq\eta$ with $d(\nu,i)\le1$, then $k$ divides $V_{\mu,\eta}$ also. It follows that 
$$
\left\langle V_{\mu,i}\mid i\in\Gamma_{\tau_{n}}^{\gamma_n}\cup\{\gamma_n\} \right\rangle=\left\langle V_{\mu,\tau_{n}} \right\rangle
$$
as semigroups for any $n=0,\dots,g+1$, ($\gamma_{g+1}:=\mu$), so that $V_{\mu,\tau_{n}}$ is the greatest common 
divisor of the set. Especially, $\mathcal{SV}_\tau^\mu=\left\langle V_{\mu,\tau}\right\rangle$ so that $s^\mu_\tau= V_{\mu,\tau}$. Let us define a submonoid 
$\mathcal W_n$ as
$$
\mathcal W_n:=\left\langle V_{\mu,i}\mid i\in\Gamma_{\tau_0}^{\gamma_{n}}\cup\{\gamma_{n}\}\right\rangle
$$
for $n=0,\dots,g+1$, so that $\mathcal{SV}_\gamma^\mu=\mathcal W_{g+1}$. Let $\Sigma_n$ stand for the greatest integral multiple 
of $\gcd\mathcal W_n$ not in $\mathcal W_n$. Clearly, $\mathcal W_0=\left\langle V_{\mu,\tau_0}\right\rangle$ and $\gcd\mathcal W_0=V_{\mu,\tau_0}$ and $\Sigma_0=-V_{\mu,\tau_0}$. To 
complete our proof it is enough to show that for $n=0,\dots,g$,
\begin{equation}\label{WW}
\mathcal W_{n+1}=\left\langle \mathcal W_n, V_{\mu,\tau_n}\right\rangle, \gcd\mathcal W_{n+1}=q_{\mu,\gamma_n}
	\text{ and }
\Sigma_{n+1}=\Sigma_n+V_{\mu,\gamma_{n}}-V_{\mu,\tau_n}<V_{\mu,\gamma_n}.
\end{equation}

At first, note that the vertices of the set $\left(\Gamma_{\tau_0}^{\gamma_{n+1}}\cup\{\gamma_{n+1}\}\right)\smallsetminus\left(\Gamma_{\tau_0}^{\gamma_{n}}\cup\Gamma_{\tau_n}^{\gamma_n}\right)$ yield a 
path $\gamma_{n}=\eta_0^n\sim\cdots\sim\eta^n_{k_n}=\gamma_{n+1}$ for every $n=0,\dots,g$. Thus if $n=0$, it follows 
from the observation we made at the beginning of our proof that $V_{\mu,\tau_0}$ divides $V_{\mu,\eta^0_1}$ and 
subsequently $V_{\mu,\eta^0_i}$ for every $i=0,\dots,k_0$. Hence $\mathcal W_1=\left\langle V_{\mu,\tau_0}\right\rangle=\left\langle\mathcal W_0, V_{\mu,\tau_0} \right\rangle$ and the 
greatest common divisor of the set is $q_{\mu,\gamma_0}= V_{\mu,\tau_0}$. Moreover, $\Sigma_1=\Sigma_0+V_{\mu,\gamma_0}-V_{\mu,\tau_0}=-V_{\mu,\gamma_0}$ 
so that $\Sigma_1<V_{\mu,\gamma_0}$.

Assume that Equation \eqref{WW} holds if $n<n_0$ for some $n_0\in\{1,\dots,g\}$. Suppose then 
that $n=n_0$. Recall that by \cite[Lemma 6.1]{J} (see also \cite[Remark 6.6]{J}) we have 
\begin{equation}\label{gcd1}
\gcd\{q_{\mu,\gamma_{n-1}},V_{\mu,\tau_{n}}\}=q_{\mu,\gamma_{n}}.
\end{equation}
Again, by the observation at the beginning of the proof, we see that $q_{\mu,\gamma_{n}}$ divides $V_{\mu,\nu}$ 
for every $\nu\neq\eta_1^n$ with $d(\gamma_n,\nu)\le1$, and therefore it divides also $V_{\mu,\eta_1^n}$. Subsequently, it 
divides every $V_{\mu,\eta_i^n}$ with $i=0,\dots,k_n$. This shows that $\gcd\mathcal W_{n+1}=q_{\mu,\gamma_n}.$

Let us next verify that $\Sigma_{n+1}=\Sigma_{n}+V_{\mu,\gamma_n}-V_{\mu,\tau_n}$ is the greatest integral multiple 
of $q_{\mu,\gamma_n}$ not in $\left\langle \mathcal W_n, V_{\mu,\tau_n}\right\rangle$. By \cite[Corollary 3.16]{J} we know that $q_{\mu,\gamma_{n-1}}=q_{\gamma_{n},\gamma_{n-1}}q_{\mu,\gamma_{n}}$. 
Therefore by \cite[Lemma 6.1]{J} we get $V_{\mu,\gamma_{n}}=q_{\gamma_{n},\gamma_{n-1}}V_{\mu,\tau_{n}}$, and so
$$
\Sigma_{n+1}=\Sigma_{n}+(q_{\gamma_n,\gamma_{n-1}}-1)V_{\mu,\tau_{n}},
$$
which is clearly an integral multiple of $q_{\mu,\gamma_n}$.

Let $m\in\mathbb N$ be such that $\Sigma_{n+1}+mq_{\mu,\gamma_n}$ is in $\left\langle \mathcal W_n, V_{\mu,\tau_n}\right\rangle$. Equivalently, we may 
write $\Sigma_{n+1}+mq_{\mu,\gamma_n}=s+tV_{\mu,\tau_{n}}$ for some $s\in\mathcal W_n$ and $t\in\{0,\dots,q_{\gamma_n,\gamma_{n-1}}-1\}$. Now 
$u:=q_{\gamma_n,\gamma_{n-1}}-1-t$ belongs to the same set as $t$ and we may reformulate
$$
s=\Sigma_{n}+uV_{\mu,\tau_{n}}+mq_{\mu,\gamma_n}\in\mathcal W_{n}.
$$
This holds if and only if $uV_{\mu,\tau_{n}}+mq_{\mu,\gamma_n}$ is a positive multiple of $q_{\mu,\gamma_{n-1}}$. Observe that 
the map 
$$
\varphi:u\mapsto\frac{uV_{\mu,\tau_{n}}}{q_{\mu,\gamma_n}} \mod q_{\gamma_n,\gamma_{n-1}}
$$
is a bijection between the sets $\{0,\dots,q_{\gamma_n,\gamma_{n-1}}-1\}$ and $\mathbb Z_{q_{\gamma_n,\gamma_{n-1}}}$ following from the 
fact that $\gcd\{q_{\gamma_n,\gamma_{n-1}},V_{\mu,\tau_{n}}/q_{\mu,\gamma_n}\}=1$. Therefore we may always find an integer $u\in\{0,\dots,q_{\gamma_n,\gamma_{n-1}}-1\}$ 
so that $\varphi(u)=-m\mod q_{\gamma_n,\gamma_{n-1}}$, but then $uV_{\mu,\tau_n}+mq_{\mu,\gamma_n}$ is 
divisible by $q_{\mu,\gamma_{n-1}}$, and it is positive if and only if $m>0$. Thus $\Sigma_{n+1}$ is the greatest 
integral multiple of $q_{\mu,\gamma_{n}}$ which does not belong to $\left\langle \mathcal W_{n},V_{\mu,\tau_n}\right\rangle$.

Finally, let us verify that $\Sigma_{n+1}<V_{\mu,\gamma_n}$ and that $\mathcal W_{n+1}=\left\langle\mathcal W_n, V_{\mu,\tau_n} \right\rangle$. By \eqref{BC} we get 
$P_{\nu}V_\mu=q_{\mu,\nu}$, and since $PQ=1$ we see that $q_{\mu,\nu}=\sum_{i\succ\nu}q_{\mu,i}+\delta_{\mu,\nu}$ and subsequently 
$q_{\mu,\nu}>0$ if $\nu\subset\mu$. Hence $V_{\mu,\nu'}<V_{\mu,\nu}$ if $\nu'\prec\nu\subset\mu$. It follows that $V_{\mu,\nu'}<V_{\mu,\nu}$ if 
$\nu'\subset\nu\subset\mu$. Especially, $V_{\mu,\gamma_{n-1}}<V_{\mu,\tau_n}$, but then 
$$
\Sigma_{n+1}<V_{\mu,\gamma_{n-1}}+V_{\mu,\gamma_n}-V_{\mu,\tau_n}<V_{\mu,\gamma_{n}}\le V_{\mu,\eta^n_i}
$$
for every $i=0,\dots,k_n$. As $\Sigma_{n+1}$ is the greatest integral multiple of $q_{\mu,\gamma_n}$ not in 
$\left\langle\mathcal W_n, V_{\mu,\tau_n} \right\rangle$, we see that $V_{\mu,\eta^n_i}\in \left\langle\mathcal W_n, V_{\mu,\tau_n} \right\rangle$ for every $i=0,\dots,k_n$, but then 
$$
\left\langle\mathcal W_n, V_{\mu,\tau_n} \right\rangle\subset\mathcal W_{n+1}\subset\left\langle\mathcal W_n, V_{\mu,\tau_n} \right\rangle,
$$ 
as wanted. Thus Equation \eqref{WW} holds for any $n=0,\dots,g$, and the claim follows.
\end{proof}

\begin{prop}\label{Sv}
The submonoid $\mathcal{SV}^\mu_\nu$ is generated by those values $V_{\mu,i}$ where $i\in\Gamma_\nu^\mu$ is 
an end. More precisely, if $(\gamma,\tau)$ is the pair associated to $\mu$, then the following holds:
$$
\mathcal{SV}^\mu_\nu=
\begin{cases}
\left\langle V_{\mu,\mu}\right\rangle &\text{ if }\Gamma_\nu^\mu \text{ is posterior to } \mu;\\
\left\langle V_{\mu,\tau}\right\rangle &\text{ if }\nu \in \Gamma_\tau^\mu;\\
\left\langle V_{\mu,i}\mid v_{\Gamma}(i)=1, i\in\Gamma_\gamma^\mu\right\rangle &\text{ if }\nu \in 
\Gamma_\gamma^\mu.
\end{cases}
$$
We have $s^\mu_\nu=q_{\mu,\gamma}$ if $\nu\in\Gamma_\gamma^\mu$, otherwise $s^\mu_\nu$ is the generator of the submonoid $\mathcal{SV}^\mu_\nu$. For 
any $\nu\neq\mu$, the greatest integral multiple of $s^\mu_\nu$ not in $\mathcal{SV}^\mu_\nu$ is
$$
M_\nu=M^\mu_\nu:=\sum_{j\in\Gamma_\nu^\mu}(v_\Gamma(j)-2)V_{\mu,j},
$$
whereas $M_\mu:=-V_{\mu,\mu}$. 
\end{prop}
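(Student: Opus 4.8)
The statement concerns a single branch $\Gamma^\mu_\nu$, and since $\mathcal{SV}^\mu_\nu$, $s^\mu_\nu$ and $M^\mu_\nu$ depend on $\nu$ only through that branch, the plan is to fix a branch $B=\Gamma^\mu_\nu$ and to treat separately the three cases: $B$ posterior to $\mu$, $B=\Gamma^\mu_\tau$ and $B=\Gamma^\mu_\gamma$, where $(\gamma,\tau)$ is the pair associated to $\mu$. These are exhaustive, since $\mu$ has at most two anterior branches and, if $\mu$ is free, $\Gamma^\mu_\tau=\Gamma^\mu_\mu=\emptyset$. The first ingredient I would prove, by induction along elementary modifications, is a multiplicativity property of $Q=P^{-1}$: if $j$ lies in a branch posterior to a vertex $c\subset\mu$ that does not contain $\mu$, then $q_{j,\rho}=q_{j,c}\,q_{c,\rho}$ for every $\rho\subset c$. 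Since $q_{\mu,\rho}\neq0$ forces $\rho\subset\mu$, $q_{j,\rho}\neq0$ forces $\rho\subset j$, and every such common index lies below $c$, this yields
$$
V_{\mu,j}=\sum_\rho q_{\mu,\rho}q_{j,\rho}=q_{j,c}\sum_{\rho\subset c}q_{\mu,\rho}q_{c,\rho}=q_{j,c}\,V_{\mu,c}.
$$
Taking $c=\mu$, every value arising from a branch posterior to $\mu$ is a multiple of $V_{\mu,\mu}$; and following the chain of free vertices issuing from the free vertex of $B$ proximate to $\mu$ one reaches an end $i$ of $B$ with $q_{i,\mu}=1$, so $V_{\mu,i}=V_{\mu,\mu}$. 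Hence $\mathcal{SV}^\mu_\nu=\langle V_{\mu,\mu}\rangle$, already generated by the ends of $B$: this settles the first assertions in the posterior case and shows $s^\mu_\nu=V_{\mu,\mu}$.

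For the two anterior branches I would reduce to Lemma~\ref{Zar}. Blowing down all vertices $j$ with $j\not\subset\mu$ turns $\Gamma$ into the dual graph $\Gamma_\mu$ of the minimal principalization of $\fp_\mu$, and by Remark~\ref{DualG} the surviving entries of the valuation matrix are unchanged. Lemma~\ref{Zar} then gives $\mathcal{SV}^\mu_\tau=\langle V_{\mu,\tau}\rangle$, $\mathcal{SV}^\mu_\gamma=\langle V_{\mu,\tau_0},\dots,V_{\mu,\tau_g}\rangle$, $s^\mu_\tau=V_{\mu,\tau}$, $s^\mu_\gamma=q_{\mu,\gamma}$, and the relation $q_{\mu,\gamma}V_{\mu,\tau}=V_{\mu,\mu}$, which absorbs the value $V_{\mu,\mu}$ that is a priori in $\mathcal{SV}^\mu_\tau$; by Remark~\ref{rem2} the $\gamma_i$ are precisely the stars of $\Gamma_\mu$, each of valence three, and the $\tau_i$ its ends, every remaining vertex of $\Gamma_\mu$ having valence two. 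To pass back to $\Gamma$, note that a vertex of $\Gamma^\mu_\gamma$ (resp.\ $\Gamma^\mu_\tau$) not in $\Gamma_\mu$ lies in a branch posterior to some vertex $c$ of the same branch lying in $\Gamma_\mu$; by the multiplicativity identity its value is a multiple of $V_{\mu,c}$ and hence already lies in the semigroup computed from $\Gamma_\mu$, while following a free chain inside the extra branch produces generators equal to the corresponding $V_{\mu,\tau_i}$. This gives all the semigroup assertions, the values of $s^\mu_\nu$, and the description of $\mathcal{SV}^\mu_\nu$ in terms of $\{V_{\mu,i}\mid v_\Gamma(i)=1,\ i\in\Gamma^\mu_\nu\}$.

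It remains to identify $M_\nu$ with the largest integral multiple of $s^\mu_\nu$ not in $\mathcal{SV}^\mu_\nu$. The key remark is that attaching a branch posterior to a vertex $c\in\Gamma^\mu_\nu\cap\Gamma_\mu$ leaves $\sum_{j\in\Gamma^\mu_\nu}(v_\Gamma(j)-2)V_{\mu,j}$ unchanged: the term $V_{\mu,c}$ produced by $c$'s valence increasing by one cancels the contribution of the new branch, which equals $(V_{\mu,c}/V_{c,c})\sum_j(v_\Gamma(j)-2)V_{c,j}=-V_{\mu,c}$ by the posterior case applied to $c$ (using $V_{\mu,j}=q_{j,c}V_{\mu,c}$, $V_{c,j}=q_{j,c}V_{c,c}$). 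So for the anterior branches one may evaluate the sum in $\Gamma_\mu$: in the case $\Gamma^\mu_\gamma$ the ends $\tau_0,\dots,\tau_g$ contribute $-V_{\mu,\tau_i}$, the stars $\gamma_1,\dots,\gamma_g$ contribute $+V_{\mu,\gamma_i}$ and the rest contribute $0$, so the sum equals $\sum_{i=1}^gV_{\mu,\gamma_i}-\sum_{i=0}^gV_{\mu,\tau_i}$, which Lemma~\ref{Zar} already identifies as the largest multiple of $q_{\mu,\gamma}=s^\mu_\gamma$ missing from $\mathcal{SV}^\mu_\gamma$; in the case $\Gamma^\mu_\tau$ the shape of $\Gamma_\mu$ gives $-V_{\mu,\tau}$. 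For a posterior branch $B$ one argues directly in $\Gamma$: using $\widehat k_j=2-w_\Gamma(j)$ together with the edge-telescoping identities $\sum_{j\in B}w_\Gamma(j)V_{\mu,j}=\sum_{j\in B}\sum_{i\sim j}V_{\mu,i}$ and $\sum_{j\in B}\widehat k_jV_{\mu,j}=k_{\nu_0}V_{\mu,\mu}-k_\mu V_{\mu,\nu_0}$ (the inner edges of $B$ cancel, $\nu_0$ being the vertex of $B$ adjacent to $\mu$), the sum becomes $-k_{\nu_0}V_{\mu,\mu}+(k_\mu+1)V_{\mu,\nu_0}-V_{\mu,\mu}$, and the claim reduces to $k_{\nu_0}=(k_\mu+1)q_{\nu_0,\mu}$, which follows by induction on the construction of $B$ from the recursions $k_\nu=k_a+1$, $k_\nu=k_a+k_b+1$ at free and satellite blowups together with the parallel recursions for the column $q_{\bullet,\mu}$ of $Q$. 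The convention $M_\mu=-V_{\mu,\mu}$ requires nothing.

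I expect the real obstacle to be precisely this bookkeeping — keeping straight which vertices of a branch of $\Gamma$ lie in the auxiliary graph $\Gamma_\mu$, comparing the valences $v_\Gamma$ with the valences computed inside $\Gamma_\mu$, and checking that the discrepancies cancel, i.e.\ that the closed form $\sum_j(v_\Gamma(j)-2)V_{\mu,j}$ is invariant under attaching posterior sub-branches. Once this invariance and the multiplicativity identity are in place, everything else is Lemma~\ref{Zar} and short computations with \eqref{PW}, \eqref{P} and \eqref{K}.
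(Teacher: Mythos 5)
Your overall plan is a genuinely different route from the paper's: where the paper runs a single, uniform induction on elementary blowdowns of a weight-one vertex $\eta\neq\mu$ and shows via Equation~\eqref{PW} that the submonoid and $M_\nu$ are step-by-step invariant, you split into cases by branch type and lean on a multiplicativity identity for $Q=P^{-1}$ plus a global ``attach a posterior branch'' invariance. The posterior branch case is handled cleanly this way (the identity $V_{\mu,j}=q_{j,\mu}V_{\mu,\mu}$, the existence of an end $i$ with $q_{i,\mu}=1$ by following the free chain, and the edge-telescoping computation for $M_\nu$ culminating in $k_{\nu_0}=k_\mu+1$ all check out).

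However, there is a genuine gap in the step ``to pass back to $\Gamma$'' for the anterior branches. You assert that every vertex of $\Gamma^\mu_\gamma$ (or $\Gamma^\mu_\tau$) not in $\Gamma_\mu$ lies in a branch posterior to some $c\in\Gamma_\mu$ that does not contain $\mu$, and then invoke the multiplicativity $V_{\mu,j}=q_{j,c}V_{\mu,c}$. This fails for satellite vertices inserted on an edge of $\Gamma_\mu$ (and for anything subsequently blown up near them). Concretely, take the chain $1\subset 2\subset\mu=3$ with free blowups, so $\Gamma_\mu$ is the path $1-2-3$, and then blow up $4$ at $E_1\cap E_2$; the dual graph becomes $1-4-2-3$. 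Here $4\notin\Gamma_\mu$, yet the only branch of $1$ containing $4$ also contains $\mu$, and the branch of $2$ containing $4$ is anterior to $2$ (it contains $1\subset 2$). So $4$ is in no posterior branch of a $\Gamma_\mu$-vertex avoiding $\mu$, and indeed the multiplicativity yields the wrong value: $q_{4,1}V_{3,1}=2$ and $q_{4,2}V_{3,2}=2$, while $V_{3,4}=3$. The conclusion ``its value is a multiple of $V_{\mu,c}$'' is therefore unsupported for such $j$. The same issue affects your $M_\nu$-invariance, whose justification (``cancels the contribution of the new branch\ldots by the posterior case applied to $c$'') likewise assumes a posterior-branch decomposition of $\Gamma\smallsetminus\Gamma_\mu$ that does not hold in the presence of satellites. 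The fix is cheap but must be made: for a weight-one $\eta\neq\mu$ one has $V_{\mu,\eta}=\sum_{i\sim\eta}V_{\mu,i}$ by \eqref{PW}, which shows the new value is a \emph{sum}, not a multiple, of neighboring values and hence already in the submonoid; and for the $M_\nu$ sum, a satellite $\eta$ contributes $(v_\Gamma(\eta)-2)V_{\mu,\eta}=0$ while leaving the neighbors' valences unchanged, so the formula is invariant under a single blowdown. This is precisely the paper's one-step induction, which you cannot bypass via multiplicativity alone.
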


\begin{proof}
Suppose that $\eta\neq\mu$ is a vertex in $\Gamma$ with $w_\Gamma(\eta)=1$. Let $\Gamma'$ be such that
$\Gamma=\Gamma'(\eta,U)$ and suppose that the claim holds for $\Gamma'$. In the case $\nu\in\Gamma'$, let $\mathcal{SV'}^\mu_\nu$ denote 
the submonoid generated by the values $V_{\mu,i}$ where $i\in(\Gamma')^\mu_\nu\cup\{\mu\}$. Write $M'_\nu$ for the greatest 
integral multiple of $\gcd \mathcal{SV'}^\mu_\nu$ not in $\mathcal{SV'}^\mu_\nu$.
 
As $U$ consists of vertices adjacent to $\eta$ we observe that if $\eta\in\Gamma_\nu^\mu$, then $U\subset\Gamma_\nu^\mu\cup\{\mu\}$. 
By Equation \ref{PW} we then see that if $V_{\mu,i}\in \mathcal{SV'}^\mu_\nu$ for every $i\in U$, then $V_{\mu,\eta}\in \mathcal{SV'}^\mu_\nu$, as 
$w_\Gamma(\eta)=1$ and $\delta_{\mu,\eta}=0$. Thus the submonoid $\mathcal{SV}^\mu_\nu$ is the same as the submonoid $\mathcal{SV'}^\mu_\nu$ 
when $\nu\neq\eta$. Further, $\mathcal{SV}^\mu_\eta=\mathcal{SV}^\mu_\nu$ for any $\nu\in\Gamma_\eta^\mu$, and in the case $\Gamma_\eta^\mu=\{\eta\}$ we see 
that $\mathcal{SV}^\mu_\eta=\mathcal{SV'}^\mu_\mu$, as $\eta$ must be a free blowup of $\mu$, in which case Equation \eqref{PW} yields 
$V_{\mu,\eta}=V_{\mu,\mu}$. Subsequently, the greatest common divisor of the values from a branch 
remains unchanged under the blowup. Obviously, this is also the case with the greatest 
of its integral multiples not in the submonoid.

To see that the formula for $M_\nu$ holds, note first that, $M_\nu=M'_\nu$ if $\eta\notin\Gamma_\nu^\mu$. If $\Gamma_{\nu}^\mu=\{\eta\}$, 
then $v_{\Gamma}(\eta)=1$ and $M_\eta=(v_{\Gamma}(\eta)-2)V_{\mu,\eta}=-V_{\mu,\eta}$, as wanted, since $\mathcal{SV}^\mu_{\eta}=\left\langle V_{\mu,\eta}\right\rangle$ and 
$V_{\mu,\eta}=V_{\mu,\mu}$. Suppose then that $\eta$ is not the only vertex on $\Gamma_\nu^\mu$. As $M_\nu=M_{\nu'}$ for every 
$\nu'\in\Gamma_\nu^\mu$, we may assume that $\nu\neq\eta$. Then
$$
M_\nu=M'_\nu+\Delta_\eta,
$$
where
$$
\Delta_\eta:=(v_\Gamma(\eta)-2)V_{\mu,\eta}-\sum_{\mu\neq i\sim\eta}(v_{\Gamma'}(i)-v_{\Gamma}(i))V_{\mu,i}.
$$
If $\eta$ is free, then $v_{\Gamma}(\eta)=1$, but since there is only one vertex $i$ adjacent to it and $\eta$ is 
not the only vertex on the branch, we see that $i\neq\mu$, $v_{\Gamma}(i)=v_{\Gamma'}(i)+1$ and $V_{\mu,\eta}=V_{\mu,i}$ 
by Equation \eqref{PW}. Therefore $\Delta_\eta=0$. If $\eta$ is not free, then $v_{\Gamma}(\eta)=2$ and $v_{\Gamma}(i)=v_{\Gamma'}(i)$ 
for any $i\sim\eta$. Hence $\Delta_\eta=0$ in any case. This shows that also the numbers $M_\nu$ remain 
unchanged under the blowup.

By the above we may blow down $\eta$, and continuing this way, we may eventually 
assume that $w_{\Gamma}(i)>1$ for every vertex $i\neq\mu$. This is to say that $\Gamma$ is the dual graph of
the minimal principalization of a simple ideal $\fp_\mu$ corresponding to the vertex $\mu$, but this 
case is clear by Lemma \ref{Zar} above.
\end{proof}

\begin{rem}\label{Sgen2}
Because $V_{\mu,\mu}=q_{\mu,\gamma}V_{\mu,\tau}$, we see that $S^\mu=\left\langle q_{\mu,\gamma},V_{\mu,\tau}\right\rangle.$ Furthermore, 
$\gcd\{q_{\mu,\gamma},V_{\mu,\tau}\}=q_{\mu,\mu}$ by \eqref{gcd1}. On the other hand, $q_{\mu,\mu}=1$ as easily seen since $Q=P^{-1}$. 
It follows that $S^\mu$ is always a numerical semigroup, i.~e., a submonoid of $\bbN$ with finite 
complement.
\end{rem}

\section{Main result}

\begin{thm}\label{2}
A positive number $\xi$ is a jumping number in $\mathcal H_\mu^\fa$ exactly when
$$
h^\fa_\mu(\xi):=d_\mu\xi
+(v_\Gamma(\mu)-2)V_{\mu,\mu}-\sum_{\nu\sim\mu}s^\mu_\nu\left\lceil\sum_{i\in\Gamma_\nu^\mu}\frac{\widehat d_i V_{\mu,i}}{s^\mu_\nu}\,\xi\right\rceil^{+}\in S^\mu,
$$
where $S^\mu$ is the submonoid of $\mathbb N$ defined by Equation \eqref{SSS} and $\left\lceil\:\: \right\rceil^{+}$ means rounding up to 
the nearest positive integer.
\end{thm}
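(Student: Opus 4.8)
The plan is to use the modification calculus of Section~3 to pass to an ideal whose factorization sits next to $\mu$, then to read off $\cH_\mu$ from the combinatorial description of jumping numbers in \cite{HJ}, and finally to translate the contribution of each branch emanating from $\mu$ into the value semigroups of Section~4.

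\emph{Reduction.} First I would pass to a power $\fa^n$: by Remark~\ref{nUn} this only rescales $\xi$, and by the same remark one may assume $\widehat d_{\langle\widehat d\,\rangle}\in\bbN^\Gamma$. Lemma~\ref{b-a} then lets me replace $\fa$ by the ideal $\fb$ with factorization vector $\widehat b:=\widehat d_{\langle\widehat d\,\rangle}$, and Lemma~\ref{dM} shows $\widehat b_i=0$ for $d(\mu,i)>1$, that $d_\mu$ is unchanged, and that $\sum_{i\in\Gamma^\mu_\nu}\widehat d_iV_{\mu,i}=\widehat b_\nu V_{\mu,\nu}$ for every $\nu\sim\mu$; since the valuation matrix, the canonical vector, $v_\Gamma(\mu)$, the $s^\mu_\nu$ and $S^\mu$ are untouched, $h^\fa_\mu=h^\fb_\mu$, so it suffices to treat $\fb$. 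A short computation from Lemma~\ref{V<V} — the ratio $V_{\nu,i}/V_{\mu,i}$ is constant on $\Gamma^\mu_\nu$, equal there to $(V_{\mu,\nu}+1)/V_{\mu,\mu}$, while $V_{j,i}/V_{\mu,i}=V_{\mu,j}/V_{\mu,\mu}$ for $j\sim\mu$ off $\Gamma^\mu_\nu$ — then yields
$$b_i=\frac{d_\mu+\widehat b_\nu}{V_{\mu,\mu}}\,V_{\mu,i}\qquad(i\in\Gamma^\mu_\nu,\ \nu\sim\mu),$$
so that on each branch $D$ is, up to a rational scalar, the valuation vector of the simple complete ideal $\fp_\nu$ inside $\Gamma^\mu_\nu$; this is what brings $\mathcal{SV}^\mu_\nu$, Proposition~\ref{Sv} and Lemma~\ref{Zar} into the picture.

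\emph{The combinatorial criterion and the branch contributions.} By \cite[Theorem~1]{HJ}, $\xi\in\cH^\fb_\mu$ if and only if there is a connected $U\ni\mu$ and nonnegative integers $\{a_\eta\mid d(\eta,U)\le1\}$ with $\lambda(a_\eta,\eta)>\xi=\lambda(a_\gamma,\gamma)$ for $\gamma\in U$, $\eta\sim U$, and $w_\Gamma(\gamma)a_\gamma\ge\sum_{\nu\sim\gamma}a_\nu$ for $\gamma\in U$; the first relation forces $a_\mu=d_\mu\xi-k_\mu-1$, hence $d_\mu\xi\in\bbN$. I would decompose $U$ along the branches of $\mu$ as $\{\mu\}\cup\bigcup_{\nu\sim\mu}U_\nu$ with $U_\nu\subseteq\Gamma^\mu_\nu$ connected, condition (ii) at $\mu$ becoming $w_\Gamma(\mu)a_\mu\ge\sum_{\nu\sim\mu}a_\nu$. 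Using Lemmas~\ref{hj5} and~\ref{hj7} (the latter is exactly what allows a piece $U_\nu$ to terminate at an end‑vertex, and only when that vertex carries a factor), Lemma~\ref{3d} (which forces $U_\nu$ to stop at a star or a factor‑vertex, so it is finite), and the divisor modifications of Lemmas~\ref{dM1}–\ref{dM2}, I would determine, for each $\nu\sim\mu$ and each admissible $\xi$, the set $A_\nu(\xi)\subseteq\bbN$ of values $a_\nu$ that occur over all admissible $U_\nu$ and all fillings of $\Gamma^\mu_\nu$ with $\lambda\ge\xi$ and nonnegative factors. By Proposition~\ref{Sv} and the branch scaling above, $A_\nu(\xi)$ is governed by the value semigroup $\mathcal{SV}^\mu_\nu$ and its Frobenius‑type number $M^\mu_\nu=\sum_{j\in\Gamma^\mu_\nu}(v_\Gamma(j)-2)V_{\mu,j}$: its least element is $s^\mu_\nu\bigl\lceil\sum_{i\in\Gamma^\mu_\nu}\frac{\widehat d_iV_{\mu,i}}{s^\mu_\nu}\,\xi\bigr\rceil^{+}+M^\mu_\nu$, the \emph{positive} rounding encoding that $\lambda=\xi$ at $\nu$ is forbidden when $\widehat b_\nu=0$ but permitted — at a saving of one unit — when $\widehat b_\nu>0$.

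\emph{Assembly and main obstacle.} Thus $\xi\in\cH^\fb_\mu$ exactly when $d_\mu\xi-k_\mu-1\in\bbN$ and there are $a_\nu\in A_\nu(\xi)$ with $\sum_{\nu\sim\mu}a_\nu\le w_\Gamma(\mu)(d_\mu\xi-k_\mu-1)$. Since each $A_\nu(\xi)$ is its least element shifted by the submonoid $\mathcal{SV}^\mu_\nu$, the attainable values of $w_\Gamma(\mu)(d_\mu\xi-k_\mu-1)-\sum_{\nu\sim\mu}a_\nu$ form a $\xi$‑dependent translate of $\sum_{\nu\sim\mu}\mathcal{SV}^\mu_\nu$, and by Proposition~\ref{Sv} and Remark~\ref{Sgen2} the relevant monoid is $S^\mu=\langle s^\mu_\nu\mid\nu\sim\mu\rangle=\langle q_{\mu,\gamma},V_{\mu,\tau}\rangle$, with $V_{\mu,\mu}=q_{\mu,\gamma}V_{\mu,\tau}\in S^\mu$. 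Rewriting $w_\Gamma(\mu)(d_\mu\xi-k_\mu-1)$ by means of $\widehat k_\mu=2-w_\Gamma(\mu)$ and the formulas \eqref{P}, \eqref{PW}, and collecting constants via $(v_\Gamma(\mu)-2)V_{\mu,\mu}+\sum_{\nu\sim\mu}M^\mu_\nu=\sum_{j\in\Gamma}(v_\Gamma(j)-2)V_{\mu,j}$, one finds that this translate contains a nonnegative integer precisely when $h^\fb_\mu(\xi)\in S^\mu$, which is the assertion for $\fb$ and hence for $\fa$. The real work — and the step I expect to be the main obstacle — is the middle one: identifying each $A_\nu(\xi)$ as a translate of $\mathcal{SV}^\mu_\nu$ and showing that the two regimes $\widehat b_\nu=0$ and $\widehat b_\nu>0$ are reconciled exactly by the positive rounding $\lceil\,\cdot\,\rceil^{+}$. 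This is precisely what the delicate Lemmas~\ref{hj5}, \ref{hj7} and the semigroup bookkeeping in Lemma~\ref{Zar} were set up to supply.
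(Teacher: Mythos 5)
Your reduction to the ideal $\fb$ with factorization concentrated at distance $\le1$ from $\mu$ mirrors the paper exactly (Remark~\ref{nUn}, Lemma~\ref{b-a}, Lemma~\ref{dM}), and your computation $b_i=\frac{d_\mu+\widehat b_\nu}{V_{\mu,\mu}}V_{\mu,i}$ for $i\in\Gamma^\mu_\nu$ is correct and is a clean way to state what Lemma~\ref{V<V} gives on a branch. The assembly identity $(v_\Gamma(\mu)-2)V_{\mu,\mu}+\sum_{\nu\sim\mu}M^\mu_\nu=\sum_{j\in\Gamma}(v_\Gamma(j)-2)V_{\mu,j}=-(k_\mu+1)$ is also right and is essentially the computation the paper encodes through $\widehat\kappa$ and $\widehat g=\xi\widehat d+\widehat\kappa$.

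The difficulty is in the middle step, which you yourself flag as the ``main obstacle'' and leave unproved, but the problem is sharper than a missing lemma: as stated, the object $A_\nu(\xi)$ you introduce cannot have the structure you attribute to it. You define $A_\nu(\xi)$ as the set of achievable valuation-vector entries $a_\nu$ over admissible choices of $U_\nu$ and fillings of $\Gamma^\mu_\nu$. But when $U_\nu=\emptyset$ the only constraint on $a_\nu$ is $\lambda(a_\nu,\nu)>\xi$, so $A_\nu(\xi)$ contains every sufficiently large integer and is essentially a half-line (plus possibly one extra point coming from $\lambda=\xi$ when $\widehat b_\nu>0$). A half-line is not a translate of $\mathcal{SV}^\mu_\nu$ unless $s^\mu_\nu=1$; in the simplest case of a posterior branch $\Gamma^\mu_\nu=\{\nu\}$ one has $\mathcal{SV}^\mu_\nu=V_{\mu,\mu}\bbN$ with $s^\mu_\nu=V_{\mu,\mu}>1$, so ``$A_\nu(\xi)$ is its least element shifted by $\mathcal{SV}^\mu_\nu$'' already fails there. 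The quantity that actually is a shifted copy of $\mathcal{SV}^\mu_\nu$ is not the valuation-vector entry $a_\nu$ but the branch sum $\sum_{i\in\Gamma^\mu_\nu}\widehat f_i V_{\mu,i}$ taken over factorization vectors $\widehat f$ supported on ends of $\Gamma^\mu_\nu$; those $V_{\mu,i}$ for ends $i$ are precisely the generators in Proposition~\ref{Sv}, and the constraint coming from $\lambda\ge\xi$ translates into a lower bound $>M^\mu_\nu+\widehat d_\nu V_{\mu,\nu}\xi$ (with non-strict when $\widehat d_\nu>0$), which is where the positive rounding comes from. This is exactly the quantity the paper tracks via $\widehat\phi_\nu V_{\mu,\nu}$ and the inequality \eqref{fph}. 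If you replace $A_\nu(\xi)$ by the set of achievable branch sums in the factorization basis, and then observe that $a_\nu$ is a fixed scalar multiple of this sum (by the constancy of $\rho_{[\mu,\nu]}$ on $\Gamma^\mu_\nu$ that you already invoked), your assembly at $\mu$ goes through and you recover the paper's argument in a slightly different bookkeeping; as written, though, the stated structure of $A_\nu(\xi)$ is false, so there is a genuine gap, not merely a step postponed to cited lemmas.
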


\begin{rem}
This result yields a formula for the set of the jumping numbers of $\fa$ supported at $\mu$:
$$
\cH_\mu^\fa =\left\{\frac{t}{d_\mu}\middle | t+(v_{\Gamma}(\mu)-2)V_{\mu,\mu}
-\sum_{\nu\sim\mu}s^\mu_\nu\left\lceil t\sum_{i\in\Gamma_\nu^\mu}\frac{\widehat d_i V_{\mu,i}}{	s^\mu_\nu d_\mu}\right\rceil^{+}\in S^\mu\right\}.
$$
\end{rem}

\begin{rem}
It easily follows from Proposition \ref{Sv} that the numbers $s^\mu_\nu$ and $V_{\mu,\mu}$ present 
in the above formula generate $S^\mu$. Moreover,$$s^\mu_\nu=\gcd\left\{V_{\mu,\tau}\mid v_\Gamma(\tau)=1\text{ and }\tau\in\Gamma_\nu^\mu\right\}$$ for 
every $\nu\sim\mu$.
\end{rem}

\begin{rem}
By Proposition \ref{Sv} $V_{\mu,\mu}\in S^\mu$ and $s^\mu_\nu=V_{\mu,\mu}$ for any $\nu$ such that $\Gamma^\mu_\nu$ is 
posterior to $\mu$. It follows that, for any positive integers $t_\nu$,
$$
\sum_{\nu\sim\mu}s^\mu_\nu t_\nu-(v_\Gamma(\mu)-2)V_{\mu,\mu}\in S^\mu.
$$
Subsequently, if $\xi\in\cH_\mu^\fa$, then $d_\mu\xi\in S^\mu$. Note that the converse is not true: if for 
example the ideal in question is the maximal ideal and $\xi=1$, then $\mu=\tau_0$ is the only 
vertex of the dual graph of our ideal and $d_\mu=1$. Clearly, $S^\mu=\mathbb N$ so that $d_\mu\xi\in S^\mu$, but 
as well known, $1$ is not a jumping number of the maximal ideal, see e.~g. Example \ref{Ex0} 
below.
\end{rem}

\begin{proof}[Proof of Theorem \ref{2}]
Note first that the case $\mu$ is the only vertex of the dual graph is 
trivial and is dealt with in Example \ref{Ex0} below. Thus we may assume $\mu$ has adjacent 
vertices. Obviously, by Remark \ref{DualG} we could also choose any dual graph containing $\mu$ and 
having other vertices, too. To begin with, observe that
$$
h^\fa_\mu(\xi)=\frac{nd_\mu\xi}{n}+(v_{\Gamma}(\mu)-2)V_{\mu,\mu}
-\sum_{\nu\sim\mu}s^\mu_\nu\left\lceil\sum_{i\in\Gamma_\nu^\mu}\frac{n\widehat d_i V_{\mu,i}}{s^\mu_\nu}\cdot\frac{\xi}{n}\right\rceil^{+}\hspace{-3pt}
=h^{\fa^n}_\mu\hspace{-3pt}\left(\textstyle\frac{\xi}{n}\right)
$$
Hence $h^\fa_\mu(\xi)\in S^\mu$ exactly, when $h^{\fa^n}_\mu\hspace{-3pt}\left(\textstyle\frac{\xi}{n}\right)\in S^\mu$. Together with Remark \ref{nUn} this shows 
that, by considering $\fa^n$ with $n$ big enough, we may assume $\widehat d_{\langle\widehat d\,\rangle}\in\mathbb N^\Gamma$.

Let $\fb$ be the ideal having the factorization vector $\widehat d_{\langle\widehat d\,\rangle}$. Note that $\fb$ is a product of 
ideals $\fp_\nu$ where $d(\nu,\mu)\le1$, and we may regard $\Gamma$ as a dual graph and $V$ as a valuation 
matrix of $\fb$ (see Remark \ref{DualG}). According to Lemma \ref{dM}
$$
(\widehat d_{\langle\widehat d\,\rangle} V)_\nu=d_\nu
	\text{ and }
\sum_{i\in\Gamma_\nu^\mu}\left(\widehat d_{\langle\widehat d\,\rangle}\right)_i V_{\mu,i}=\sum_{i\in\Gamma_\nu^\mu}\widehat d_iV_{\mu,i} 
$$
for $\nu$ with $d(\mu,\nu)\le1$. This shows, together with Lemma \ref{b-a}, that we may assume $\fa=\fb$, 
i.~e., $\widehat d_\nu=0$ unless $d(\mu,\nu)\le1$. Thereby 
$$
\sum_{i\in\Gamma_\nu^\mu}\frac{\widehat d_i V_{\mu,i}}{s^\mu_\nu}\,\xi=\frac{\widehat d_\nu V_{\mu,\nu}}{s^\mu_\nu}\,\xi
$$
for every $\nu\sim\mu$.

Set $\widehat g:=\xi\widehat d+\widehat\kappa$, where $\widehat\kappa:=(v_\Gamma(\nu)-2)_{\nu\in\Gamma}$. Observe that because
$$
(1,\dots,1)P^\textsc t P=(w_{\Gamma}(\nu)-v_{\Gamma}(\nu))_{\nu\in\Gamma}
$$ 
and since $\widehat k=(2-w_{\Gamma}(\nu))_{\nu\in\Gamma}$ by \eqref{K}, we get $k_\nu+1=-(\widehat\kappa V)_\nu$. Subsequently, we obtain
$$
\lambda(\widehat g\widehat E, dE;\nu)=\frac{\xi d_\nu+(\widehat\kappa V)_\nu+k_\nu+1}{d_{\nu}}=\xi 
$$
for any $\nu\in\Gamma$. Consider a vector$\widehat \phi:= \widehat g_{\langle\widehat\kappa\rangle[\zeta]}$where $\zeta_\nu:=0$ unless $\nu\sim\mu$ in which case 
\begin{equation*}
\label{KD}
\zeta_\nu
:=\frac{s^\mu_\nu}{V_{\mu,\nu}}\left(\left\lceil\frac{\widehat d_\nu V_{\mu,\nu}}{s^\mu_\nu}\,\xi\right\rceil^+
-\frac{\widehat d_\nu V_{\mu,\nu}}{s^\mu_\nu}\,\xi\right).
\end{equation*}
Let $M_\nu$ be as in Proposition \ref{Sv}. By using Lemma \ref{dM} a direct calculation shows that
\begin{equation}\label{phihmu}
\widehat\phi_\mu=\xi\widehat d_\mu+(v_\Gamma(\mu)-2)-\sum_{\nu\sim\mu}\rho_{[\mu,\nu]}(\mu)\zeta_\nu=\frac{h^\fa_\mu(\xi)}{V_{\mu,\mu}}
\end{equation}
and 
\begin{equation}\label{phnuD}
\widehat\phi_\nu
=\xi\widehat d_\nu+\zeta_\nu+\sum_{i\in\Gamma_\nu^\mu}\rho_{[\nu,i]}(\mu)(v_\Gamma(i)-2)
=\frac{M_\nu}{V_{\mu,\nu}}+\frac{s^\mu_\nu}{V_{\mu,\nu}}\left\lceil\frac{\widehat d_\nu V_{\mu,\nu}}{s^\mu_\nu}\xi\right\rceil^+.
\end{equation}
Observe that since $\widehat\phi=(\xi\widehat d+\widehat\kappa_{\langle\widehat \kappa\,\rangle})_{[\zeta]}$, we have $\widehat\phi_\nu=0$ when $d(\mu,\nu)>1$. Furthermore, 
as $\zeta_\nu\ge0$ where the equality may take place only if $\widehat d_\nu>0$, we know by Lemma \ref{dM} that 
$(\widehat\phi V)_\mu=(\widehat g V)_\mu$ and
$$
(\widehat\phi V)_\nu\ge(\widehat g V)_\nu
$$
for every $\nu$ with $\nu\sim\mu$, where the equality may take place only if $\widehat d_\nu>0$.

Suppose now that $\xi\in\mathcal H_\mu^\fa$. By \cite[Theorem 1 and Lemma 6]{HJ} we have $\widehat f\in\mathbb N^\Gamma$ satisfying 
$\widehat f_\nu>0$ only if $\nu$ is an end different from $\mu$, and further, $\lambda((\widehat f V)_\mu,\mu)=\xi$ and $\lambda((\widehat f V)_\nu,\nu)$ 
is increasing on every path going away from $\mu$. Because also $\lambda((\widehat\phi V)_\mu,\mu)=\lambda((\widehat g V)_\mu,\mu)=\,\xi$, 
we get by applying Lemma \ref{dM0} $\widehat\phi^\mathcal N=\widehat f^\mathcal N$, and further
$$
\widehat\phi^\mathcal N_{\mu}V_{\mu,\mu}
=\left(\widehat\phi^\mathcal N V\right)_{\mu}
=\left(\widehat f^\mathcal N V\right)_{\mu}
=\sum_{\nu\sim\mu}\sum_{i\in\Gamma_\nu^\mu}\widehat f_i V_{\mu,i}.
$$
Thus Equation \eqref{phihmu} gives
$$
h^\fa_\mu(\xi)=\left(\widehat\phi^\mathcal N_{\mu}-\sum_{\nu\sim\mu}\rho_{[\mu,\nu]}(\mu)\widehat\phi_\nu\right) V_{\mu,\mu}
=\sum_{\nu\sim\mu}\left(\sum_{i\in\Gamma_\nu^\mu}\widehat f_i V_{\mu,i}-\widehat\phi_\nu V_{\mu,\nu}\right).
$$
On the other hand, according to Equation \eqref{phnuD} and Proposition \ref{Sv}, $\widehat\phi_\nu V_{\mu,\nu}$ is the least 
integral multiple of $s^\mu_\nu$, for which 
$$
\widehat\phi_\nu V_{\mu,\nu}\ge M_\nu+\widehat d_\nu V_{\mu,\nu}\xi,
$$
where the inequality is strict if $\widehat d_\nu=0$. Observe that Proposition \ref{Sv} yields
$$
M_\nu+\widehat d_\nu V_{\mu,\nu}\xi
=\sum_{j\in\Gamma_\nu^\mu}(v_\Gamma(j)-2)V_{\mu,j}+\widehat d_\nu V_{\mu,\nu}\xi
=(\widehat g_{\langle\widehat\kappa\rangle})_\nu V_{\mu,\nu},
$$ 
where the last equality follows from Lemma \ref{dM}. Since every $V_{\mu,i}$ is divisible by $s^\mu_\nu$ for 
$i\in\Gamma_\nu^\mu$, we see that $h^\fa_\mu(\xi)\in S^\mu$, if for every $\nu\sim\mu$ holds
\begin{equation}
\label{fph}
\sum_{i\in\Gamma_\nu^\mu}\widehat f_i V_{\mu,i}\ge(\widehat g_{\langle\widehat\kappa\rangle})_\nu V_{\mu,\nu}.
\end{equation}
Observe that in the case $\widehat d_\nu=0$ the right hand side is equal to $M_\nu$ which is not in $\mathcal{SV}^\mu_\nu$ 
by Proposition \ref{Sv}, while the left hand side clearly is in $\mathcal{SV}^\mu_\nu$. Thus the inequality must 
be strict in this case.

As we saw above, $\widehat g^\mathcal N = \widehat f^\mathcal N$ and $(\widehat g_{\langle\,\widehat\kappa\,\rangle})_i=0$ unless $d(\mu,i)\le1$. Therefore 
$\widehat g_{\langle\,\widehat\kappa\,\rangle}=\widehat f^\mathcal N_{[\,\widehat g_{\langle\,\widehat\kappa\,\rangle}\,]}$, and $\widehat f_{\langle\,\widehat f\,\rangle}=\widehat f^\mathcal N_{[\,\widehat f_{\langle\,\widehat f\,\rangle}\,]}$ follows from Remark \ref{Fuf}. Recall that for any $\nu\sim\mu$
$$
\lambda((\widehat fV)_\nu,\nu)\ge\xi=\lambda((\widehat gV)_\nu,\nu).
$$ 
Hence, by applying Lemma \ref{dM}, we obtain for any $\nu\sim\mu$
$$
\lambda((\widehat f^\mathcal N_{[\,\widehat f_{\langle\,\widehat f\,\rangle}\,]}V)_\nu,\nu)
=\lambda((\widehat f_{\langle\,\widehat f\,\rangle}V)_\nu,\nu)
 \ge\lambda((\widehat g_{\langle\,\widehat\kappa\,\rangle}V)_\nu,\nu)
=\lambda((\widehat f^\mathcal N_{[\,\widehat g_{\langle\,\widehat\kappa\,\rangle}\,]}V)_\nu,\nu),
$$
so that
\begin{align*}
\left(\widehat f^\mathcal N_{[\,\widehat f_{\langle\,\widehat f\,\rangle}\,]}V\right)_\nu 
&=\left(\widehat f^\mathcal NV\right)_{\nu}+\sum_{i\sim\mu}\left(\widehat f_{\langle\,\widehat f\,\rangle}\right)_i\varphi_\mu^{[\mu,i]}(\nu)V_{\mu,\nu}\\
 \ge
\left(\widehat f^\mathcal N_{[\,\widehat g_{\langle\,\widehat\kappa\,\rangle}\,]}V\right)_\nu
&=\left(\widehat f^\mathcal NV\right)_{\nu}+\sum_{i\sim\mu}\left(\widehat g_{\langle\,\widehat\kappa\,\rangle}\right)_i\varphi_\mu^{[\mu,i]}(\nu)V_{\mu,\nu}
\end{align*}
According to Proposition \ref{D} $\varphi_\mu^{[\mu,i]}(\nu)\ge0$ for $\nu\sim\mu$, where the equality takes place always 
if $\mu\sim i\neq\nu$. Subsequently,
$$
\left(\widehat f_{\langle\,\widehat f\,\rangle}\right)_\nu\varphi_\mu^{[\mu,\nu]}(\nu)V_{\mu,\nu}\\
 \ge
(\widehat g_{\langle\,\widehat\kappa\,\rangle})_\nu\varphi_\mu^{[\mu,\nu]}(\nu)V_{\mu,\nu}
$$
which yields Inequality \eqref{fph}. Thus $h^\fa_\mu(\xi)\in S^\mu$.

Suppose next that $h_\mu^\fa(\xi)\in S^\mu$. Then we may take such non-negative integers $m_\nu$ for 
$\nu\sim\mu$, that 
$$
h_\mu^\fa(\xi)=\sum_{\nu\sim\mu}m_\nu s^\mu_\nu.
$$
Let $w:=\sum_{\nu\sim\mu}(m_\nu s^\mu_\nu/V_{\mu,\nu})\mathbf 1_\nu$ and define $\widehat\psi:=\widehat\phi_{\left[\,w\,\right]}$. Subsequently, we obtain by using 
Equation \eqref{phihmu} and Lemma \ref{dM}
$$
\widehat \psi_\mu=\frac{h_\mu^\fa(\xi)}{V_{\mu,\mu}}-\sum_{\nu\sim\mu}\frac{m_\nu s^\mu_\nu}{V_{\mu,\nu}}\frac{V_{\mu,\nu}}{V_{\mu,\mu}}=0,
$$
while for $\nu\sim\mu$ Equation \eqref{phnuD} together with Lemma \ref{dM} yields
$$
\widehat \psi_\nu
=\frac{s^\mu_\nu}{V_{\mu,\nu}}\left\lceil\frac{\widehat d_\nu V_{\mu,\nu}}{s^\mu_\nu}\,\xi\right\rceil^+
+\frac{M_\nu}{V_{\mu,\nu}}+\frac{m_\nu s^\mu_\nu}{V_{\mu,\nu}}
$$ 
Clearly, $\widehat\psi_\nu=0$ unless $\nu\sim\mu$, and further, since $s^\mu_\nu$ divides $\widehat\psi_\nu V_{\mu,\nu}$ and $\widehat\psi_\nu V_{\mu,\nu}>M_\nu$ 
we observe by Lemma \ref{Sv} that $\widehat\psi_\nu V_{\mu,\nu}\in \mathcal{SV}^\mu_\nu$ for every $\nu\sim\mu$. Therefore we may find 
$\widehat f\in\mathbb N^\Gamma$ with $\widehat f_i>0$ only if $v_\Gamma(i)=1$ and $i\neq\mu$ satisfying
$$
\left(\widehat f_{\langle\,\widehat f\,\rangle}\right)_\nu=\widehat\psi_\nu
$$
for every $\nu\sim\mu$. It follows from Lemma \ref{dM} that for any $\nu$ with $d(\mu,\nu)\le1$
$$
\lambda((\widehat fV)_\nu,\nu)=\lambda((\widehat\psi V)_\nu,\nu)\ge\lambda((\widehat\phi V)_\nu,\nu)\ge \xi,
$$ 
where the equality holds for $\nu=\mu$ and otherwise it may take place only if $\widehat d_\nu>0$. 
Subsequently, by choosing $a_i=\widehat fV_i$ for every $i$ with $d(i,\mu)\le1$, we may by using 
\cite[Lemma~5 and Lemma~6]{HJ} achieve a connected set $U\subset\{\nu\in\Gamma\mid d(\nu,\mu)\le1\}$ and non-negative 
integers $a_i$ for every $i\in\Gamma$ with $d(i,U)\le1$ satisfying the conditions of 
\cite[Theorem~1]{HJ}. Thereby $\xi\in\mathcal H_\mu^\fa$.
\end{proof}

\section{Examples}

\subsubsection*{Cases of low valence}

\begin{exmp}\label{Ex0}
In the case $v_{\Gamma}(\mu)=0$ we have only one vertex $\mu$. Moreover, $\widehat d_\mu>0$ while 
$V_{\mu,\mu}=1$ and thereby $d_{\mu}=\widehat d_\mu$. As the set of vertices adjacent to $\mu$ is empty, the set $S^\mu$ 
is $\mathbb N$. The claim of Theorem \ref{2} now says that $\xi$ is a jumping number if and only if
$$
\widehat d_\mu\xi-2\in\mathbb N.
$$
But this already follows from \cite[Theorem 6.2]{J} and Remark \ref{nUn}.
\end{exmp}

\begin{exmp}\label{Ex1}
Suppose $v_{\Gamma}(\mu)=1$. Let $\nu$ be the vertex adjacent to $\mu$. Then
$$
d_\mu\xi=\widehat d_\mu V_{\mu,\mu}\xi+\sum_{i\in\Gamma_\nu^\mu}s^\mu_\nu\frac{\widehat d_iV_{\mu,i}}{s^\mu_\nu}\xi.
$$
Since $(v_\Gamma(\mu)-2)V_{\mu,\mu}=-V_{\mu,\mu}$, we get by Theorem \ref{2} 
$$
h_\mu^\fa(\xi)=d_\mu\xi-V_{\mu,\mu}-s^\mu_\nu\left\lceil\sum_{i\in\Gamma_\nu^\mu}\frac{\widehat d_iV_{\mu,i}}{s^\mu_\nu}\xi\right\rceil^+
$$
Putting these together shows that 
$$
h_\mu^\fa(\xi)
=(\widehat d_\mu\xi-1)V_{\mu,\mu}
+s^\mu_\nu\sum_{i\in\Gamma_\nu^\mu}\frac{\widehat d_iV_{\mu,i}}{s^\mu_\nu}\xi
-s^\mu_\nu\left\lceil\sum_{i\in\Gamma_\nu^\mu}\frac{\widehat d_iV_{\mu,i}}{s^\mu_\nu}\xi\right\rceil^+ <0
$$ 
always unless $\widehat d_\mu\xi\ge1$. Especially, $\mathcal H_\mu^\fa$ is empty if $\widehat d_\mu=0$.
\end{exmp}

\begin{exmp}\label{Ex2}
Suppose $v_{\Gamma}(\mu)=2$. Since $(v_\Gamma(\mu)-2)V_{\mu,\mu}=0$, we get
\begin{align*}
h_\mu^\fa(\xi)
&=d_\mu\xi-\sum_{\nu\sim\mu}s^\mu_\nu\left\lceil\sum_{i\in\Gamma_\nu^\mu}\frac{\widehat d_iV_{\mu,i}}{s^\mu_\nu}\xi\right\rceil^+\\
&=\sum_{i\in\Gamma}\widehat d_i V_{\mu,i}\xi
	-\sum_{\nu\sim\mu}s^\mu_\nu\left\lceil\sum_{i\in\Gamma_\nu^\mu}\frac{\widehat d_iV_{\mu,i}}{s^\mu_\nu}\xi\right\rceil^+\\
&=\widehat d_\mu V_{\mu,\mu}\xi
	+\sum_{\nu\sim\mu}\sum_{i\in\Gamma_\nu^\mu}\widehat d_i V_{\mu,i}\xi
	-\sum_{\nu\sim\mu}s^\mu_\nu\left\lceil\sum_{i\in\Gamma_\nu^\mu}\frac{\widehat d_iV_{\mu,i}}{s^\mu_\nu}\xi\right\rceil^+\\
&=\widehat d_\mu V_{\mu,\mu}\xi
	+\sum_{\nu\sim\mu}s^\mu_\nu\left(\sum_{i\in\Gamma_\nu^\mu}\frac{\widehat d_i V_{\mu,i}}{s^\mu_\nu}\xi
	-\left\lceil\sum_{i\in\Gamma_\nu^\mu}\frac{\widehat d_iV_{\mu,i}}{s^\mu_\nu}\xi\right\rceil^+\right).
\end{align*}
Suppose $\xi\in\mathcal H_\mu^\fa$. Then $h_\mu^\fa(\xi)\ge0$, which implies that either $\widehat d_\mu>0$ or $\widehat d_\mu=0$ and
$$
\sum_{i\in\Gamma_\nu^\mu}\frac{\widehat d_i V_{\mu,i}}{s^\mu_\nu}\xi=\left\lceil\sum_{i\in\Gamma_\nu^\mu}\frac{\widehat d_iV_{\mu,i}}{s^\mu_\nu}\xi\right\rceil^+
$$
for each $\nu\sim\mu$. Note that the latter is possible only if for each $\nu\sim\mu$ there is such $i\in\Gamma_\nu^\mu$ 
that $\widehat d_i>0$.

Let us then assume that $\widehat d_\mu=0$, and let $\widehat f$ be such that 
\begin{equation}
\label{llx}
\lambda((\widehat f V)_\eta,\eta)\ge\lambda((\widehat f V)_\mu,\mu)=\xi.
\end{equation}
By Equation \eqref{PW} we have $w_\Gamma(\mu)V_{\mu,i}=\sum_{\nu\sim\mu}V_{\nu,i}+\delta_{\mu,i}$ for every $i\in\Gamma$. Moreover, by 
Equations \eqref{P} and \eqref{K}
$$
w_\Gamma(\mu)(k_\mu+1)-\sum_{\nu\sim\mu}(k_\nu+1)=\widehat k_\mu=2-v_\Gamma(\mu)=0.
$$
Thereby
\begin{align*}
	\lambda((\widehat f V)_\mu,\mu)
	&=\frac{(\widehat f V)_\mu+k_\mu+1}{(\widehat d V)_\mu}\\
	&=\frac{w_\Gamma(\mu)\left((\widehat f V)_\mu+k_\mu+1)\right)}{w_\Gamma(\mu)(\widehat d V)_\mu}\\
	&=\frac{\sum_{\nu\sim\mu}(\widehat f V)_\nu+\widehat f_\mu+\sum_{\nu\sim\mu}(k_\nu+1)}{\sum_{\nu\sim\mu}(\widehat d V)_\nu}\\
	&=\frac{(\widehat f V)_{\nu_1}+k_{\nu_1}+1+\widehat f_\mu+(\widehat f V)_{\nu_2}+k_{\nu_2}+1}
				 {\ \ (\widehat d V)_{\nu_1}\ \ \ \ \ \ \ \ +\ \ \ \ \ \ \ \ \ \ (\widehat d V)_{\nu_2}},
\end{align*}
where $\nu_1\sim\mu\sim\nu_2$. Furthermore, since we may assume that 
$$
\lambda((\widehat f V)_{\nu_1},\nu_1)\le\lambda((\widehat f V)_{\nu_2},\nu_2),
$$
this shows that
$$
\lambda((\widehat f V)_{\nu_1},\nu_1)\le\lambda((\widehat f V)_{\mu},\mu)\le\lambda(\widehat f_\mu+(\widehat f V)_{\nu_2},\nu_2),
$$
where the equality holds on the left if and only if it holds on the right. Putting these 
together with \eqref{llx} we observe that $\widehat f_\mu=0$ and
$$
\lambda((\widehat f V)_{\nu_1},\nu_1)=\xi=\lambda((\widehat f V)_{\nu_2},\nu_2).
$$
This is to say that both the vertices adjacent to $\mu$ support $\xi$. This means, informally 
speaking, that $\mu$ doesn't support jumping numbers independently. Especially, $\mathcal H_\mu^\fa$ is 
empty if there is $\nu$ such that $\widehat d_i=0$ whenever $i\in\Gamma_\mu^\nu$.
\end{exmp}

\subsubsection*{Simple ideals}

\begin{exmp}
Suppose $\fa$ is a simple ideal. If $\fa$ is the maximal ideal, then the dual graph 
consists of one vertex, but this case is already discussed in Example \ref{Ex0}. Thus we may 
assume $\fa$ is different from the maximal ideal. Since $\widehat d_i>0$ for only one vertex $i$, the 
examples \ref{Ex1} and \ref{Ex2} show that if $v_\Gamma(\mu)<3$ and $\widehat d_\mu=0$ then $\mathcal H_\mu^\fa$ is empty. Thus we may 
suppose that $\mu$ is a vertex with $\widehat d_\mu=1$ or $v_\Gamma(\mu)=3$. Note that if $\widehat d_\mu=1$, then $v_\Gamma(\mu)<3$ 
and if $v_\Gamma(\mu)=3$ then $\widehat d_\mu=0$.Let $(\gamma,\tau)$ be the pair associated to $\mu$.

Consider first the case $\widehat d_\mu=1$ and $v_\Gamma(\mu)=1$. Then $\Gamma_\gamma^\mu$ is the only branch emanating 
from $\mu$. By using Proposition \ref{Sv} we obtain $s^\mu_\gamma=q_{\mu,\gamma}$, and by applying equation $PQ=1$, 
we see that $q_{\mu,\gamma}=q_{\mu,\mu}=1$. Thus $S^\mu=\mathbb N$, and by Theorem \ref{2}, $\xi\in\mathcal H_\mu^\fa$ if and only if
$$
(\xi-1)d_\mu-1\in\mathbb N,\text{ i.~e., }\xi\in\left\{1+\frac{t+1}{d_\mu}\middle| t\in\mathbb N\right\}.
$$

Consider next the case $\widehat d_\mu=1$ and $v_\Gamma(\mu)=2$. It follows from Proposition \ref{Sv} that 
$s^\mu_\gamma=a:=q_{\mu,\gamma}$. Furthermore, $s^\mu_{\tau}=b:=V_{\mu,\tau}$ and $d_\mu=ab$. Now $S^\mu=\left\langle a, b\right\rangle$, and 
Theorem \ref{2} says that $\xi\in\mathcal H_\mu^\fa$ if and only if
$$
ab\xi-a-b\in \left\langle a, b\right\rangle,\text{ i.~e., }\xi\in\left\{\frac{s+1}{a}+\frac{t+1}{b}\middle| s,t\in\mathbb N\right\}.
$$

In the case $v_\Gamma(\mu)=3$ we see, again by Proposition \ref{Sv}, that $s^\mu_\gamma=a:=q_{\mu,\gamma}$ and 
$s^\mu_{\tau}=b:=V_{\mu,\tau}$ and $V_{\mu,\mu}=ab$, while $s^\mu_{\eta}=V_{\mu,\mu}$, where $\eta$ is the vertex corresponding to $\fa$. 
According to Theorem \ref{2} $\xi\in\mathcal H_\mu^\fa$ if and only if
$$
V_{\mu,\eta}\xi+V_{\mu,\mu}-a-b-V_{\mu,\mu}\left\lceil\frac{V_{\mu,\eta}\xi}{V_{\mu,\mu}}\right\rceil\in\left\langle a, b\right\rangle.
$$
Now $V_{\mu,\eta}/V_{\mu,\mu}=c:=q_{\eta,\mu}$, so the above is equivalent to
$$
\xi-\frac{\left\lceil c\xi\right\rceil-1}{c}=\frac{s+1}{ac}+\frac{t+1}{bc}\text{ for some }s,t\in\mathbb N.
$$ 
Obviously, the equation holds for $\xi+\frac{1}{c}$ if it holds for $\xi$. Subsequently, $\xi\in\mathcal H_\mu^\fa$ if and 
only if
$$
\xi\in\left\{\frac{s+1}{ac}+\frac{t+1}{bc}+\frac{m}{c}\middle|s,t,m\in\mathbb N,\frac{s+1}{ac}+\frac{t+1}{bc}\le\frac{1}{c}\right\}.
$$
Observe that $ac=q_{\eta,\gamma}$ and $bc=V_{\mu,\tau}q_{\eta,\mu}=V_{\eta,\tau}$ by, e.~g., \cite[Proposition 3.13]{J}. 

The above shows that Theorem \ref{2} gives an alternative proof of the formula for jumping 
numbers of a simple ideal (see \cite[Theorem 6.2]{J}).
\end{exmp}

\subsubsection*{General case}

\begin{exmp}\label{generalexample}
Let $\fa=\fp_1^2\fp_2^{\phantom{2}}\fp_3^2\fp_4^{\phantom{2}}\fp_5^3$ be an ideal, where
\begin{align*}
\fp_1&=\langle\overline{\phantom{^|}x^3y^3(x^3-y^2)\,,\,(x^3-y^2)^3+x^{11}\phantom{^|}}\rangle,\phantom{\sum^a}\\
\fp_2&=\langle\overline{\phantom{^|}x^2y^3         \,,\,(x^3-y^2)^2       \phantom{^|}}\rangle,\phantom{\sum^a}\\
\fp_3&=\langle\overline{\phantom{^|}xy^5           \,,\, x^3-y^7          \phantom{^|}}\rangle,\phantom{\sum^a}\\
\fp_4&=\langle\overline{\phantom{^|}x^{10}         \,,\,(x^3-(x-y)^2)^3   \phantom{^|}}\rangle,\phantom{\sum^a}\\
\fp_5&=\langle\overline{\phantom{^|}y^2            \,,\, x-y              \phantom{^|}}\rangle.\phantom{\sum^a}
\end{align*}
The dual graph of principalization of $\fa$ is as follows:

\unitlength=0.92mm
\begin{picture}(70,120)(-6,-27)
\put(-6,86){\makebox(0,0){\textbf{$\Gamma$}:}}
\put(3,24.5){\circle{6}}
\put(2,45){\circle{6}}
\put(2,75){\circle{6}}
\put(20,-15){\circle{6}}
\put(20,0){\circle{6}}
\put(20,15){\circle{6}}
\put(20,30){\circle{6}}
\put(20,45){\circle{6}}
\put(20,60){\circle{6}}
\put(20,75){\circle{6}}
\put(35,0){\circle{6}}
\put(40,15){\circle{6}}
\put(40,30){\circle{6}}
\put(50,0){\circle{6}}
\put(55,45){\circle{6}}
\put(70,30){\circle{6}}
\put(90,0){\circle{6}}
\put(90,15){\circle{6}}
\put(90,30){\circle{6}}
\put(110,30){\circle{6}}
\put(57.13,47.13){\vector(1,1){4}}
\put(22.13,77.13){\vector(1,1){4}}
\put(112.13,32.13){\vector(1,1){4}}
\put(112.13,27.87){\vector(1,-1){4}}
\put(0.87,26.63){\vector(-1,1){4}}
\put(0,24.5){\vector(-1,0){4}}
\put(0.87,22.37){\vector(-1,-1){4}}
\put(22.13,-17.13){\vector(1,-1){4}}
\put(17.87,-17.13){\vector(-1,-1){4}}
\put(23,0){\line(1,0){9}}
\put(20,-3){\line(0,-1){9}}
\put(20,3){\line(0,1){9}}
\put(38,0){\line(1,0){9}}
\put(20,27){\line(0,-1){9}}
\put(20,33){\line(0,1){9}}
\put(20,57){\line(0,-1){9}}
\put(20,63){\line(0,1){9}}
\put(23,30){\line(1,0){14}}
\put(73,30){\line(1,0){14}}
\put(93,30){\line(1,0){14}}
\put(40,27){\line(0,-1){9}}
\put(90,3){\line(0,1){9}}
\put(90,27){\line(0,-1){9}}
\put(17,75){\line(-1,0){12}}
\put(17,45){\line(-1,0){12}}
\put(42.13,32.13){\line(1,1){10.5}}
\put(67.87,32.13){\line(-1,1){10.5}}
\put(17.3,29.1){\line(-3,-1){11.5}}
\begin{tiny}
\put(3,24.5){\makebox(0,0){$34$}}
\put(2,45){\makebox(0,0){$34$}}
\put(2,75){\makebox(0,0){$70$}}
\put(20,-15){\makebox(0,0){$119$}}
\put(20,0){\makebox(0,0){$117$}}
\put(20,15){\makebox(0,0){$37$}}
\put(20,30){\makebox(0,0){$31$}}
\put(20,45){\makebox(0,0){$68$}}
\put(20,60){\makebox(0,0){$139$}}
\put(20,75){\makebox(0,0){$210$}}
\put(40,15){\makebox(0,0){$39$}}
\put(40,30){\makebox(0,0){$78$}}
\put(55,45){\makebox(0,0){$164$}}
\put(70,30){\makebox(0,0){$85$}}
\put(90,0){\makebox(0,0){$87$}}
\put(90,15){\makebox(0,0){$174$}}
\put(90,30){\makebox(0,0){$261$}}
\put(110,30){\makebox(0,0){$263$}}
\put(35,0){\makebox(0,0){$78$}}
\put(50,0){\makebox(0,0){$39$}}
\put(24,34){\makebox(0,0){$\gamma_1$}}
\put(44,19){\makebox(0,0){$\gamma_2$}}
\put(46,30){\makebox(0,0){$\gamma_3$}}
\put(74,34){\makebox(0,0){$\gamma_4$}}
\put(94,4){\makebox(0,0){$\gamma_5$}}
\put(94,19){\makebox(0,0){$\gamma_6$}}
\put(94,34){\makebox(0,0){$\gamma_7$}}
\put(116,30){\makebox(0,0){$\gamma_8$}}
\put(51,49){\makebox(0,0){$\gamma_9$}}
\put(24,19){\makebox(0,0){$\gamma_{10}$}}
\put(54,4){\makebox(0,0){$\gamma_{11}$}}
\put(39,4){\makebox(0,0){$\gamma_{12}$}}
\put(24,4){\makebox(0,0){$\gamma_{13}$}}
\put(24,-11){\makebox(0,0){$\gamma_{14}$}}
\put(6,49){\makebox(0,0){$\gamma_{15}$}}
\put(24,49){\makebox(0,0){$\gamma_{16}$}}
\put(6,79){\makebox(0,0){$\gamma_{17}$}}
\put(24,64){\makebox(0,0){$\gamma_{18}$}}
\put(26,75){\makebox(0,0){$\gamma_{19}$}}
\put(5,30){\makebox(0,0){$\gamma_{20}$}}
\end{tiny}
\end{picture}

\noindent
The factorization vector of $\fa$ is 
$$
(0,0,0,0,0,0,0,\mathbf{2},\mathbf{1},0,0,0,0,\mathbf{2},0,0,0,0,\mathbf{1},\mathbf{3}),
$$ 
and
\vspace{3pt}
$$\left[
\begin{smallmatrix}
\vspace{3pt}
\hspace{1.5pt}\phantom{\widehat t}1\hspace{4pt}	&\hspace{4pt}1\hspace{4pt}	&\hspace{4pt}2\hspace{4pt}	&\hspace{4pt}2
\hspace{4pt}	&\hspace{4pt}2\hspace{4pt}	&\hspace{4pt}4\hspace{4pt}	&\hspace{4pt}6\hspace{4pt}	&
\hspace{4pt}6\hspace{4pt}	&\hspace{4pt}4\hspace{4pt}	&\hspace{4pt}1\hspace{4pt}	&\hspace{4pt}1
\hspace{4pt}	&\hspace{4pt}2\hspace{4pt}	&\hspace{4pt}3\hspace{4pt}	&\hspace{4pt}3\hspace{4pt}	&
\hspace{4pt}1\hspace{4pt}	&\hspace{4pt}2\hspace{4pt}	&\hspace{4pt}2\hspace{4pt}	&\hspace{4pt}4
\hspace{4pt}	&\hspace{4pt}6\hspace{4pt}	&\hspace{4pt}1\hspace{4pt}\\
\vspace{3pt}1	&2	&3	&3	&3	&6	&9	&9	&6	&1	&1	&2	&3	&3	&1	&2	&2	&4	&6	&1\\
\vspace{3pt}2	&3	&6	&6	&6	&12	&18	&18	&12	&2	&2	&4	&6	&6	&2	&4	&4	&8	&12	&2\\
\vspace{3pt}2	&3	&6	&7	&7	&14	&21	&21	&13	&2	&2	&4	&6	&6	&2	&4	&4	&8	&12	&2\\
\vspace{3pt}2	&3	&6	&7	&8	&15	&22	&22	&13	&2	&2	&4	&6	&6	&2	&4	&4	&8	&12	&2\\
\vspace{3pt}4	&6	&12	&14	&15	&30	&44	&44	&26	&4	&4	&8	&12	&12	&4	&8	&8	&16	&24	&4\\
\vspace{3pt}6	&9	&18	&21	&22	&44	&66	&66	&39	&6	&6	&12	&18	&18	&6	&12	&12	&24	&36	&6\\
\vspace{3pt}6	&9	&18	&21	&22	&44	&66	&67	&39	&6	&6	&12	&18	&18	&6	&12	&12	&24	&36	&6\\
\vspace{3pt}4	&6	&12	&13	&13	&26	&39	&39	&26	&4	&4	&8	&12	&12	&4	&8	&8	&16	&24	&4\\
\vspace{3pt}1	&1	&2	&2	&2	&4	&6	&6	&4	&2	&2	&4	&6	&6	&1	&2	&2	&4	&6	&1\\
\vspace{3pt}1	&1	&2	&2	&2	&4	&6	&6	&4	&2	&3	&5	&7	&7	&1	&2	&2	&4	&6	&1\\
\vspace{3pt}2	&2	&4	&4	&4	&8	&12	&12	&8	&4	&5	&10	&14	&14	&2	&4	&4	&8	&12	&2\\
\vspace{3pt}3	&3	&6	&6	&6	&12	&18	&18	&12	&6	&7	&14	&21	&21	&3	&6	&6	&12	&18	&3\\
\vspace{3pt}3	&3	&6	&6	&6	&12	&18	&18	&12	&6	&7	&14	&21	&22	&3	&6	&6	&12	&18	&3\\
\vspace{3pt}1	&1	&2	&2	&2	&4	&6	&6	&4	&1	&1	&2	&3	&3	&2	&3	&3	&6	&9	&1\\
\vspace{3pt}2	&2	&4	&4	&4	&8	&12	&12	&8	&2	&2	&4	&6	&6	&3	&6	&6	&12	&18	&2\\
\vspace{3pt}2	&2	&4	&4	&4	&8	&12	&12	&8	&2	&2	&4	&6	&6	&3	&6	&7	&13	&20	&2\\
\vspace{3pt}4	&4	&8	&8	&8	&16	&24	&24	&16	&4	&4	&8	&12	&12	&6	&12	&13	&26	&39	&4\\
\vspace{3pt}6	&6	&12	&12	&12	&24	&36	&36	&24	&6	&6	&12	&18	&18	&9	&18	&20	&39	&60	&6\\
\vspace{3pt}1	&1	&2	&2	&2	&4	&6	&6	&4	&1	&1	&2	&3	&3	&1	&2	&2	&4	&6	&2
\end{smallmatrix}
\right]
$$
is the valuation matrix of $\fa$. Recall that by Proposition \ref{3d} any jumping number is supported 
at some vertex $\gamma$ with $\widehat d_\gamma>0$ or $v_{\Gamma}(\gamma)>2$. Therefore it is enough to consider 
the sets $\cH_{\gamma_j}^\fa$ for $j=1, 3, 7, 8, 9, 13, 14, 16, 19$ and $20$. By Theorem \ref{2} we know that
$$
\cH_{\gamma_1}^\fa=\left\{\frac{t}{31}\middle | t+(v_\Gamma(\gamma_1)-2)V_{\gamma_1,\gamma_1}-\sum_{\nu\sim\gamma_1}s^{\gamma_1}_\nu
\left\lceil t\sum_{i\in\Gamma_\nu^{\gamma_1}}\frac{\widehat d_i V_{\gamma_1,i}}{s^{\gamma_1}_\nu d_\mu}\right\rceil^{+}\in S^{\gamma_1}\right\}.
$$
Clearly, $v_\Gamma(\gamma_1)=4$ and $V_{\gamma_1,\gamma_1}=1$. Furthermore, we have $s^{\gamma_1}_\nu=1$ for every $\nu\sim\gamma_1$ so 
that $S^{\gamma_1}=\mathbb N$, and if we write $\Psi_{\gamma,\nu}:=\sum_{i\in\Gamma_\nu^{\gamma}}\widehat d_iV_{\gamma,i}$, we see that
$$
(\Psi_{\gamma_1,\nu})_{\nu=\gamma_3,\gamma_{10},\gamma_{16},\gamma_{20}}=\: \left(16,6,6,3\right).
$$
Subsequently,
$$
\cH_{\gamma_1}^\fa=\left\{\frac{t}{31}\middle | t+2-\left\lceil\frac{16\cdot t}{31}\right\rceil^{+}
-2\cdot\left\lceil\frac{6\cdot t}{31}\right\rceil^{+}-\left\lceil\frac{3\cdot t}{31}\right\rceil^{+}\ge0\right\}.
$$

In the case $j=3$ we see that $v_\Gamma(\gamma_3)=3$ and $V_{\gamma_3,\gamma_3}=6$. Moreover, 
$(s^{\gamma_3}_\nu)_{\nu=\gamma_1,\gamma_2,\gamma_9}=(2,3,6)$ so that $S^{\gamma_3}=\left\langle 2, 3\right\rangle$, and  
$$
(\Psi_{\gamma_3,\nu})_{\nu=\gamma_1,\gamma_2,\gamma_9}=(30,0,48).
$$
Since $\left\lceil 0\right\rceil^{+}=1$ we see that
\begin{align*}
\cH_{\gamma_3}^\fa
=&\left\{\frac{t}{78}\middle | t+6-2\cdot\left\lceil\frac{30\cdot t}{2\cdot78}\right\rceil^{+}-3\cdot\left\lceil\frac{0\cdot t}{3\cdot78}\right\rceil^{+}
-6\cdot\left\lceil\frac{48\cdot t}{6\cdot78}\right\rceil^{+}\in\mathbb N\smallsetminus\{1\}\right\}\\
=&\left\{\frac{t}{78}\middle | t+3-2\cdot\left\lceil\frac{30\cdot t}{2\cdot78}\right\rceil^{+}
-6\cdot\left\lceil\frac{48\cdot t}{6\cdot78}\right\rceil^{+}\in\mathbb N\smallsetminus\{1\}\right\}.
\end{align*}
Similarly,
\begin{align*}
\cH_{\gamma_7}^\fa=&\left\{\frac{t}{261}\middle | t+44-3\cdot\left\lceil\frac{129\cdot t}{3\cdot261}\right\rceil^{+}
	-66\cdot\left\lceil\frac{132\cdot t}{66\cdot261}\right\rceil^{+}\in \left\langle 3, 22\right\rangle\right\},\\
\cH_{\gamma_8}^\fa=&\left\{\frac{t}{263}\middle | t-67-\left\lceil\frac{129\cdot t}{263}\right\rceil^{+}\ge0\right\},\\
\cH_{\gamma_9}^\fa=&\left\{\frac{t}{164}\middle | t-2\cdot\left\lceil\frac{60\cdot t}{2\cdot164}\right\rceil^{+}
	-13\cdot\left\lceil\frac{78\cdot t}{13\cdot164}\right\rceil^{+}\in \left\langle 2,13\right\rangle\right\},\\
\cH_{\gamma_{13}}^\fa=&\left\{\frac{t}{117}\middle | t+14-3\cdot\left\lceil\frac{75\cdot t}{3\cdot117}\right\rceil^{+}
	-21\cdot\left\lceil\frac{42\cdot t}{21\cdot117}\right\rceil^{+}\in \left\langle 3, 7\right\rangle\right\},\\
\cH_{\gamma_{14}}^\fa=&\left\{\frac{t}{119}\middle | t-22-\left\lceil\frac{75\cdot t}{119}\right\rceil^{+}\ge0\right\},\\
\cH_{\gamma_{16}}^\fa=&\left\{\frac{t}{68}\middle | t+3-2\cdot\left\lceil\frac{50\cdot t}{2\cdot68}\right\rceil^{+}
	-6\cdot\left\lceil\frac{18\cdot t}{6\cdot68}\right\rceil^{+}\in \mathbb N\smallsetminus\{1\}\right\},\\
\cH_{\gamma_{19}}^\fa=&\left\{\frac{t}{210}\middle | t-20-3\cdot\left\lceil\frac{150\cdot t}{3\cdot210}\right\rceil^{+}
	\in\left\langle 3, 20\right\rangle\right\},
\end{align*}
and finally,
$$
\cH_{\gamma_{20}}^\fa=\left\{\frac{t}{34}\middle | t-2-\left\lceil\frac{28\cdot t}{34}\right\rceil^{+}\ge0\right\}.
$$
Thus we get 
\begin{align*}
\cH_{\gamma_1}^\fa =&\left\{\frac{t+10m}{31}+n\middle | t =3,4,5,7,8,9,10;\: m=0,1,2\right\}\cup\mathbb Z_{+},\\
\cH_{\gamma_3}^\fa =&\left\{\frac{5+10t+2m}{78}+n\middle | t,m,n\in\mathbb N;\:t<8;\: m<3-\frac{t}{4}\right\}\cup\mathbb Z_{+},\\
\cH_{\gamma_7}^\fa =&\left\{\frac{t+3m+129p}{261}+n\middle | t=46, 89; m,n\in\mathbb N; p=0, 1;\frac{t+3m+129p}{261}\le\frac{1+p}{2}\right\}\cup\mathbb Z_{+},\\
\cH_{\gamma_8}^\fa =&\left\{\frac{t+132}{263}\middle | t \in\mathbb N\right\},\\
\cH_{\gamma_9}^\fa =&\left\{\frac{19+21t+2m}{164}\middle | t,m\in\mathbb N\text{ and }\frac{3-t}{3}\le m\le4+\frac{16t}{5}\right\},\\
\cH_{\gamma_{13}}^\fa =&\left\{\frac{t+3m+57p}{117}+n\middle | t =22, 41;\,m,n\in\mathbb N;\,p=0, 1;\frac{t+3m+57p}{117}\le\frac{1+p}{2}\right\}\cup\mathbb Z_{+},\\
\cH_{\gamma_{14}}^\fa =&\left\{\frac{t+60}{119}\middle | t\in\mathbb N\right\},\\
\cH_{\gamma_{16}}^\fa =&\left\{\frac{t+2m}{68}+n\middle | t=11, 33, 55, 66; m=1, 2, 3, 4, 5, 6; n\in\mathbb N\text{ and }23\neq t+2m\le68\right\},\\
\cH_{\gamma_{19}}^\fa =&\left\{\frac{t+3m}{210}\middle | t = 71, 142, 210;\: m\in\mathbb N\right\},\\
\cH_{\gamma_{20}}^\fa=&\left\{\frac{t+12}{34}\middle | t\in\mathbb N\right\},
\end{align*}
and the set of jumping numbers of $\fa$ is
$$
\cH^\fa=\cH_{\gamma_{1}}^\fa\cup\cH_{\gamma_{3}}^\fa\cup\cH_{\gamma_{7}}^\fa\cup\cH_{\gamma_{8}}^\fa\cup\cH_{\gamma_{9}}^\fa
\cup\cH_{\gamma_{13}}^\fa\cup\cH_{\gamma_{14}}^\fa\cup\cH_{\gamma_{16}}^\fa\cup\cH_{\gamma_{19}}^\fa\cup\cH_{\gamma_{20}}^\fa.
$$
\end{exmp}


\begin{thebibliography}{99}

\bibitem{AM}
M. Alberich-Carrami\~nana, J. \`Alvarez Montaner, F. Dachs-Cadefau, 
\emph{Multiplier ideals in two-dimensional local rings with rational singularities}, 
Michigan Math. J. \textbf{65} (2016), no. 2, 287--320.

\bibitem{C}
A. Campillo, G. Gonzales-Sprinberg and M. Lejeune-Jalabert, \emph{Clusters of infinitely near points},
Math. Ann. \textbf{306} (1996), 169--194.

\bibitem{ELSV}
L. Ein, R. Lazarsfeld, K. Smith and D. Varolin, \emph{Jumping coefficients of multiplier ideals}, 
Duke Math. J. \textbf{123} (2004), no. 3, 469--506.

\bibitem{GM}
C. Galindo, Carlos, F. Monserrat, \emph{The Poincar\'e series of multiplier ideals of a simple complete ideal in a 
local ring of a smooth surface.}, Adv. Math.  \textbf{225}  (2010),  no. 2, 1046--1068.

\bibitem{GHM}
C. Galindo, F. Hernando, F. Monserrat,  \emph{The log-canonical threshold of a plane curve}, Math. Proc. Cambridge Philos. Soc. \textbf{160} (2016), no. 3, 513--535.

\bibitem{HJ}
E. Hyry and T. J\"arvilehto, \emph{Jumping numbers and ordered tree structures on the dual graph}, 
Manuscripta Math.  136  (2011),  no. 3-4, 411--437.

\bibitem{Jj}
T. J\"arvilehto, \emph{Jumping numbers of a simple complete ideal in a two-dimensional regular local ring},
PhD dissertation, University of Helsinki, 2007.

\bibitem{J}
T. J\"arvilehto, \emph{Jumping numbers of a simple complete ideal in a two-dimensional regular local ring},
Mem. Amer. Math. Soc.  214  (2011),  no. 1009, viii+78 pp. ISBN: 978-0-8218-4811-1.

\bibitem{K}
T. Kuwata, \emph{On log canonical thresholds of reducible plane curves}, Amer. J. Math. \textbf{121} (1999), no. 4, 
701--721.

\bibitem{L}
J. Lipman, \emph{Proximity inequalities for complete ideals 
in two-dimensional regular local rings}, Contemp. Math.  
\textbf{159} (1994), 293--306.

\bibitem{LW}
J. Lipman and K. Watanabe, \emph{Integrally closed ideals in two-dimensional regular local rings
are multiplier ideals}, Math. Res. Lett. \textbf{10} (2003), no. 4, 423--434. 

\bibitem{N}
D. Naie, \emph{Jumping numbers of a unibranch curve on a smooth surface}, Manuscripta Math.  
\textbf{128}  (2009),  no. 1, 33--49. 
  
\bibitem{ST}
K. Smith and H. Thompson, \emph{Irrelevant exceptional divisors for curves on a smooth surface},
Algebra, geometry and their interactions, Contemp. Math. \textbf{448} (2007), 245--254. 

\bibitem{S1}
M. Spivakovsky, \emph{Valuations in function fields of surfaces}, Amer. J. Math.  
\textbf{112} (1990), 107--156.

\bibitem{T0}
K. Tucker, \emph{Jumping numbers and multiplier ideals on algebraic surfaces}, PhD dissertation, University
of Michigan, 2010.

\bibitem{T}
K. Tucker, \emph{Jumping numbers on algebraic surfaces with rational singularities.}, Trans. Amer. Math. Soc. \textbf{362}
(2010),  no. 6, 3223--3241. 

\end{thebibliography}
\end{document}